\newtheorem{theorem}{Theorem}[section]
\newtheorem{corollary}[theorem]{Corollary}
\newtheorem{lemma}[theorem]{Lemma}
\newtheorem{proposition}[theorem]{Proposition}
\newtheorem{definition}[theorem]{Definition}
\newtheorem{remark}[theorem]{Remark}
\newtheorem{exercise}{Exercise}
\numberwithin{equation}{section}
\begin{document}
\title[Transversal Dirac Operators]{Transversal Dirac operators on
distributions, foliations, and $G$-manifolds\\
Lecture Notes}
\author[K.~Richardson]{Ken Richardson}
\email[K.~Richardson]{k.richardson@tcu.edu}
\subjclass[2000]{53C12; 53C21; 58J50; 58J60}
\keywords{Riemannian foliation, Dirac operator, transverse geometry, index}

\begin{abstract}
In these lectures, we investigate generalizations of the ordinary Dirac
operator to manifolds with additional structure. In particular, if the
manifold comes equipped with a distribution and an associated Clifford
algebra action on a bundle over the manifold, one may define a transversal
Dirac operator associated to this structure. We investigate the geometric
and analytic properties of these operators, and we apply the analysis to the
settings of Riemannian foliations and of manifolds endowed with Lie group
actions. Among other results, we show that although a bundle-like metric on
the manifold is needed to define the basic Dirac operator on a Riemannian
foliation, its spectrum depends only on the Riemannian foliation structure.
Using these ideas, we produce a type of basic cohomology that satisfies
Poincar\'{e} duality on transversally oriented Riemannian foliations. Also,
we show that there is an Atiyah-Singer type theorem for the equivariant
index of operators that are transversally elliptic with respect to a compact
Lie group action. This formula relies heavily on the stratification of the
manifold with group action and contains eta invariants and curvature forms.
These notes contain exercises at the end of each subsection and are meant to
be accessible to graduate students.
\end{abstract}

\maketitle

\section{Introduction to Ordinary Dirac Operators}

\subsection{The Laplacian}

The \textbf{Laplace operator} (or simply, \textbf{Laplacian}) is the famous
differential operator $\Delta $ on $\mathbb{R}^{n}$ defined by%
\begin{equation*}
\Delta h=-\sum_{j=1}^{n}\frac{\partial ^{2}h}{\partial x_{j}^{2}},h\in
C^{\infty }\left( \mathbb{R}^{n}\right)
\end{equation*}%
The solutions to the equation $\Delta h=0$ are the \textbf{harmonic functions%
}. This operator is present in both the heat equation and wave equations of
physics.%
\begin{eqnarray*}
\text{Heat equation}\text{: } &&\frac{\partial u\left( t,x\right) }{\partial
t}+\Delta _{x}u\left( t,x\right) =0 \\
\text{Wave equation} &\text{:}&\frac{\partial ^{2}u\left( t,x\right) }{%
\partial t^{2}}+\Delta _{x}u\left( t,x\right) =0
\end{eqnarray*}%
The sign of the Laplacian is chosen so that it is a nonnegative operator. If 
$\left\langle u,v\right\rangle $ denotes the $L^{2}$ inner product on
complex-valued functions on $\mathbb{R}^{n}$, by integrating by parts, we
see that 
\begin{eqnarray*}
\left\langle \Delta u,u\right\rangle &=&\int_{\mathbb{R}^{n}}\left( \Delta
u\right) \overline{u} \\
&=&\int_{\mathbb{R}^{n}}\left\vert \nabla u\right\vert ^{2}
\end{eqnarray*}%
if $u$ is compactly supported, where $\nabla u=\left( \frac{\partial u}{%
\partial x_{1}},...,\frac{\partial u}{\partial x_{n}}\right) $ is the
gradient vector. The calculation verifies the nonnegativity of $\Delta $.

The same result holds if instead the Laplace operator acts on the space of
smooth functions on a closed Riemannian manifold (compact, without
boundary); the differential operator is modified in a natural way to account
for the metric. That is, if the manifold is isometrically embedded in
Euclidean space, the Laplacian of a function on that manifold agrees with
the Euclidean Laplacian above if that function is extended to be constant in
the normal direction in a neighborhood of the embedded submanifold. One may
also define the Laplacian on differential forms in precisely the same way;
the Euclidean Laplacian on forms satisfies%
\begin{equation*}
\Delta \left[ u\left( x\right) dx_{i_{1}}\wedge dx_{i_{2}}\wedge ...\wedge
dx_{i_{p}}\right] =\left( \Delta u\right) \left( x\right) dx_{i_{1}}\wedge
dx_{i_{2}}\wedge ...\wedge dx_{i_{p}}.
\end{equation*}

These standard formulas for the Laplace operator suffice if the Riemannian
manifold is flat (for example, flat tori), but it is convenient to give a
coordinate-free description for this operator. If $\left( M,g\right) $ is a
smooth manifold with metric $g=\left( \bullet ,\bullet \right) $, the volume
form on $M$ satisfies $\mathrm{dvol}=\sqrt{\det g}dx$. The metric induces an
isomorphism $v_{p}\rightarrow v_{p}^{\flat }$ between vectors and one forms
at $p\in M$, given by%
\begin{equation*}
v_{p}^{\flat }\left( w_{p}\right) :=\left( v_{p},w_{p}\right) ,~w_{p}\in
T_{p}M.
\end{equation*}%
Thus, given an orthonormal basis $\left\{ e_{j}:1\leq j\leq n\right\} $of
the tangent space $T_{p}M$, we declare the corresponding dual basis $\left\{
e_{j}^{\flat }:1\leq j\leq n\right\} $ to be orthonormal, and in general we
declare $\left\{ e_{\alpha }^{\flat }=e_{\alpha _{1}}^{\flat }\wedge
...\wedge e_{\alpha _{r}}^{\flat }\right\} _{\left\vert \alpha \right\vert
=k}$ to be an orthonormal basis of $r$-forms at a point. Then the $L^{2}$
inner product of $r$-forms on $M$ is defined by%
\begin{equation*}
\left\langle \gamma ,\beta \right\rangle =\int_{M}\left( \gamma ,\beta
\right) ~\mathrm{dvol~}.
\end{equation*}

Next, if $d:\Omega ^{r}\left( M\right) \rightarrow \Omega ^{r+1}\left(
M\right) $ is the exterior derivative on smooth $r$-forms, we define $\delta
:\Omega ^{r+1}\left( M\right) \rightarrow \Omega ^{r}\left( M\right) $ to be
the formal adjoint of $d$ with respect to the $L^{2}$inner product. That is,
if $\omega \in \Omega ^{r+1}\left( M\right) $, we define $\delta \omega $ by
requiring 
\begin{equation*}
\left\langle \gamma ,\delta \omega \right\rangle =\left\langle d\gamma
,\omega \right\rangle ~
\end{equation*}%
for all $\gamma \in \Omega ^{r}\left( M\right) $. Then the \textbf{Laplacian
on differential }$r$\textbf{-forms on }$M$ is defined to be%
\begin{equation*}
\Delta =\delta d+d\delta :\Omega ^{r}\left( M\right) \rightarrow \Omega
^{r}\left( M\right) .
\end{equation*}%
It can be shown that $\Delta $ is an essentially self-adjoint operator. The
word \emph{essentially} means that the space of smooth forms needs to be
closed with respect to a certain Hilbert space norm, called a Sobolev norm.

We mention that in many applications, vector-valued Laplacians and
Laplacians on sections of vector bundles are used.

\begin{exercise}
Explicitly compute the formal adjoint $\delta $ for $d$ restricted to
compactly supported forms in Euclidean space, and verify that the $\delta
d+d\delta $ agrees with the Euclidean Laplacian on $r$-forms.
\end{exercise}

\begin{exercise}
Show that a smooth $r$-form $\alpha \in \Omega ^{r}\left( M\right) $ is
harmonic, meaning that $\Delta \alpha =0$, if and only if $d\alpha $ and $%
\delta \alpha $ are both zero.
\end{exercise}

\begin{exercise}
Explicitly compute the set of harmonic $r$-forms on the $2$-dimensional flat
torus $T^{2}=\mathbb{R}^{2}\diagup \mathbb{Z}^{2}$. Verify the \textbf{Hodge
Theorem} in this specific case; that is, show that the space of harmonic $r$%
-forms is isomorphic to the $r$ -dimensional de Rham cohomology group $%
H^{r}\left( M\right) $.
\end{exercise}

\begin{exercise}
Suppose that $\alpha $ is a representative of a cohomology class in $%
H^{r}\left( M\right) $. Show that $\alpha $ is a harmonic form if and only
if $\alpha $ is the element of the cohomology class with minimum $L^{2}$
norm.
\end{exercise}

\begin{exercise}
If $\left( g_{ij}\right) $ is the local matrix for the metric with $%
g_{ij}=\left( \frac{\partial }{\partial x_{i}},\frac{\partial }{\partial
x_{j}}\right) $, show that the matrix $\left( g^{ij}\right) $ defined by $%
g^{ij}=\left( dx_{i},dx_{j}\right) $ is the inverse of the matrix $\left(
g_{ij}\right) $.
\end{exercise}

\begin{exercise}
If $\alpha =\sum_{j=1}^{n}\alpha _{j}\left( x\right) dx_{j}$ is a one-form
on a Riemannian manifold of dimension $n$, where $g^{ij}=\left(
dx_{i},dx_{j}\right) $ is the local metric matrix for one-forms, verify that
the formal adjoint $\delta $ satisfies 
\begin{equation*}
\delta \left( \alpha \right) =-\frac{1}{\sqrt{g}}\sum_{i,j}\frac{\partial }{%
\partial x_{i}}\left( g^{ij}\sqrt{g}\alpha _{j}\right) .
\end{equation*}
\end{exercise}

\begin{exercise}
Show that if $f\in C^{\infty }\left( M\right) $, then 
\begin{equation*}
\int_{M}\Delta f~\mathrm{dvol}=0.
\end{equation*}
\end{exercise}

\subsection{The ordinary Dirac operator}

The original motivation for constructing a Dirac operator was the need of a
first-order differential operator whose square is the Laplacian. Dirac
needed such an operator in order to make some version of quantum mechanics
that is compatible with special relativity. Specifically, suppose that $%
D=\sum_{j=1}^{n}c_{j}\frac{\partial }{\partial x_{j}}$ is a first-order,
constant-coefficient differential operator on $\mathbb{R}^{n}$ such that $%
D^{2}$ is the ordinary Laplacian on $\mathbb{R}^{n}$. Then one is quickly
led to the equations%
\begin{eqnarray*}
c_{i}^{2} &=&-1,~ \\
c_{i}c_{j}+c_{j}c_{i} &=&0,~i\neq j
\end{eqnarray*}%
Clearly, this is impossible if we require each $c_{j}\in \mathbb{C}$.
However, if we allow matrix coefficients, we are able to find such matrices;
they are called \textbf{Clifford matrices}. In the particular case of $%
\mathbb{R}^{3}$, we may use the famous \textbf{Pauli spin matrices}%
\begin{equation*}
c_{1}=\left( 
\begin{array}{cc}
0 & i \\ 
i & 0%
\end{array}%
\right) ,~c_{2}=\left( 
\begin{array}{cc}
0 & 1 \\ 
-1 & 0%
\end{array}%
\right) ,~c_{3}=\left( 
\begin{array}{cc}
i & 0 \\ 
0 & -i%
\end{array}%
\right) .
\end{equation*}%
The vector space $\mathbb{C}^{k}$ on which the matrices and derivatives act
is called the vector space of spinors. It can be shown that the minimum
dimension $k$ satisfies $k=2^{\left\lfloor n/2\right\rfloor }$. The matrices
can be used to form an associated \textbf{Clifford multiplication of vectors}%
, written $c\left( v\right) $, defined by 
\begin{equation*}
c\left( v\right) =\sum_{j=1}^{n}v_{j}c_{j}~,
\end{equation*}%
where $v=\left( v_{1},...,v_{n}\right) $. Note that $c:\mathbb{R}%
^{n}\rightarrow \mathrm{End}\left( \mathbb{C}^{k}\right) $ such that 
\begin{equation*}
c\left( v\right) c\left( w\right) +c\left( w\right) c\left( v\right)
=-2\left( v,w\right) ,~v,w\in \mathbb{R}^{n}.
\end{equation*}

If $M$ is a closed Riemannian manifold, we desire to find a Hermitian vector
bundle $E\rightarrow M$ and a first-order differential operator $D:\Gamma
\left( E\right) \rightarrow \Gamma \left( E\right) $ on sections of $E$ such
that its square is a Laplacian plus a lower-order differential operator.
This implies that each $E_{x}$ is a $\mathbb{C}\mathrm{l}\left(
T_{x}M\right) $-module, where $\mathbb{C}\mathrm{l}\left( T_{x}M\right) $ is
the subalgebra of $\mathrm{End}_{\mathbb{C}}\left( E_{x}\right) $ generated
by a Clifford multiplication of tangent vectors. Then the Dirac operator
associated to the Clifford module $E$ is defined for a local orthonormal
frame $\left( e_{j}\right) _{j=1}^{n}$ of $TM$ to be%
\begin{equation*}
D=\sum_{j=1}^{n}c\left( e_{j}\right) \nabla _{e_{j}},
\end{equation*}%
where $c$ denotes Clifford multiplication and where $\nabla $ is a metric
connection on $E$ satisfying the compatibility condition%
\begin{equation*}
\nabla _{V}\left( c\left( W\right) s\right) =c\left( \nabla _{V}W\right)
s+c\left( W\right) \nabla _{V}s
\end{equation*}%
for all sections $s\in \Gamma \left( E\right) $ and vector fields $V,W\in
\Gamma \left( TM\right) $. We also require that Clifford multiplication of
vectors is skew-adjoint with respect to the $L^{2}$ inner product, meaning
that 
\begin{equation*}
\left\langle c\left( v\right) s_{1},s_{2}\right\rangle =-\left\langle
s_{1},c\left( v\right) s_{2}\right\rangle
\end{equation*}%
for all $v\in \Gamma \left( TM\right) $, $s_{1},s_{2}\in \Gamma \left(
E\right) $. It can be shown that the expression for $D$ above is independent
of the choice of orthonormal frame of $TM$. In the case where $E$ has the
minimum possible rank $k=2^{\left\lfloor n/2\right\rfloor }$, we call $E$ a 
\textbf{complex spinor bundle} and $D$ a \textbf{spin}$^{c}$\textbf{\ Dirac
operator}. If such a bundle exists over a smooth manifold $M$, we say that $%
M $ is spin$^{c}$. There is a mild topological obstruction to the existence
of such a structure; the third integral Stiefel-Whitney class of $TM$ must
vanish.

Often the bundle $E$ comes equipped with a grading $E=E^{+}\oplus E^{-}$
such that $D$ maps $\Gamma \left( E^{+}\right) $ to $\Gamma \left(
E^{-}\right) $ and vice-versa. In these cases, we often restrict our
attention to $D:\Gamma \left( E^{+}\right) \rightarrow \Gamma \left(
E^{-}\right) $

Examples of ordinary Dirac operators are as follows:

\begin{itemize}
\item The \textbf{de Rham operator} is defined to be%
\begin{equation*}
d+\delta :\Omega ^{\text{\textrm{even}}}\left( M\right) \rightarrow \Omega ^{%
\mathrm{odd}}\left( M\right)
\end{equation*}%
from even forms to odd forms. In this case, the Clifford multiplication is
given by $c\left( v\right) =v^{\flat }\wedge -i\left( v\right) $, where $%
v\in T_{x}M$ and $i\left( v\right) $ denotes interior product, and $\nabla $
is the ordinary Levi-Civita connection extended to forms.

\item If $M$ is even-dimensional, the \textbf{signature operator} is defined
to be%
\begin{equation*}
d+\delta :\Omega ^{+}\left( M\right) \rightarrow \Omega ^{-}\left( M\right)
\end{equation*}%
from \textbf{self-dual} to \textbf{anti-self-dual} forms. This grading is
defined as follows. Let $\ast $ denote the \textbf{Hodge star operator} on
forms, defined as the unique endomorphism of the bundle of forms such that $%
\ast :\Omega ^{r}\left( M\right) \rightarrow \Omega ^{n-r}\left( M\right) $
and 
\begin{equation*}
\alpha \wedge \ast \beta =\left( \alpha ,\beta \right) \mathrm{dvol},~\alpha
,\beta \in \Omega ^{r}\left( M\right) .
\end{equation*}%
Then observe that the operator%
\begin{equation*}
\bigstar =i^{r\left( r-1\right) +\frac{n}{2}}\ast :\Omega ^{r}\left(
M\right) \rightarrow \Omega ^{n-r}\left( M\right)
\end{equation*}%
satisfies $\bigstar ^{2}=1$. Then it can be shown that $d+\delta $
anticommutes with $\bigstar $ and thus maps the $+1$ eigenspace of $\bigstar 
$, denoted $\Omega ^{+}\left( M\right) $, to the $-1$ eigenspace of $%
\bigstar $, denoted $\Omega ^{-}\left( M\right) $. Even though the bundles
have changed from the previous example, the expression for Clifford
multiplication is the same.

\item If $M$ is complex, then the \textbf{Dolbeault operator} is defined to
be%
\begin{equation*}
\overline{\partial }+\overline{\partial }^{\ast }:\Omega ^{0,\text{\textrm{%
even}}}\left( M\right) \rightarrow \Omega ^{0,\mathrm{odd}}\left( M\right) ,
\end{equation*}%
where the differential forms involve wedge products of $d\overline{z_{j}}$
and the differential $\overline{\partial }$ differentiates only with respect
to the $\overline{z_{j}}$ variables.

\item The \textbf{spin}$^{c}$\textbf{\ Dirac operator} has already been
mentioned above. The key point is that the vector bundle $S\rightarrow M$ in
this case has the minimum possible dimension. When $M$ is even-dimensional,
then the spinor bundle decomposes as $S^{+}\oplus S^{-}$, and $D:\Gamma
\left( S^{+}\right) \rightarrow \Gamma \left( S^{-}\right) $. The spinor
bundle $S$ is unique up to tensoring with a complex line bundle.
\end{itemize}

For more information on Dirac operators, spin manifolds, and Clifford
algebras, we refer the reader to \cite{LM} and \cite{Roe}. Often the
operators described above are called \emph{Dirac-type operators}, with the
word \textquotedblleft Dirac operator\textquotedblright\ reserved for the
special examples of the spin or spin$^{c}$ Dirac operator. Elements of the
kernel of a spin or spin$^{c}$ Dirac operator are called \textbf{harmonic
spinors}.

\begin{exercise}
Let the Dirac operator $D$ on the two-dimensional torus $T^{2}=\mathbb{R}%
^{2}\diagup \mathbb{Z}^{2}$ be defined using $c_{1}$ and $c_{2}$ of the
Pauli spin matrices. Find a decomposition of the bundle as $S^{+}\oplus
S^{-} $, and calculate $\ker \left( \left. D\right\vert _{S^{+}}\right) $
and $\ker \left( \left. D\right\vert _{S^{-}}\right) $. Find all the
eigenvalues and corresponding eigensections of $D^{+}=\left. D\right\vert
_{S^{+}}$.
\end{exercise}

\begin{exercise}
On an $n$-dimensional manifold $M$, show that $\ast ^{2}=\left( -1\right)
^{r\left( n-r\right) }$ and $\bigstar ^{2}=1$ when restricted to $r$-forms.
\end{exercise}

\begin{exercise}
On $\mathbb{R}^{4}$ with metric $ds^{2}=dx_{1}^{2}+4dx_{2}^{2}+dx_{3}^{2}+%
\left( 1+\exp \left( x_{1}\right) \right) ^{2}dx_{4}^{2}$, let $\omega
=x_{1}^{2}x_{2}dx_{2}\wedge dx_{4}$. Find $\ast \omega $ and $\bigstar
\omega $.
\end{exercise}

\begin{exercise}
Calculate the signature operator on $T^{2}$, and identify the subspaces $%
\Omega ^{+}\left( T^{2}\right) $ and $\Omega ^{-}\left( T^{2}\right) $.
\end{exercise}

\begin{exercise}
Show that $-i\frac{\partial }{\partial \theta }$ is a Dirac operator on $%
S^{1}=\left\{ e^{i\theta }:\theta \in \mathbb{R}\right\} $ . Find all the
eigenvalues and eigenfunctions of this operator.
\end{exercise}

\begin{exercise}
Show that if $S$ and $T$ are two anticommuting linear transformations from a
vector space $V$ to itself, then if $E_{\lambda }$ is the eigenspace of $S$
corresponding to an eigenvalue $\lambda $, then $TE_{\lambda }$ is the
eigenspace of $S$ corresponding to the eigenvalue $-\lambda $.
\end{exercise}

\begin{exercise}
Show that if $\delta ^{r}$ is the adjoint of $d:\Omega ^{r-1}\left( M\right)
\rightarrow \Omega ^{r}\left( M\right) $, then%
\begin{equation*}
\delta ^{r}=\left( -1\right) ^{nr+n+1}\ast d\ast
\end{equation*}%
on $\Omega ^{r}\left( M\right) $.
\end{exercise}

\begin{exercise}
Show that if the dimension of $M$ is even and $\delta ^{r}$ is the adjoint
of $d:\Omega ^{r-1}\left( M\right) \rightarrow \Omega ^{r}\left( M\right) $,
then%
\begin{equation*}
\delta ^{r}=-\bigstar d\bigstar
\end{equation*}%
on $\Omega ^{r}\left( M\right) $. Is this true if the dimension is odd?
\end{exercise}

\begin{exercise}
Show that $d+\delta $ maps $\Omega ^{+}\left( M\right) $ to $\Omega
^{-}\left( M\right) $.
\end{exercise}

\begin{exercise}
Show that if we write the Dirac operator for $\mathbb{R}^{3}$ 
\begin{equation*}
D=c_{1}\frac{\partial }{\partial x_{1}}+c_{2}\frac{\partial }{\partial x_{2}}%
+c_{3}\frac{\partial }{\partial x_{3}}
\end{equation*}%
using the Pauli spin matrices in geodesic polar coordinates%
\begin{equation*}
D=Z\left( \frac{\partial }{\partial r}+D^{S}\right) ,
\end{equation*}%
then $ZD^{S}$ restricts to a spin$^{c}$ Dirac operator on the unit sphere $%
S^{2}$, and $Z$ is Clifford multiplication by the vector $\frac{\partial }{%
\partial r}$.
\end{exercise}

\begin{exercise}
Show that $d+\delta =\sum_{j=1}^{n}c\left( e_{j}\right) \nabla _{e_{j}}$,
with the definition of Clifford multiplication given in the notes.
\end{exercise}

\begin{exercise}
Show that the expression $\sum_{j=1}^{n}c\left( e_{j}\right) \nabla _{e_{j}}$
for the Dirac operator is independent of the choice of orthonormal frame.
\end{exercise}

\subsection{\protect\vspace{1pt}Properties of Dirac operators}

Here we describe some very important properties of Dirac operators.

First, Dirac operators are \emph{elliptic}. Both the Laplacian and Dirac
operators are examples of such operators. Very roughly, the word \emph{%
elliptic} means that the operators differentiate in all possible directions.
To state more precisely what this means, we need to discuss what is called
the \emph{principal symbol} of a differential (or pseudodifferential)
operator.

Very roughly, the principal symbol is the set of matrix-valued leading order
coefficients of the operator. If $E\rightarrow M$, $F\rightarrow M$ are two
vector bundles and $P:\Gamma \left( E\right) \rightarrow \Gamma \left(
F\right) $ is a differential operator of order $k$ acting on sections, then
in local coordinates of a local trivialization of the vector bundles, we may
write 
\begin{equation*}
P=\sum_{\left\vert \alpha \right\vert =k}s_{\alpha }\left( x\right) \frac{%
\partial ^{k}}{\partial x^{\alpha }}+\text{lower order terms},
\end{equation*}%
where the sum is over all possible multi-indices $\alpha =\left( \alpha
_{1},...,\alpha _{k}\right) $ of length $\left\vert \alpha \right\vert =k$,
and each $s_{\alpha }\left( x\right) \in \mathrm{\mathrm{Hom}}\left(
E_{x},F_{x}\right) $ is a linear transformation. If $\xi =\sum \xi
_{j}dx_{j}\in T_{x}^{\ast }M$ is a nonzero covector at $x$, we define the 
\textbf{principal symbol} $\sigma \left( P\right) \left( \xi \right) $ of $P$
at $\xi $ to be%
\begin{equation*}
\sigma \left( P\right) \left( \xi \right) =i^{k}\sum_{\left\vert \alpha
\right\vert =k}s_{\alpha }\left( x\right) \xi ^{\alpha }~\in \mathrm{\mathrm{%
Hom}}\left( E_{x},F_{x}\right) ,
\end{equation*}%
with $\xi ^{\alpha }=\xi _{\alpha _{1}}\xi _{\alpha _{2}}...\xi _{\alpha
_{k}}$ (some people leave out the $i^{k}$). It turns out that by defining it
this way, it is invariant under coordinate transformations. One
coordinate-free definition of $\sigma \left( P\right) _{x}:T_{x}^{\ast
}\left( M\right) \rightarrow \mathrm{\mathrm{Hom}}\left( E_{x},F_{x}\right) $
is as follows. For any $\xi \in T_{x}^{\ast }\left( M\right) $, choose a
locally-defined function $f$ such that $df_{x}=\xi $. Then we define the
operator%
\begin{equation*}
\sigma _{m}\left( P\right) \left( \xi \right) =\lim_{t\rightarrow \infty }%
\frac{1}{t^{m}}\left( e^{-itf}Pe^{itf}\right) ,
\end{equation*}%
where $\left( e^{-itf}Pe^{itf}\right) \left( u\right) =e^{-itf}\left(
P\left( e^{itf}u\right) \right) $. Then the order $k$ of the operator and
principal symbol are defined to be%
\begin{eqnarray*}
k &=&\sup \left\{ m:\sigma _{m}\left( P\right) \left( \xi \right) <\infty
\right\} \\
\sigma \left( P\right) \left( \xi \right) &=&\sigma _{k}\left( P\right)
\left( \xi \right) ~.
\end{eqnarray*}%
With this definition, the principal symbol of any differential (or even
pseudodifferential) operator can be found. Pseudodifferential operators are
more general operators that can be defined locally using the Fourier
transform and include such operators as the square root of the Laplacian.

An \textbf{elliptic differential (or pseudodifferential) operator} $P$ on $M$
is defined to be an operator such that its principal symbol $\sigma \left(
P\right) \left( \xi \right) $ is invertible for all nonzero $\xi \in T^{\ast
}M$.

From the exercises at the end of this section, we see that the symbol of any
Dirac operator $D=\sum c\left( e_{j}\right) \nabla _{e_{j}}$ is%
\begin{equation*}
\sigma \left( D\right) \left( \xi \right) =ic\left( \xi ^{\#}\right) \text{,}
\end{equation*}%
and the symbol of the associated Laplacian $D^{2}$ is%
\begin{equation*}
\sigma \left( D^{2}\right) =\left( ic\left( \xi ^{\#}\right) \right)
^{2}=\left\Vert \xi \right\Vert ^{2},
\end{equation*}%
which is clearly invertible for $\xi \neq 0$. Therefore both $D$ and $D^{2}$
are elliptic.

We say that an operator $P$ is \textbf{strongly elliptic} if there exists $%
c>0$ such that 
\begin{equation*}
\sigma \left( D\right) \left( \xi \right) \geq c\left\vert \xi \right\vert
^{2}
\end{equation*}%
for all nonzero $\xi \in T^{\ast }M$. The Laplacian and $D^{2}$ are strongly
elliptic.

Following are important properties of elliptic operators $P$, which now
apply to Dirac operators and their associated Laplacians:

\begin{itemize}
\item \textbf{Elliptic regularity}: if the coefficients of $P$ are smooth,
then if $Pu$ is smooth, then $u$ is smooth. As a consequence, if the order
of $P$ is greater than zero, then the kernel and all other eigenspaces of $P$
consist of smooth sections.

\item Elliptic operators are Fredholm when the correct Sobolev spaces of
sections are used.

\item Ellipticity implies that the spectrum of $P$ consists of eigenvalues.
Strong ellipticity implies that the spectrum is discrete and has the only
limit point at infinity. In particular, the eigenspaces are
finite-dimensional and consist of smooth sections. This now applies to any
Dirac operator, because its square is strongly elliptic.

\item If $P$ is a second order elliptic differential operator with no zero$^{%
\mathrm{th}}$ order terms, strong ellipticity implies the maximum principle
for the operator $P$.

\item Many inequalities for elliptic operators follow, like G\.{a}rding's
inequality, the elliptic estimates, etc.
\end{itemize}

See \cite{Wells}, \cite{Roe}, and \cite{Shu} for more information on
elliptic differential and pseudodifferential operators on manifolds.

Next, any Dirac operator $D:\Gamma \left( E\right) \rightarrow \Gamma \left(
E\right) $ is \textbf{formally self-adjoint}, meaning that when restricted
to smooth compactly-supported sections $u,v\in \Gamma \left( E\right) $ it
satisfies%
\begin{equation*}
\left\langle Du,v\right\rangle =\left\langle u,Dv\right\rangle .
\end{equation*}%
Since $D$ is elliptic and if $M$ is closed, it then follows that $D$ is
essentially self-adjoint, meaning that there is a Hilbert space $H^{1}\left(
E\right) $ such that $\Gamma \left( E\right) \subset H^{1}\left( E\right)
\subset L^{2}\left( E\right) $ such that the closure of $D$ in $H^{1}\left(
E\right) $ is a truly self-adjoint operator defined on the whole space. In
this particular case, $H^{1}\left( E\right) $ is an example of a Sobolev
space, which is the closure of $\Gamma \left( E\right) $ with respect to the
norm $\left\Vert u\right\Vert _{1}=\left\Vert u\right\Vert +\left\Vert
Du\right\Vert $, where $\left\Vert \bullet \right\Vert $ denotes the
ordinary $L^{2}$-norm.

We now show the proof that $D$ is formally self-adjoint. If the local bundle
inner product on $E$ is $\left( \bullet ,\bullet \right) $, we have%
\begin{eqnarray*}
\left( Du,v\right) &=&\sum \left( c\left( e_{j}\right) \nabla
_{e_{j}}u,v\right) =\sum -\left( \nabla _{e_{j}}u,c\left( e_{j}\right)
v\right) \\
&=&\sum -e_{j}\left( u,c\left( e_{j}\right) v\right) +\left( u,\nabla
_{e_{j}}\left( c\left( e_{j}\right) v\right) \right) ,
\end{eqnarray*}%
since $c\left( e_{j}\right) $ is skew-adjoint and $\nabla $ is a metric
connection. Using the compatibility of the connection, we have%
\begin{equation*}
\left( Du,v\right) =\sum -e_{j}\left( u,c\left( e_{j}\right) v\right)
+\left( u,c\left( \nabla _{e_{j}}e_{j}\right) v\right) +\left( u,c\left(
e_{j}\right) \nabla _{e_{j}}v\right) .
\end{equation*}%
Next, we use the fact that we are allowed to choose the local orthonormal
frame in any way we wish. If we are evaluating this local inner product at a
point $x\in M$, we choose the orthonormal frame $\left( e_{i}\right) $ so
that all covariant derivatives of $e_{i}$ vanish at $x$. Now, the middle
term above vanishes, and%
\begin{equation*}
\left( Du,v\right) =\left( u,Dv\right) +\sum -e_{j}\left( u,c\left(
e_{j}\right) v\right) .
\end{equation*}%
Next, if $\omega $ denotes the one-form defined by $\omega \left( X\right)
=\left( u,c\left( X\right) v\right) $ for $X\in \Gamma \left( TM\right) $,
then an exercise at the end of this section implies that 
\begin{equation*}
\left( \delta \omega \right) \left( x\right) =\left( \sum -e_{j}\left(
u,c\left( e_{j}\right) v\right) \right) \left( x\right) ,
\end{equation*}%
with our choice of orthonormal frame. Hence, 
\begin{equation*}
\left( Du,v\right) =\left( u,Dv\right) +\delta \omega ,
\end{equation*}%
which is a general formula now valid at all points of $M$. After integrating
over $M$ we have%
\begin{eqnarray*}
\left\langle Du,v\right\rangle &=&\left\langle u,Dv\right\rangle
+\int_{M}\delta \omega ~\mathrm{dvol} \\
&=&\left\langle u,Dv\right\rangle +\int_{M}\left( d\left( 1\right) ,\omega
\right) ~\mathrm{dvol} \\
&=&\left\langle u,Dv\right\rangle .
\end{eqnarray*}%
Thus, $D$ is formally self-adjoint.

\begin{exercise}
Find the principal symbol of the wave operator $\frac{\partial ^{2}}{%
\partial t^{2}}-\frac{\partial ^{2}}{\partial x^{2}}$ on $\mathbb{R}^{2}$,
and determine if it is elliptic.
\end{exercise}

\begin{exercise}
If $P_{1}$ and $P_{2}$ are two differential operators such that the
composition $P_{1}P_{2}$ is defined, show that 
\begin{equation*}
\sigma \left( P_{1}P_{2}\right) \left( \xi \right) =\sigma \left(
P_{1}\right) \left( \xi \right) \sigma \left( P_{2}\right) \left( \xi
\right) .
\end{equation*}
\end{exercise}

\begin{exercise}
Prove that if $D=\sum c\left( e_{j}\right) \nabla _{e_{j}}$ is a Dirac
operator, then%
\begin{equation*}
\sigma \left( D\right) \left( \xi \right) =ic\left( \xi ^{\#}\right) \text{
and }\sigma \left( D^{2}\right) \left( \xi \right) =\left\Vert \xi
\right\Vert ^{2}
\end{equation*}%
for all $\xi \in T^{\ast }M$. Use the coordinate-free definition.
\end{exercise}

\begin{exercise}
Show that if $\omega $ is a one-form on $M$, then 
\begin{equation*}
\left( \delta \omega \right) \left( x\right) =-\left(
\sum_{j=1}^{n}e_{j}\left( \omega \left( e_{j}\right) \right) \right) \left(
x\right) ,
\end{equation*}%
if $\left( e_{1},...,e_{n}\right) $ is a local orthonormal frame of $TM$
chosen so that 
\begin{equation*}
\left( \nabla _{e_{j}}e_{k}\right) \left( x\right) =0
\end{equation*}%
at $x\in M$, for every $j,k\in \left\{ 1,...,n\right\} $.
\end{exercise}

\subsection{The Atiyah-Singer Index Theorem}

Given Banach spaces $S$ and $T$, a bounded linear operator $L:S\rightarrow T$
is called \textbf{Fredholm} if its range is closed and its kernel and
cokernel $T\diagup L\left( S\right) $ are finite dimensional. The \textbf{%
index} of such an operator is defined to be 
\begin{equation*}
\mathrm{ind}\left( L\right) =\dim \ker \left( L\right) -\dim \text{\textrm{%
coker}}\left( L\right) ,
\end{equation*}%
and this index is constant on continuous families of such $L$. In the case
where $S$ and $T$ are Hilbert spaces, this is the same as 
\begin{equation*}
\mathrm{ind}\left( L\right) =\dim \ker \left( L\right) -\dim \mathrm{\ker }%
\left( L^{\ast }\right) .
\end{equation*}%
The index determines the connected component of $L$ in the space of Fredholm
operators. We will be specifically interested in this integer in the case
where $L$ is a Dirac operator.

For the case of the de Rham operator, we have 
\begin{equation*}
\ker \left( d+\delta \right) =\ker \left( d+\delta \right) ^{2}=\ker \Delta ,
\end{equation*}%
so that 
\begin{equation*}
\mathcal{H}^{r}\left( M\right) =\ker \left( \left. \left( d+\delta \right)
\right\vert _{\Omega ^{r}}\right)
\end{equation*}%
is the space of harmonic forms of degree $r$, which by the Hodge theorem is
isomorphic to $H^{r}\left( M\right) $, the $r^{\mathrm{th}}$ de Rham
cohomology group. Therefore, index 
\begin{eqnarray*}
\mathrm{ind}\left( \left. \left( d+\delta \right) \right\vert _{\Omega ^{%
\mathrm{even}}}\right) &:&=\dim \ker \left( \left. \left( d+\delta \right)
\right\vert _{\Omega ^{\mathrm{even}}}\right) -\dim \ker \left( \left.
\left( d+\delta \right) \right\vert _{\Omega ^{\mathrm{even}}}\right) ^{\ast
} \\
&=&\dim \ker \left( \left. \left( d+\delta \right) \right\vert _{\Omega ^{%
\mathrm{even}}}\right) -\dim \ker \left( \left. \left( d+\delta \right)
\right\vert _{\Omega ^{\mathrm{odd}}}\right) \\
&=&\chi \left( M\right) ,
\end{eqnarray*}%
the Euler characteristic of $M$.

In general, suppose that $D$ is an elliptic operator of order $m$ on
sections of a vector bundle $E^{\pm }$ over a smooth, compact manifold $M$.
Let $H^{s}\left( \Gamma \left( M,E^{\pm }\right) \right) $ denote the
Sobolev $s$-norm completion of the space of sections $\Gamma \left(
M,E\right) $, with respect to a chosen metric. Then $D$ can be extended to
be a bounded linear operator $\overline{D_{s}}:H^{s}\left( \Gamma \left(
M,E^{+}\right) \right) \rightarrow H^{s-m}\left( \Gamma \left(
M,E^{-}\right) \right) $ that is Fredholm, and $\mathrm{ind}\left( D\right)
:=\mathrm{ind}\left( \overline{D_{s}}\right) $ is well-defined and
independent of $s$. In the 1960s, the researchers M. F. Atiyah and I. Singer
proved that the index of an elliptic operator on sections of a vector bundle
over a smooth manifold satisfies the following formula (\cite{AS0}, \cite%
{AS1}):%
\begin{eqnarray*}
\mathrm{ind}\left( D\right) &=&\int_{M}\mathrm{ch}\left( \sigma \left(
D\right) \right) \wedge \mathrm{Todd}\left( T_{\mathbb{C}}M\right) \\
&=&\int_{M}\alpha \left( x\right) ~\mathrm{dvol}\left( x\right) ,
\end{eqnarray*}%
where $\mathrm{ch}\left( \sigma \left( D\right) \right) $ is a form
representing the Chern character of the principal symbol $\sigma \left(
D\right) $, and $\mathrm{Todd}\left( T_{\mathbb{C}}M\right) $ is a form
representing the Todd class of the complexified tangent bundle $T_{\mathbb{C}%
}M$; these forms are characteristic forms derived from the theory of
characteristic classes and depend on geometric and topological data. The
local expression for the relevant term of the integrand, which is a multiple
of the volume form $\mathrm{dvol}\left( x\right) $, can be written in terms
of curvature and the principal symbol and is denoted $\alpha \left( x\right)
~\mathrm{dvol}\left( x\right) $.

Typical examples of this theorem are some classic theorems in global
analysis. As in the earlier example, let $D=d+\delta $ from the space of
even forms to the space of odd forms on the manifold $M$ of dimension $n$,
where as before $\delta $ denotes the $L^{2}$-adjoint of the exterior
derivative $d$. Then the elements of $\ker \left( d+\delta \right) $ are the
even harmonic forms, and the elements of the cokernel can be identified with
odd harmonic forms. Moreover,%
\begin{eqnarray*}
\mathrm{ind}\left( d+\delta \right) &=&\dim H^{\mathrm{even}}\left( M\right)
-\dim H^{\mathrm{odd}}\left( M\right) \\
&=&\chi \left( M\right) ,\text{ and} \\
\int_{M}\mathrm{ch}\left( \sigma \left( d+\delta \right) \right) \wedge 
\mathrm{Todd}\left( T_{\mathbb{C}}M\right) &=&\frac{1}{\left( 2\pi \right)
^{n}}\int_{M}\mathrm{Pf~},
\end{eqnarray*}%
where $\mathrm{Pf}$ is the Pfaffian, which is, suitably interpreted, a
characteristic form obtained using the square root of the determinant of the
curvature matrix. In the case of $2$-manifolds ($n=2$), $\mathrm{Pf}$ is the
Gauss curvature times the area form. Thus, in this case the Atiyah-Singer
Index Theorem yields the generalized Gauss-Bonnet Theorem.

Another example is the operator $D=d+d^{\ast }$ on forms on an
even-dimensional manifold, this time mapping the self-dual to the
anti-self-dual forms. This time the Atiyah-Singer Index Theorem yields the
equation (called the Hirzebruch Signature Theorem)%
\begin{equation*}
\mathrm{Sign}\left( M\right) =\int_{M}L,
\end{equation*}%
where Sign$\left( M\right) $ is signature of the manifold, and $L$ is the
Hirzebruch $L$-polynomial applied to the Pontryagin forms.

If a manifold is spin, then the index of the spin Dirac operator is the $%
\widehat{A}$ genus (\textquotedblleft $A$-roof\textquotedblright\ genus) of
the manifold. Note that the spin Dirac operator is an example of a spin$^{c}$
Dirac operator where the spinor bundle is associated to a principal $\mathrm{%
Spin}\left( n\right) $ bundle. Such a structure exists when the second
Stiefel-Whitney class is zero, a stronger condition than the spin$^{c}$
condition. The $\widehat{A}$ genus is normally a rational number but must
agree with the index when the manifold is spin.

Different examples of operators yield other classical theorems, such as the
Hirzebruch-Riemann-Roch Theorem, which uses the Dolbeault operator.

All of the first order differential operators mentioned above are examples
of Dirac operators. If $M$ is spin$^{c}$, then the Atiyah-Singer Index
Theorem reduces to a calculation of the index of Dirac operators (twisted by
a bundle). Because of this and the Thom isomorphism in $K$-theory, the Dirac
operators and their symbols play a very important role in proofs of the
Atiyah-Singer Index Theorem. For more information, see \cite{AS1}, \cite{LM}.

\begin{exercise}
Prove that if $L:\mathcal{H}_{1}\rightarrow \mathcal{H}_{2}$ is a Fredholm
operator between Hilbert spaces, then \textrm{coker}$\left( L\right) \cong
\ker \left( L^{\ast }\right) $.
\end{exercise}

\begin{exercise}
Suppose that $P:\mathcal{H}\rightarrow \mathcal{H}$ is a self-adjoint linear
operator, and $\mathcal{H}=\mathcal{H}^{+}\oplus \mathcal{H}^{-}$ is an
orthogonal decomposition. If $P$ maps $\mathcal{H}^{+}$ into $\mathcal{H}%
^{-} $ and vice versa, prove that the adjoint of the restriction $P:\mathcal{%
H}^{+}\rightarrow \mathcal{H}^{-}$ is the restriction of $P$ to $\mathcal{H}%
^{-}$. Also, find the adjoint of the operator $P^{\prime }:\mathcal{H}%
^{+}\rightarrow \mathcal{H}$ defined by $P^{\prime }\left( h\right) =P\left(
h\right) $.
\end{exercise}

\begin{exercise}
If $D$ is an elliptic operator and $E_{\lambda }$ is an eigenspace of $%
D^{\ast }D$ corresponding to the eigenvalue $\lambda \neq 0$, then $D\left(
E_{\lambda }\right) $ is the eigenspace of $DD^{\ast }$ corresponding to the
eigenvalue $\lambda $. Conclude that the eigenspaces of $D^{\ast }D$ and $%
DD^{\ast }$ corresponding to nonzero eigenvalues have the same (finite)
dimension.
\end{exercise}

\begin{exercise}
Let $f:\mathbb{C}\rightarrow \mathbb{C}$ be a smooth function, and let $L:%
\mathcal{H}\rightarrow \mathcal{H}$ be a self-adjoint operator with discrete
spectrum. Let $P_{\lambda }:\mathcal{H}\rightarrow E_{\lambda }$ be the
orthogonal projection to the eigenspace corresponding to the eigenvalue $%
\lambda $. We define the operator $f\left( L\right) $ to be%
\begin{equation*}
f\left( L\right) =\sum_{\lambda }f\left( \lambda \right) P_{\lambda },
\end{equation*}%
assuming the right hand side converges. Assuming $f\left( L\right) $, $%
g\left( L\right) $, and $f\left( L\right) g\left( L\right) $ converge, prove
that $f\left( L\right) g\left( L\right) =g\left( L\right) f\left( L\right) $%
. Also, find the conditions on a function $f$ such that $f\left( L\right) =L$%
.
\end{exercise}

\begin{exercise}
Show that if $P$ is a self-adjoint Fredholm operator, then%
\begin{equation*}
\mathrm{ind}\left( D\right) =\mathrm{tr}\left( \exp \left( -tD^{\ast
}D\right) \right) -\mathrm{tr}\left( \exp \left( -tDD^{\ast }\right) \right)
\end{equation*}%
for all $t>0$, assuming that $\exp \left( -tD^{\ast }D\right) $, $\exp
\left( -tDD^{\ast }\right) $, and their traces converge.
\end{exercise}

\begin{exercise}
Find all homeomorphism types of surfaces $S$ such that a metric $g$ on $S$
has Gauss curvature $K_{g}$ that satisfies $-5\leq K_{g}\leq 0$ and volume
that satisfies $1\leq $\textrm{Vol}$_{g}\left( S\right) \leq 4$.
\end{exercise}

\begin{exercise}
Find an example of a smooth closed manifold $M$ such that every possible
metric on $M$ must have nonzero $L$ (the Hirzebruch $L$-polynomial applied
to the Pontryagin forms).
\end{exercise}

\begin{exercise}
Suppose that on a certain manifold the $\widehat{A}$-genus is $\frac{3}{4}$.
What does this imply about Stiefel-Whitney classes?
\end{exercise}

\section{Transversal Dirac operators on distributions\label%
{transvDiracDistrSection}}

This section contains some of the results in \cite{PrRi}, joint work with I.
Prokhorenkov.

The main point of this section is to provide some ways to analyze operators
that are not elliptic but behave in some ways like elliptic operators on
sections that behave nicely with respect to a designated \emph{transverse
subbundle} $Q\subseteq TM$. A \textbf{transversally elliptic differential
(or pseudodifferential) operator} $P$ on $M$ with respect to the transverse
distribution $Q\subseteq TM$ is defined to be an operator such that its
principal symbol $\sigma \left( P\right) \left( \xi \right) $ is required to
be invertible only for all nonzero $\xi \in Q^{\ast }\subseteq T^{\ast }M$.
In later sections, we will be looking at operators that are transversally
elliptic with respect to the orbits of a group action, and in this case $Q$
is the normal bundle to the orbits, which may have different dimensions at
different points of the manifold. In this section, we will restrict to the
case where $Q$ has constant rank.

Now, let $Q\subset TM$ be a smooth distribution, meaning that $Q\rightarrow
M $ is a smooth subbundle of the tangent bundle. Assume that a $\mathbb{C}%
\mathrm{l}\left( Q\right) $-module structure on a complex Hermitian vector
bundle $E$ is given, and we will now define transverse Dirac operators on
sections of $E$. Similar to the above, $M$ is a closed Riemannian manifold, $%
c:Q\rightarrow \mathrm{End}\left( E\right) $ is the Clifford multiplication
on $E$, and $\nabla ^{E}$ is a $\mathbb{C}\mathrm{l}\left( Q\right) $
connection that is compatible with the metric on $M$; that is, Clifford
multiplication by each vector is skew-Hermitian, and we require%
\begin{equation*}
\nabla _{X}^{E}\left( c\left( V\right) s\right) =c\left( \nabla
_{X}^{Q}V\right) s+c\left( V\right) \nabla _{X}^{E}s
\end{equation*}%
for all $X\in \Gamma \left( TM\right) $, $V\in \Gamma Q$, and $s\in \Gamma E$%
.

\begin{remark}
For a given distribution $Q\subset TM$, it is always possible to obtain a
bundle of $\mathbb{C}\mathrm{l}\left( Q\right) $-modules with Clifford
connection from a bundle of $\mathbb{C}\mathrm{l}\left( TM\right) $-Clifford
modules, but not all such $\mathbb{C}\mathrm{l}\left( Q\right) $ connections
are of that type.
\end{remark}

Let $L=Q^{\bot }$, let $\left( f_{1},...,f_{q}\right) $ be a local
orthonormal frame for $Q$, and let $\pi :TM\rightarrow Q$ be the orthogonal
projection. We define the Dirac operator $A_{Q}$ corresponding to the
distribution $Q$ as 
\begin{equation}
A_{Q}=\sum_{j=1}^{q}c\left( f_{j}\right) \nabla _{f_{j}}^{E}.  \label{AQdef}
\end{equation}%
This definition is again independent of the choice of orthonormal frame; in
fact it is the composition of the maps 
\begin{equation*}
\Gamma \left( E\right) \overset{\nabla ^{E}}{\rightarrow }\Gamma \left(
T^{\ast }M\otimes E\right) \overset{\cong }{\rightarrow }\Gamma \left(
TM\otimes E\right) \overset{\pi }{\rightarrow }\Gamma \left( Q\otimes
E\right) \overset{c}{\rightarrow }\Gamma \left( E\right) .
\end{equation*}%
We now calculate the formal adjoint of $A_{Q}$, in precisely the same way
that we showed the formal self-adjointness of the ordinary Dirac operator.
Letting $\left( s_{1},s_{2}\right) $ denote the pointwise inner product of
sections of $E$, we have%
\begin{eqnarray*}
\left( A_{Q}s_{1},s_{2}\right) &=&\sum_{j=1}^{q}\left( c\left( \pi
f_{j}\right) \nabla _{f_{j}}^{E}s_{1},s_{2}\right) \\
&=&\sum_{j=1}^{q}-\left( \nabla _{f_{j}}^{E}s_{1},c\left( \pi f_{j}\right)
s_{2}\right) .
\end{eqnarray*}%
Since$\nabla ^{E}$ is a metric connection, 
\begin{eqnarray}
\left( A_{Q}s_{1},s_{2}\right) &=&\sum \left( -f_{j}\left( s_{1},c\left( \pi
f_{j}\right) s_{2}\right) +\left( s_{1},\nabla _{f_{j}}^{E}c\left( \pi
f_{j}\right) s_{2}\right) \right)  \notag \\
&=&\sum {\Bigg (}-f_{j}\left( s_{1},c\left( \pi f_{j}\right) s_{2}\right)
+\left( s_{1},c\left( \pi f_{j}\right) \nabla _{f_{j}}^{E}s_{2}\right) 
\notag \\
&&+\left( s_{1},c\left( \pi \nabla _{f_{j}}^{M}\pi f_{j}\right) s_{2}\right) 
{\Bigg ),}  \label{AQexpression}
\end{eqnarray}%
by the $\mathbb{C}\mathrm{l}\left( Q\right) $-compatibility. Now, we do not
have the freedom to choose the frame so that the covariant derivatives
vanish at a certain point, because we know nothing about the distribution $Q$%
. Hence we define the vector fields 
\begin{equation*}
V=\sum_{j=1}^{q}\pi \nabla _{f_{j}}^{M}f_{j}~~,~H^{L}=\sum_{j=q+1}^{n}\pi
\nabla _{f_{j}}^{M}f_{j}~.
\end{equation*}%
Note that $H^{L}$ is precisely the mean curvature of the distribution $%
L=Q^{\bot }$. Further, letting $\omega $ be the one-form defined by 
\begin{equation*}
\omega \left( X\right) =\left( s_{1},c\left( \pi X\right) s_{2}\right) ,
\end{equation*}%
and letting $\left( f_{1},...,f_{q},f_{q+1},...,f_{n}\right) $ be an
extension of the frame of $Q$ to be an orthonormal frame of $TM$,%
\begin{eqnarray*}
\delta \omega &=&-\sum_{j=1}^{n}i\left( f_{j}\right) \nabla _{f_{j}}\omega \\
&=&-\sum_{j=1}^{n}\left( f_{j}\omega \left( f_{j}\right) -\omega \left(
\nabla _{f_{j}}f_{j}\right) \right) \\
&=&\sum_{j=1}^{n}\left( -f_{j}\left( s_{1},c\left( \pi f_{j}\right)
s_{2}\right) +\left( s_{1},c\left( \pi \nabla _{f_{j}}^{M}f_{j}\right)
s_{2}\right) \right) \\
&=&\left( s_{1},c\left( V+H^{L}\right) s_{2}\right) +\sum_{j=1}^{n}\left(
-f_{j}\left( s_{1},c\left( \pi f_{j}\right) s_{2}\right) \right) \\
&=&\left( s_{1},c\left( V+H^{L}\right) s_{2}\right) +\sum_{j=1}^{q}\left(
-f_{j}\left( s_{1},c\left( \pi f_{j}\right) s_{2}\right) \right) .
\end{eqnarray*}%
From (\ref{AQexpression}) we have 
\begin{eqnarray*}
\left( A_{Q}s_{1},s_{2}\right) &=&\delta \omega -\left( s_{1},c\left(
V+H^{L}\right) s_{2}\right) \\
&&+\left( s_{1},A_{Q}s_{2}\right) +\left( s_{1},c\left( V\right) s_{2}\right)
\\
\left( A_{Q}s_{1},s_{2}\right) &=&\delta \omega +\left(
s_{1},A_{Q}s_{2}\right) -\left( s_{1},c\left( H^{L}\right) s_{2}\right) .
\end{eqnarray*}%
Thus, by integrating over the manifold (which sends $\delta \omega $ to
zero), we see that the formal $L^{2}$-adjoint of $A_{Q}$ is%
\begin{equation*}
A_{Q}^{\ast }=A_{Q}-c\left( H^{L}\right) .
\end{equation*}%
Since $c\left( H^{L}\right) $ is skew-adjoint, the new operator%
\begin{equation}
D_{Q}=A_{Q}-\frac{1}{2}c\left( H^{L}\right)  \label{DQdef}
\end{equation}%
is formally self-adjoint.

A quick look at \cite{C} yields the following.

\begin{theorem}
(in \cite{PrRi}) For each distribution $Q\subset TM$ and every bundle $E$ of 
$\mathbb{C}\mathrm{l}\left( Q\right) $-modules, the transversally elliptic
operator $D_{Q}$ defined by (\ref{AQdef}) and (\ref{DQdef}) is essentially
self-adjoint.
\end{theorem}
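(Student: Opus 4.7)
My plan is to reduce essential self-adjointness of $D_{Q}$ to a direct application of Chernoff's finite-propagation-speed theorem (the classical result in \cite{C}), since the preceding computation has already established formal self-adjointness of $D_{Q}$ on $\Gamma(E)$. The key observation is that Chernoff's method bypasses ellipticity entirely and needs only a uniform bound on the principal symbol together with completeness, both of which are evidently available here.

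First I would record that $D_{Q}=A_{Q}-\tfrac{1}{2}c(H^{L})$ is a symmetric first-order differential operator on $\Gamma(E)$: the identity $A_{Q}^{\ast }=A_{Q}-c(H^{L})$ derived just above the theorem gives $\left\langle D_{Q}s_{1},s_{2}\right\rangle =\left\langle s_{1},D_{Q}s_{2}\right\rangle$ for all smooth sections, and $D_{Q}$ has the same principal symbol as $A_{Q}$ because $c(H^{L})$ is a bundle endomorphism.

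Next I would verify the bounded-propagation-speed hypothesis by computing the principal symbol. By the same coordinate-free calculation used for the ordinary Dirac operator, for $\xi \in T_{x}^{\ast }M$ one obtains
\begin{equation*}
\sigma (D_{Q})(\xi )=\sigma (A_{Q})(\xi )=ic(\pi \xi ^{\#}).
\end{equation*}
Since $c(v)$ is skew-adjoint with $c(v)^{2}=-|v|^{2}$ for $v\in Q$, the endomorphism $c(\pi \xi ^{\#})$ has operator norm exactly $|\pi \xi ^{\#}|\leq |\xi |$, so $\Vert \sigma (D_{Q})(\xi )\Vert \leq |\xi |$ uniformly on $M$; equivalently, the formal symmetric hyperbolic equation $\partial _{t}u=iD_{Q}u$ has propagation speed bounded by $1$.

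With these two ingredients I would invoke Chernoff's theorem from \cite{C}: any symmetric first-order differential operator on a Hermitian bundle over a complete Riemannian manifold whose principal symbol is uniformly bounded in $|\xi |$ is essentially self-adjoint on the space of compactly supported smooth sections. Because $M$ is closed, it is complete and $C_{c}^{\infty }(E)=\Gamma (E)$, so Chernoff's hypotheses are met and essential self-adjointness of $D_{Q}$ follows. The conceptual point worth flagging, which is really why \cite{C} rather than the standard closed-manifold elliptic theory must be invoked, is that $D_{Q}$ is only transversally elliptic: the mere bound $\Vert \sigma (D_{Q})(\xi )\Vert \leq |\xi |$ (without invertibility off the zero section) is exactly the hypothesis Chernoff's wave-equation argument is designed to exploit, so there is no serious obstacle once the symbol bound is in hand.
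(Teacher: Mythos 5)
Your proposal is correct and matches the paper's own argument, which consists solely of the remark that the result follows from ``a quick look at'' Chernoff's theorem in \cite{C} once formal self-adjointness of $D_{Q}$ has been established. You have simply spelled out what that quick look entails---symmetry from the preceding adjoint computation, the symbol bound $\|\sigma(D_{Q})(\xi)\|\leq|\xi|$ giving finite propagation speed, and completeness of the closed manifold $M$---which is exactly the intended reading.
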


\begin{remark}
In general, the spectrum of $D_{Q}$ is not necessarily discrete. In the case
of Riemannian foliations, we identify $Q$ with the normal bundle of the
foliation, and one typically restricts to the space of basic sections. In
this case, the spectrum of $D_{Q}$ restricted to the basic sections is
discrete.
\end{remark}

\begin{exercise}
Let $M=T^{2}=\mathbb{R}^{2}\diagup \mathbb{Z}^{2}$, and consider the
distribution $Q$ defined by the vectors parallel to $\left( 1,r\right) $
with $r\in \mathbb{R}$. Calculate the operator $D_{Q}$ and its spectrum,
where the Clifford multiplication is just complex number multiplication (on
a trivial bundle $E_{x}=\mathbb{C}$). Does it make a difference if $r$ is
rational?
\end{exercise}

\begin{exercise}
With $M$ and $Q$ as in the last exercise, let $E$ be the bundle $\wedge
^{\ast }Q^{\ast }$. Now calculate $D_{Q}$ and its spectrum.
\end{exercise}

\begin{exercise}
Consider the radially symmetric Heisenberg distribution, defined as follows.
Let $\alpha \in \Omega ^{1}\left( \mathbb{R}^{3}\right) $ be the
differential form 
\begin{eqnarray*}
\alpha &=&dz-\frac{1}{2}r^{2}d\theta \\
&=&dz-\frac{1}{2}\left( xdy-ydx\right) .
\end{eqnarray*}%
Note that 
\begin{equation*}
d\alpha =-rdr\wedge d\theta =-dx\wedge dy,
\end{equation*}%
so that $\alpha $ is a contact form because%
\begin{equation*}
\alpha \wedge d\alpha =-dx\wedge dy\wedge dz\neq 0
\end{equation*}%
at each point of $H$. The two-dimensional distribution $Q\subset \mathbb{R}%
^{3}$ is defined as $Q=\ker \alpha $. Calculate the operator $D_{Q}$.
\end{exercise}

\begin{exercise}
Let $\left( M,\alpha \right) $ be a manifold of dimension $2n+1$ with
contact form $\alpha $; that is, $\alpha $ is a one-form such that 
\begin{equation*}
\alpha \wedge \left( d\alpha \right) ^{n}
\end{equation*}%
is everywhere nonsingular. The distribution $Q=\ker \alpha $ is the contact
distribution. Calculate the mean curvature of $Q$ in terms of $\alpha $.
\end{exercise}

\begin{exercise}
(This example is in the paper \cite{PrRi}.)We consider the torus $M=\left( 
\mathbb{R}\diagup 2\pi \mathbb{Z}\right) ^{2}$ with the metric $e^{2g\left(
y\right) }dx^{2}+dy^{2}$ for some $2\pi $-periodic smooth function $g$.
Consider the orthogonal distributions $L=\mathrm{span}\left\{ \partial
_{y}\right\} $ and $Q=\mathrm{span}\left\{ \partial _{x}\right\} $. Let $E$
be the trivial complex line bundle over $M$, and let $\mathbb{C}\mathrm{l}%
\left( Q\right) $ and $\mathbb{C}\mathrm{l}\left( L\right) $ both act on $E$
via $c\left( \partial _{y}\right) =i=c\left( e^{-g\left( y\right) }\partial
_{x}\right) $. Show that the mean curvatures of these distributions are%
\begin{equation*}
H^{Q}=-g^{\prime }\left( y\right) \partial _{y}\text{ and }H^{L}=0
\end{equation*}%
From formulas (\ref{AQdef}) and (\ref{DQdef}),%
\begin{equation*}
A_{L}=i\partial _{y},\text{ and }D_{L}=i\left( \partial _{y}+\frac{1}{2}%
g^{\prime }\left( y\right) \right) .
\end{equation*}%
Show that the spectrum $\sigma \left( D_{L}\right) =\mathbb{Z}$ is a set
consisting of eigenvalues of infinite multiplicity.
\end{exercise}

\begin{exercise}
In the last example, show that the operator%
\begin{equation*}
D_{Q}=ie^{-g\left( y\right) }\partial _{x}
\end{equation*}%
has spectrum%
\begin{equation*}
\sigma \left( D_{Q}\right) =\bigcup\limits_{n\in \mathbb{Z}}n\left[ a,b%
\right] ,
\end{equation*}%
where $\left[ a,b\right] $ is the range of $e^{-g\left( y\right) }$.
\end{exercise}

\section{Basic Dirac operators on Riemannian foliations}

The results of this section are joint work with G. Habib and can be found in 
\cite{HabRi} and \cite{HabRi2}.

\subsection{Invariance of the spectrum of basic Dirac operators\label%
{basicDiracSection}}

Suppose a closed manifold $M$ is endowed with the structure of a Riemannian
foliation $\left( M,\mathcal{F~},g_{Q}~\right) $. The word \textbf{Riemannian%
} means that there is a metric on the local space of leaves --- a
holonomy-invariant transverse metric $g_{Q}$ on the normal bundle $%
Q=TM\diagup T\mathcal{F~}$ . The phrase \textbf{holonomy-invariant} means
the transverse Lie derivative $\mathcal{L}_{X}g_{Q}$ is zero for all
leafwise vector fields $X\in \Gamma (T\mathcal{F})$.

We often assume that the manifold is endowed with the additional structure
of a \textbf{bundle-like metric} \cite{Re}, i.e. the metric $g$ on $M$
induces the metric on $Q\simeq N\mathcal{F~}=\left( T\mathcal{F}\right)
^{\perp }$. Every Riemannian foliation admits bundle-like metrics that are
compatible with a given $\left( M,\mathcal{F},g_{Q}\right) $ structure.
There are many choices, since one may freely choose the metric along the
leaves and also the transverse subbundle $N\mathcal{F}$. We note that a
bundle-like metric on a smooth foliation is exactly a metric on the manifold
such that the leaves of the foliation are locally equidistant. There are
topological restrictions to the existence of bundle-like metrics (and thus
Riemannian foliations). Important examples of requirements for the existence
of a Riemannian foliations may be found in \cite{KT1}, \cite{KT2}, \cite{Mo}%
, \cite{To}, \cite{Wo}, \cite{Tar}. One geometric requirement is that, for
any metric on the manifold, the orthogonal projection 
\begin{equation*}
P:L^{2}\left( \Omega \left( M\right) \right) \rightarrow L^{2}\left( \Omega
\left( M,\mathcal{F}\right) \right) 
\end{equation*}%
must map the subspace of smooth forms onto the subspace of smooth basic
forms (\cite{PaRi}). Recall that \textbf{basic forms} are forms that depend
only on the transverse variables. The space $\Omega \left( M,\mathcal{F}%
\right) $ of basic forms is defined invariantly as%
\begin{equation*}
\Omega \left( M,\mathcal{F}\right) =\left\{ \beta \in \Omega \left( M\right)
:i\left( X\right) \beta =0\text{ and }i\left( X\right) d\beta =0~\text{for
all }X\in \Gamma \left( T\mathcal{F}\right) \right\} .
\end{equation*}%
The basic forms $\Omega \left( M,\mathcal{F}\right) $ are preserved by the
exterior derivative, and the resulting cohomology is called \textbf{basic
cohomology} $H^{\ast }\left( M,\mathcal{F}\right) $ (see also Section ). It
is known that the basic cohomology groups are finite-dimensional in the
Riemannian foliation case. See \cite{EK}, \cite{ElKacNicol}, \cite{KT1}, 
\cite{KTFol}, \cite{KT4}, \cite{Gh} for facts about basic cohomology and
Riemannian foliations. For later use, the \textbf{basic Euler characteristic}
is defined to be 
\begin{equation*}
\chi \left( M,\mathcal{F}\right) =\sum \left( -1\right) ^{j}\dim H^{j}\left(
M,\mathcal{F}\right) .
\end{equation*}

We now discuss the construction of the basic Dirac operator, a construction
which requires a choice of bundle-like metric. See \cite{KTFol}, \cite{KT4}, 
\cite{KT1}, \cite{DGKY}, \cite{GlK}, \cite{PrRi}, \cite{Ju}, \cite{JuRi}, 
\cite{Hab}, \cite{HabRi}, \cite{BrKRi1}, \cite{BKRi2} for related results.
Let $(M,\mathcal{F})$ be a Riemannian manifold endowed with a Riemannian
foliation. Let $E\rightarrow M$ be a foliated vector bundle (see \cite{KT2})
that is a bundle of $\mathbb{C}\mathrm{l}(Q)$ Clifford modules with
compatible connection $\nabla ^{E}$. This means that foliation lifts to a
horizontal foliation in $TE$. Another way of saying this is that connection
is flat along the leaves of $\mathcal{F}$. When this happens, it is always
possible to choose a basic connection for $E$ --- that is, a connection for
which the connection and curvature forms are actually (Lie algebra-valued)
basic forms.

Let $A_{N\mathcal{F}}$ and $D_{N\mathcal{F}}$ be the associated transversal
Dirac operators as in the previous section. The transversal Dirac operator $%
A_{N\mathcal{F}}$ fixes the basic sections $\Gamma _{b}(E)\subset \Gamma (E)$
(i.e. $\Gamma _{b}(E)=\{s\in \Gamma (E):\nabla _{X}^{E}s=0$ for all $X\in
\Gamma (T\mathcal{F})\}$) but is not symmetric on this subspace. Let $%
P_{b}:L^{2}\left( \Gamma \left( E\right) \right) \rightarrow L^{2}\left(
\Gamma _{b}\left( E\right) \right) $ be the orthogonal projection, which can
be shown to map smooth sections to smooth basic sections. We define the
basic Dirac operator to be%
\begin{eqnarray*}
D_{b} &:&=P_{b}D_{N\mathcal{F}}P_{b} \\
&=&A_{N\mathcal{F}}-\frac{1}{2}c\left( \kappa _{b}^{\sharp }\right) :\Gamma
_{b}\left( E\right) \rightarrow \Gamma _{b}\left( E\right)
\end{eqnarray*}%
\begin{equation*}
P_{b}A_{N\mathcal{F}}P_{b}=A_{N\mathcal{F}}P_{b},~P_{b}c\left( \kappa
^{\sharp }\right) P_{b}=c\left( \kappa _{b}^{\sharp }\right) P_{b}
\end{equation*}%
Here, $\kappa _{b}$ is the $L^{2}$-orthogonal projection of $\kappa $ onto
the space of basic forms as explained above, and $\kappa _{b}^{\sharp }$ is
the corresponding basic vector field. Then $D_{b}$ is an essentially
self-adjoint, transversally elliptic operator on $\Gamma _{b}(E)$. The local
formula for $D_{b}$ is 
\begin{equation*}
D_{b}s=\sum_{i=1}^{q}e_{i}\cdot \nabla _{e_{i}}^{E}s-\frac{1}{2}\kappa
_{b}^{\sharp }\cdot s~,
\end{equation*}%
where $\{e_{i}\}_{i=1,\cdots ,q}$ is a local orthonormal frame of $Q$. Then $%
D_{b}$ has discrete spectrum (\cite{GlK}, \cite{DGKY}, \cite{BKRi2}).

An example of the basic Dirac operator is as follows. Using the bundle $%
\wedge ^{\ast }Q^{\ast }$ as the Clifford bundle with Clifford action $%
e\cdot ~=e^{\ast }\wedge -e^{\ast }\lrcorner $ in analogy to the ordinary de
Rham operator, we have%
\begin{eqnarray*}
D_{b} &=&d+\delta _{b}-\frac{1}{2}\kappa _{b}\lrcorner -\frac{1}{2}\kappa
_{b}\wedge . \\
&=&\widetilde{d}+\widetilde{\delta }.
\end{eqnarray*}%
One might have incorrectly guessed that $d+\delta _{b}$ is the basic de Rham
operator in analogy to the ordinary de Rham operator, for this operator is
essentially self-adjoint, and the associated basic Laplacian yields basic
Hodge theory that can be used to compute the basic cohomology. The square $%
D_{b}^{2}$ of this operator and the basic Laplacian $\Delta _{b}$ do have
the same principal transverse symbol. In \cite{HabRi}, we showed the
invariance of the spectrum of $D_{b}$ with respect to a change of metric on $%
M$ in any way that keeps the transverse metric on the normal bundle intact
(this includes modifying the subbundle $N\mathcal{F}\subset TM$, as one must
do in order to make the mean curvature basic, for example). That is,

\begin{theorem}
\label{inv}(In \cite{HabRi} ) Let $(M,\mathcal{F})$ be a compact Riemannian
manifold endowed with a Riemannian foliation and basic Clifford bundle $%
E\rightarrow M$. The spectrum of the basic Dirac operator is the same for
every possible choice of bundle-like metric that is associated to the
transverse metric on the quotient bundle $Q$.
\end{theorem}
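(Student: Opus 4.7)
The plan is to exhibit an explicit unitary equivalence between the basic Dirac operators $D_b$ and $D_b'$ associated to two different bundle-like metrics $g,g'$ inducing the same transverse metric $g_Q$ on $Q$. Since each is essentially self-adjoint on its Hilbert space of $L^2$ basic sections, such a unitary equivalence immediately yields equality of spectra.

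First I would isolate the two contributions in $D_b = A_{N\mathcal{F}} - \tfrac{1}{2}c(\kappa_b^\sharp)$ and track how each depends on the bundle-like metric. The operator $A_{N\mathcal{F}}$, restricted to basic sections, sees only transverse data: for $s\in\Gamma_b(E)$ and $X\in\Gamma(T\mathcal{F})$ one has $\nabla^E_X s = 0$, so $\nabla^E_Y s$ depends only on the class $\pi(Y)\in Q$, and since $\pi|_{N\mathcal{F}}$ is an isometry onto $(Q,g_Q)$ for every bundle-like metric inducing $g_Q$, the composition $A_{N\mathcal{F}}P_b$ is identical for $g$ and $g'$. All of the metric dependence is therefore concentrated in the Clifford term $c(\kappa_b^\sharp)$. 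The key structural fact I would invoke is \'Alvarez L\'opez's theorem: the basic component $\kappa_b$ is closed and its class in the first basic cohomology $H^1(M,\mathcal{F})$ does not depend on the choice of bundle-like metric. Consequently $\kappa_b' - \kappa_b = dh$ for some smooth basic function $h$.

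The unitary is then the natural one. Connecting $g$ and $g'$ by a smooth path of bundle-like metrics, one verifies that the Riemannian volume forms are related by $d\mathrm{vol}_{g'} = e^{-h}\, d\mathrm{vol}_g$, with the \emph{same} function $h$ that appeared above; this identity ultimately comes from integrating a Rummler-type formula which couples the rate of change of the leafwise volume density to the mean curvature one-form. Setting
\begin{equation*}
U : L^2_g(\Gamma_b(E)) \longrightarrow L^2_{g'}(\Gamma_b(E)), \qquad Us = e^{h/2} s,
\end{equation*}
the volume identity makes $U$ an isometry. A direct computation using $\nabla^E(e^{h/2} s) = \tfrac{1}{2} e^{h/2}(dh)\otimes s + e^{h/2}\nabla^E s$ together with $\sum_i c(e_i)\,e_i(h) = c((dh)^\sharp)$ then gives
\begin{equation*}
U D_b U^{-1} \;=\; A_{N\mathcal{F}} - \tfrac{1}{2} c(\kappa_b^\sharp) - \tfrac{1}{2} c((dh)^\sharp) \;=\; A_{N\mathcal{F}} - \tfrac{1}{2} c((\kappa_b')^\sharp) \;=\; D_b',
\end{equation*}
which is the desired unitary equivalence, whence $\sigma(D_b) = \sigma(D_b')$.

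The main obstacle I expect is the precise coupling between the cohomological equation $\kappa_b' - \kappa_b = dh$ and the volume-form relation $d\mathrm{vol}_{g'}/d\mathrm{vol}_g = e^{-h}$ with the \emph{same} $h$; it is this coupling that simultaneously makes $U$ an isometry and turns the algebraic conjugation $UD_bU^{-1}$ into $D_b'$. Pinning it down requires combining Dom\'\i nguez's theorem (that one can always choose a bundle-like metric whose mean curvature is basic, so $\kappa = \kappa_b$), the Rummler formula for the leafwise divergence of the characteristic form, and a Hodge decomposition of $\kappa$ with respect to the leafwise-transverse splitting; one also has to verify that the space of smooth basic sections is intrinsic to the foliated Clifford bundle so that $U$ really maps the correct domains and extends to the Sobolev closures where $D_b$ and $D_b'$ are self-adjoint. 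Once this compatibility is established the conjugation above closes the argument, and applying the same reasoning along a smooth family $g_t$ of bundle-like metrics between $g$ and $g'$ shows that the spectrum is in fact constant along the whole path.
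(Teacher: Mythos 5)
Your approach matches the paper's in all essentials: you invoke \'Alvarez L\'opez's theorem to write $\kappa_b' = \kappa_b + dh$ for a basic function $h$, observe that the remaining part of $D_b$ (the operator $A_{N\mathcal{F}}$ acting on basic sections) is metric-independent because the isomorphism $N\mathcal{F}\to Q$ is an isometry for every compatible bundle-like metric, and conjugate $D_b' = e^{h/2}D_b e^{-h/2}$. This is exactly the argument sketched in the text and attributed to \cite{HabRi}.

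One technical point deserves correction, however. Your claim that $d\mathrm{vol}_{g'} = e^{-h}\,d\mathrm{vol}_g$ is too strong and in fact false in general: the two bundle-like metrics can differ by a non-basic conformal factor along the leaves (e.g.\ rescaling the leafwise metric by a non-basic positive function on a product foliation), which changes the volume form in a non-basic way, whereas $e^{-h}$ is necessarily basic. What is actually true --- and what is needed to make $U = e^{h/2}$ an isometry on basic $L^2$ sections --- is the weaker integrated statement that $\int_M (s_1,s_2)\,d\mathrm{vol}_{g'} = \int_M (s_1,s_2)\,e^{-h}\,d\mathrm{vol}_g$ for all basic sections $s_1,s_2$; equivalently, the measures obtained by pushing $d\mathrm{vol}_{g}$ and $d\mathrm{vol}_{g'}$ forward to the leaf-closure space differ by the basic factor $e^{-h}$. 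That is the precise coupling between the mean curvature one-form and the leaf volume density coming from the Rummler/\'Alvarez L\'opez analysis, and it must be stated and verified at the level of basic $L^2$ rather than at the level of volume forms. Alternatively, you can bypass unitarity altogether: since $h$ is a bounded basic function, $e^{h/2}$ is a bounded invertible operator between the two Hilbert spaces of basic $L^2$ sections, and similarity by a bounded invertible operator preserves the spectrum; combined with the (essential) self-adjointness of both $D_b$ and $D_b'$ on their respective Hilbert spaces, this already gives coincidence of spectra with multiplicity, which is all the theorem requires.
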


We emphasize that the basic Dirac operator $D_{b}$ depends on the choice of
bundle-like metric, not merely on the Clifford structure and Riemannian
foliation structure, since both projections $T^{\ast }M\rightarrow Q^{\ast }$
and $P$ depend on the leafwise metric. It is well-known that the eigenvalues
of the basic Laplacian $\Delta _{b}$ (closely related to $D_{b}^{2}$) depend
on the choice of bundle-like metric; for example, in \cite[Corollary 3.8]%
{Ri2}, it is shown that the spectrum of the basic Laplacian on functions
determines the $L^{2}$-norm of the mean curvature on a transversally
oriented foliation of codimension one. If the foliation were taut, then a
bundle-like metric could be chosen so that the mean curvature is identically
zero, and other metrics could be chosen where the mean curvature is nonzero.
This is one reason why the invariance of the spectrum of the basic Dirac
operator is a surprise.

\begin{exercise}
Suppose that $\mathcal{S}$ is a closed subspace of a Hilbert space $\mathcal{%
H}$, and let $L:\mathcal{H}\rightarrow \mathcal{H}$ be a bounded linear map
such that $L\left( \mathcal{S}\right) \subseteq \mathcal{S}$. Let $L_{%
\mathcal{S}}$ denote the restriction $L_{\mathcal{S}}:\mathcal{S}\rightarrow 
\mathcal{S}$ defined by $L_{\mathcal{S}}\left( v\right) =L\left( v\right) $
for all $v\in \mathcal{S}$. Prove that the adjoint of $L_{\mathcal{S}}$
satisfies $L_{\mathcal{S}}^{\ast }\left( v\right) =P_{\mathcal{S}}L^{\ast
}\left( v\right) $, where $L^{\ast }$ is the adjoint of $L$ and $P_{\mathcal{%
S}}$ is the orthogonal projection of $\mathcal{H}$ onto $\mathcal{S}$. Show
that the maximal subspace $\mathcal{W}\subseteq \mathcal{S}$ such that $%
\left. L_{\mathcal{S}}^{\ast }\right\vert _{\mathcal{W}}=\left. L^{\ast
}\right\vert _{\mathcal{W}}$ satisfies%
\begin{equation*}
\mathcal{W}=\left( \mathcal{S}\cap L\left( \mathcal{S}^{\bot }\right)
\right) ^{\bot _{\mathcal{S}}},
\end{equation*}%
where $\mathcal{S}^{\bot }$ is the orthogonal complement of $\mathcal{S}$ in 
$\mathcal{H}$ and the superscript $\bot _{\mathcal{S}}$ denotes the
orthogonal complement in $\mathcal{S}$.
\end{exercise}

\begin{exercise}
Prove that the metric on a Riemannian manifold $M$ with a smooth foliation $%
\mathcal{F}$ is bundle-like if and only if the normal bundle $N\mathcal{F}$
with respect to that metric is totally geodesic.
\end{exercise}

\begin{exercise}
Let $\left( M,\mathcal{F}\right) $ be a transversally oriented Riemannian
foliation of codimension $q$ with bundle-like metric, and let $\nu $ be the
transversal volume form. The \textbf{transversal Hodge star operator} $%
\overline{\ast }:$ $\wedge ^{\ast }Q^{\ast }\rightarrow \wedge ^{\ast
}Q^{\ast }$ is defined by%
\begin{equation*}
\alpha \wedge \overline{\ast }\beta =\left( \alpha ,\beta \right) \nu ,
\end{equation*}%
for $\alpha ,\beta \in \Omega ^{k}\left( M,\mathcal{F}\right) $, so that $%
\overline{\ast }1=\nu $, $\overline{\ast }\nu =1$. Let the transversal
codifferential $\delta _{T}:$ be defined by%
\begin{equation*}
\delta _{T}=\left( -1\right) ^{qk+q+1}\overline{\ast }d\overline{\ast }%
:\Omega ^{k}\left( M,\mathcal{F}\right) \rightarrow \Omega ^{k-1}\left( M,%
\mathcal{F}\right) .
\end{equation*}%
As above, let $\delta _{b}$ be the adjoint of $d$ with respect to $%
L^{2}\left( \Omega \left( M,\mathcal{F}\right) \right) $.\newline
Prove the following identities:

\begin{itemize}
\item $\overline{\ast }^{2}=\left( -1\right) ^{k\left( q-k\right) }$ on
basic $k$-forms.

\item If $\beta $ is a basic one-form, then $\left( \beta \lrcorner \right)
=\left( -1\right) ^{q\left( k+1\right) }\overline{\ast }\left( \beta \wedge
\right) \overline{\ast }$ as operators on basic $k$-forms.

\item $\delta _{b}=\delta _{T}+\kappa _{b}\lrcorner $

\item $\delta _{b}\nu =\overline{\ast }\kappa _{b}$

\item $d\kappa _{b}=0$ (Hint: compute $\delta _{b}^{2}\nu $.)

\item $\overline{\ast }\widetilde{d}=\pm \widetilde{\delta }\overline{\ast }$%
, with $\widetilde{d}=d-\frac{1}{2}\kappa _{b}\wedge ,\,\,\widetilde{\delta }%
=\delta _{b}-\frac{1}{2}\kappa _{b}\lrcorner .$
\end{itemize}
\end{exercise}

\subsection{The basic de Rham operator}

From the previous section, the basic de Rham operator is $D_{b}=\widetilde{d}%
+\widetilde{\delta }$ acting on basic forms, where%
\begin{equation*}
\widetilde{d}=d-\frac{1}{2}\kappa _{b}\wedge ,\,\,\widetilde{\delta }=\delta
_{b}-\frac{1}{2}\kappa _{b}\lrcorner .
\end{equation*}%
Unlike the ordinary and well-studied basic Laplacian, the eigenvalues of $%
\widetilde{\Delta }=D_{b}^{2}$ are invariants of the Riemannian foliation
structure alone and independent of the choice of compatible bundle-like
metric. The operators $\widetilde{d}$ and $\widetilde{\delta }$ have
following interesting properties.

\begin{lemma}
$\widetilde{\delta }$ is the formal adjoint of $\widetilde{d}$ .
\end{lemma}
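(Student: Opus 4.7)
The plan is to verify the defining equation
\[
\langle \widetilde{d}\alpha,\beta\rangle \;=\; \langle \alpha,\widetilde{\delta}\beta\rangle
\]
for all basic forms $\alpha,\beta$ with $\deg\beta=\deg\alpha+1$, by splitting each operator into its two summands and pairing them separately. The ingredients are all either stated in the excerpt or collected in the preceding exercise list, so no new analytic input is required; the content of the lemma is really a short algebraic manipulation together with one known adjoint identity.

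First I would invoke the fact that $\delta_b$ is, by its very definition in the previous section, the formal $L^2$-adjoint of $d$ on the space of basic forms $\Omega(M,\mathcal F)$: for any basic $\alpha,\beta$ of consecutive degree, $\langle d\alpha,\beta\rangle = \langle\alpha,\delta_b\beta\rangle$. (This uses that $d$ preserves basic forms, that $P_b$ is self-adjoint, and the identity $\delta_b=\delta_T+\kappa_b\lrcorner$ from the exercise, though the formula $\delta_b = P_b \delta P_b$ is enough.) Next I would record the pointwise adjointness of wedge and interior product with a one-form: for any one-form $\omega$ and forms $\eta,\zeta$ of appropriate degrees, $(\omega\wedge\eta,\zeta)=(\eta,\omega\lrcorner\zeta)$, which integrates to $\langle\omega\wedge\eta,\zeta\rangle=\langle\eta,\omega\lrcorner\zeta\rangle$. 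Applying this with $\omega=\kappa_b$ gives $\langle \kappa_b\wedge\alpha,\beta\rangle = \langle \alpha,\kappa_b\lrcorner\beta\rangle$. Both $\kappa_b\wedge$ and $\kappa_b\lrcorner$ preserve basic forms because $\kappa_b$ is basic, so there is no issue with staying inside $\Omega(M,\mathcal F)$.

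Combining the two adjoint relations linearly yields
\[
\langle \widetilde{d}\alpha,\beta\rangle
= \langle d\alpha,\beta\rangle - \tfrac{1}{2}\langle \kappa_b\wedge\alpha,\beta\rangle
= \langle \alpha,\delta_b\beta\rangle - \tfrac{1}{2}\langle \alpha,\kappa_b\lrcorner\beta\rangle
= \langle \alpha,\widetilde{\delta}\beta\rangle,
\]
which is exactly the required identity. As a sanity check one can also verify this via the Hodge-duality relation $\overline{\ast}\widetilde{d}=\pm\widetilde{\delta}\,\overline{\ast}$ stated in the exercise, since $\overline{\ast}$ is an isometry on basic forms up to a sign.

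The only place where caution is needed is the first step: one must make sure that $\delta_b$ really is the adjoint of $d$ restricted to basic forms, not merely the restriction of the ambient codifferential $\delta$. Here the subtlety is that $\delta$ does not in general preserve basic forms, while $\delta_b=P_b\delta P_b$ does, and the formula $\delta_b=\delta_T+\kappa_b\lrcorner$ makes this compatibility explicit. Once this is understood, the lemma reduces to the two-line computation above, so there is no genuine obstacle; the statement is essentially a formal consequence of how $\widetilde{d}$ and $\widetilde{\delta}$ were built out of adjoint pairs.
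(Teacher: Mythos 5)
Your argument is correct, and since the paper states this lemma without a written proof (and also lists it as an exercise for the reader), your route is exactly the intended one: $\delta_b$ is by definition the $L^2$-adjoint of $d$ on the basic complex, $\kappa_b\lrcorner$ is the pointwise (hence $L^2$) adjoint of $\kappa_b\wedge$, and the two adjoint relations combine linearly to give $\langle \widetilde{d}\alpha,\beta\rangle=\langle\alpha,\widetilde{\delta}\beta\rangle$. Your observation that $\kappa_b\wedge$ and $\kappa_b\lrcorner$ stay inside $\Omega(M,\mathcal{F})$ because $\kappa_b$ is basic is the one point that needs checking and you handle it correctly; the side comment about $\delta_b=P_b\delta P_b$ versus $\delta_T+\kappa_b\lrcorner$ is not actually needed, since the adjointness of $\delta_b$ to $d$ on basic forms is taken as the definition of $\delta_b$ in the text.
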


\begin{lemma}
The maps $\widetilde{d}$ and $\widetilde{\delta }$ are differentials; that
is, $\widetilde{d}^{2}=0$, $\widetilde{\delta }^{2}=0$. As a result, $%
\widetilde{d}$ and $\widetilde{\delta }$ commute with $\widetilde{\Delta }%
=D_{b}^{2}$, and $\ker \left( \widetilde{d}+\widetilde{\delta }\right) =\ker
\left( \widetilde{\Delta }\right) $.
\end{lemma}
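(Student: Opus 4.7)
The plan is to reduce everything to the single nontrivial input $d\kappa_b=0$, a fact about Riemannian foliations (established for instance by \'Alvarez L\'opez) that is flagged as an exercise in the preceding subsection. Once $d\kappa_b=0$ is in hand, both differentials being square-zero, the commutation with $\widetilde{\Delta}$, and the identification of kernels are formal algebraic consequences.

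First I would compute $\widetilde{d}^{2}$ directly on a basic form $\alpha$. Expanding,
\begin{equation*}
\widetilde{d}^{2}\alpha=\bigl(d-\tfrac{1}{2}\kappa_{b}\wedge\bigr)\bigl(d\alpha-\tfrac{1}{2}\kappa_{b}\wedge\alpha\bigr)=-\tfrac{1}{2}(d\kappa_{b})\wedge\alpha+\tfrac{1}{2}\kappa_{b}\wedge d\alpha-\tfrac{1}{2}\kappa_{b}\wedge d\alpha+\tfrac{1}{4}\kappa_{b}\wedge\kappa_{b}\wedge\alpha,
\end{equation*}
using $d^{2}=0$ and the Leibniz rule. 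The last term vanishes because $\kappa_{b}$ is a $1$-form, the two middle terms cancel, and the first term vanishes by $d\kappa_{b}=0$. For $\widetilde{\delta}^{2}=0$ I would simply invoke the previous lemma: since $\widetilde{\delta}$ is the formal adjoint of $\widetilde{d}$, we get $\widetilde{\delta}^{2}=(\widetilde{d}^{2})^{\ast}=0$. (Alternatively, one can compute $\widetilde{\delta}^{2}$ directly using $\delta_{b}^{2}=0$ and the interior-product analogue of $d\kappa_{b}=0$, which will produce the relation $\delta_{b}\kappa_{b}^{\sharp}\lrcorner+\kappa_{b}^{\sharp}\lrcorner\,\delta_{b}=\text{(operator involving }\delta_{b}\kappa_{b}\text{)}$; but the adjoint argument is cleaner.)

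Next, from $\widetilde{d}^{2}=\widetilde{\delta}^{2}=0$ I would expand
\begin{equation*}
\widetilde{\Delta}=(\widetilde{d}+\widetilde{\delta})^{2}=\widetilde{d}\,\widetilde{\delta}+\widetilde{\delta}\,\widetilde{d},
\end{equation*}
so that $\widetilde{d}\,\widetilde{\Delta}=\widetilde{d}\,\widetilde{\delta}\,\widetilde{d}=\widetilde{\Delta}\,\widetilde{d}$, and similarly $\widetilde{\delta}\,\widetilde{\Delta}=\widetilde{\Delta}\,\widetilde{\delta}$. Finally, for the kernel identification, one inclusion is immediate: if $(\widetilde{d}+\widetilde{\delta})\alpha=0$ then $\widetilde{\Delta}\alpha=0$. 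For the reverse, if $\alpha\in\Gamma_{b}(E)$ is smooth and $\widetilde{\Delta}\alpha=0$, then using the formal self-adjointness of $D_{b}=\widetilde{d}+\widetilde{\delta}$ (available from the basic Dirac framework of the previous section),
\begin{equation*}
0=\langle\widetilde{\Delta}\alpha,\alpha\rangle=\langle\widetilde{d}\alpha,\widetilde{d}\alpha\rangle+\langle\widetilde{\delta}\alpha,\widetilde{\delta}\alpha\rangle,
\end{equation*}
forcing $\widetilde{d}\alpha=\widetilde{\delta}\alpha=0$ and hence $(\widetilde{d}+\widetilde{\delta})\alpha=0$.

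The only real obstacle is $d\kappa_{b}=0$, which is not purely formal: it depends on the Riemannian foliation hypothesis. I would follow the hint in the preceding exercise and derive it from $\delta_{b}^{2}\nu=0$, where $\nu$ is the transversal volume form; using $\delta_{b}=\delta_{T}+\kappa_{b}\lrcorner$ and $\delta_{b}\nu=\overline{\ast}\kappa_{b}$ from that exercise, one computes $\delta_{b}^{2}\nu=\delta_{b}\overline{\ast}\kappa_{b}=\pm\overline{\ast}\,d\kappa_{b}$, and invertibility of $\overline{\ast}$ on basic forms yields $d\kappa_{b}=0$. Everything else in the lemma is an algebraic manipulation once this single identity is available.
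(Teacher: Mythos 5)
Your proof is correct and follows exactly the route the paper intends, as signaled by the surrounding material: the preceding lemma establishes that $\widetilde{\delta}$ is the formal adjoint of $\widetilde{d}$, and the neighboring exercises supply $d\kappa_{b}=0$ (via $\delta_{b}^{2}\nu=0$) and the algebraic fact that $(d+\alpha\wedge)^{2}=0$ for any closed one-form $\alpha$. The paper states the lemma without a written proof, so your argument is precisely the proof being left to the reader; all steps check out, including the sign bookkeeping in the Leibniz expansion and the vanishing of the $\kappa_{b}\wedge\kappa_{b}$ term. One small optional simplification worth noting: for the final kernel identification you do not actually need $\widetilde{d}^{2}=\widetilde{\delta}^{2}=0$, since formal self-adjointness of $D_{b}$ alone gives $\langle\widetilde{\Delta}\alpha,\alpha\rangle=\|D_{b}\alpha\|^{2}$, hence $\ker\widetilde{\Delta}\subseteq\ker D_{b}$; however, your refined version showing $\widetilde{d}\alpha=\widetilde{\delta}\alpha=0$ separately is exactly what is needed for the Hodge decomposition in the following proposition, so it is the better form to record.
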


Let $\Omega^{k}\left( M,\mathcal{F}\right) $ denote the space of basic $k$%
-forms (either set of smooth forms or $L^{2}$-completion thereof), let $%
\widetilde{d}^{k}$ and $\widetilde{\delta _{b}}^{k}$ be the restrictions of $%
\widetilde{d}$ and $\widetilde{\delta _{b}}$ to $k$-forms, and let $%
\widetilde{\Delta }^{k}$ denote the restriction of $D_{b}^{2}$ to basic $k$%
-forms.

\begin{proposition}
(Hodge decomposition)\label{Hodge Theorem} We have 
\[
\Omega^{k}\left( M,%
\mathcal{F}\right) =\mathrm{image}\left( \widetilde{d}^{k-1}\right) \oplus 
\mathrm{image}\left( \widetilde{\delta _{b}}^{k+1}\right) \oplus \ker \left( 
\widetilde{\Delta }^{k}\right) ,
\] 
an $L^{2}$-orthogonal direct sum. Also, $%
\ker \left( \widetilde{\Delta }^{k}\right) $ is finite-dimensional and
consists of smooth forms.
\end{proposition}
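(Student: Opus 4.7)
The plan is to leverage the spectral theory of the basic de Rham operator $D_b = \widetilde{d} + \widetilde{\delta}$, which from the previous subsection is known to be essentially self-adjoint with discrete spectrum on the space of basic sections. Since $\widetilde{\Delta}^k = D_b^2$ restricted to basic $k$-forms is non-negative and shares its spectral decomposition with $D_b$, we obtain an $L^2$-orthogonal decomposition $\Omega^k(M,\mathcal{F}) = \bigoplus_\mu E_\mu^k$ into finite-dimensional eigenspaces $E_\mu^k$ of $\widetilde{\Delta}^k$, with the $\mu = 0$ eigenspace being $\ker(\widetilde{\Delta}^k)$. Finite-dimensionality of $\ker(\widetilde{\Delta}^k)$ is then immediate, and smoothness follows from elliptic regularity applied to $\widetilde{\Delta}^k$, which is transversally elliptic and acts on basic forms (so one invokes the basic elliptic regularity statement, as in the references \cite{KT1}, \cite{PrRi}, \cite{BKRi2}; more concretely, any harmonic basic form lies in $\ker(D_b)$, and $D_b$ has smooth eigensections).

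Next I would establish the three-fold orthogonality. For $\alpha \in \Omega^{k-1}(M,\mathcal{F})$ and $\beta \in \Omega^{k+1}(M,\mathcal{F})$, the adjointness lemma and the differential property $\widetilde{d}^2 = 0$ give
\begin{equation*}
\langle \widetilde{d}\alpha, \widetilde{\delta}\beta \rangle = \langle \widetilde{d}^2 \alpha, \beta \rangle = 0.
\end{equation*}
For a harmonic $h \in \ker(\widetilde{\Delta}^k)$, the identity $\ker(\widetilde{d}+\widetilde{\delta}) = \ker(\widetilde{\Delta})$ (already established) gives $\widetilde{d}h = 0$ and $\widetilde{\delta}h = 0$, hence $\langle \widetilde{d}\alpha,h\rangle = \langle \alpha, \widetilde{\delta}h\rangle = 0$ and similarly $\langle \widetilde{\delta}\beta, h\rangle = 0$. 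This pairwise orthogonality makes the sum direct.

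To see that the sum exhausts $\Omega^k(M,\mathcal{F})$, I would use the spectral decomposition above. Any $\alpha \in \Omega^k(M,\mathcal{F})$ orthogonal to $\ker(\widetilde{\Delta}^k)$ decomposes as $\alpha = \sum_{\mu > 0} \alpha_\mu$ with $\alpha_\mu \in E_\mu^k$. Since $\widetilde{d}$ and $\widetilde{\delta}$ commute with $\widetilde{\Delta}$, they preserve each $E_\mu$, so on each eigenspace we may use the key identity
\begin{equation*}
\alpha_\mu = \frac{1}{\mu}\widetilde{\Delta}\alpha_\mu = \widetilde{d}\Bigl(\frac{\widetilde{\delta}\alpha_\mu}{\mu}\Bigr) + \widetilde{\delta}\Bigl(\frac{\widetilde{d}\alpha_\mu}{\mu}\Bigr),
\end{equation*}
placing $\alpha_\mu$ in $\mathrm{image}(\widetilde{d}^{k-1}) + \mathrm{image}(\widetilde{\delta}^{k+1})$. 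Summing and using closedness of these images (which follows from the spectral gap above zero, since the Green's operator $G$ inverse to $\widetilde{\Delta}$ on $(\ker\widetilde{\Delta}^k)^\perp$ is bounded), one concludes $\alpha - h \in \mathrm{image}(\widetilde{d}^{k-1}) \oplus \mathrm{image}(\widetilde{\delta}^{k+1})$, where $h$ is the harmonic projection.

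The main obstacle is not the algebra, which is identical to the classical Hodge decomposition, but rather justifying the spectral framework on basic forms: showing that eigenforms are smooth, that the spectrum is discrete, and that the images in question are closed. These are nontrivial facts because $D_b$ is only transversally elliptic on $M$, not elliptic, so one cannot invoke standard elliptic theory on $M$ directly. I would quote the discrete spectrum and regularity results for $D_b$ on basic sections from \cite{GlK}, \cite{DGKY}, \cite{BKRi2}, which were already cited in Subsection \ref{basicDiracSection}; these rely on the observation that on basic sections $D_b$ can effectively be treated as an elliptic operator on the leaf space, making the Rellich and regularity lemmas available.
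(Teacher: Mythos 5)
Your proof is correct and follows the standard spectral Hodge theory argument. The paper gives no explicit proof of this proposition, relying implicitly on precisely the ingredients you assemble — the cited discrete spectrum and smooth eigensections of $D_{b}$ on basic sections, the adjointness of $\widetilde{\delta}$ to $\widetilde{d}$, the fact that $\widetilde{d}^{2}=\widetilde{\delta}^{2}=0$, and the Green's operator on $(\ker\widetilde{\Delta}^{k})^{\perp}$ — so yours is the intended argument. (One small expository note: the cleaner logical order is to first use the Green's operator $G$ to write $\alpha-h=\widetilde{d}(\widetilde{\delta}G(\alpha-h))+\widetilde{\delta}(\widetilde{d}G(\alpha-h))$, which already shows the sum exhausts $\Omega^{k}(M,\mathcal{F})$, and then deduce closedness of each image as a corollary of the resulting orthogonal decomposition rather than citing closedness as a prerequisite.)
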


We call $\ker \left( \widetilde{\Delta }\right) $ the space $\widetilde{%
\Delta }$-harmonic forms. In the remainder of this section, we assume that
the foliation is transversally oriented so that the transversal Hodge $%
\overline{\ast }$ operator is well-defined.

\begin{definition}
We define the basic $\widetilde{d}$-cohomology $\widetilde{H}^{\ast }\left(
M,\mathcal{F}\right) $ by 
\begin{equation*}
\widetilde{H}^{k}\left( M,\mathcal{F}\right) =\frac{\ker \widetilde{d}^{k}}{%
\mathrm{image}~\widetilde{d}^{k-1}}.
\end{equation*}
\end{definition}

The following proposition follows from standard arguments and the Hodge
theorem (Theorem \ref{Hodge Theorem}).

\begin{proposition}
The finite-dimensional vector spaces $\widetilde{H}^{k}\left( M,\mathcal{F}%
\right) $ and $\ker $ $\widetilde{\Delta }^{k}=\ker \left( \widetilde{d}+%
\widetilde{\delta }\right) ^{k}$ are naturally isomorphic.
\end{proposition}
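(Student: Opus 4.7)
The plan is to construct the natural isomorphism by sending a $\widetilde{\Delta}$-harmonic basic form to its $\widetilde{d}$-cohomology class, and then to verify bijectivity using the Hodge decomposition in Proposition \ref{Hodge Theorem} together with the differential property $\widetilde{d}^{2}=0$ and the adjointness of $\widetilde{\delta}$ with $\widetilde{d}$ from the two preceding lemmas.

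First I would show that the assignment $\Phi : \ker\widetilde{\Delta}^{k} \to \widetilde{H}^{k}(M,\mathcal{F})$, $h \mapsto [h]$, is well-defined. Since $\ker\widetilde{\Delta}^{k} = \ker(\widetilde{d}+\widetilde{\delta})^{k}$ and $\widetilde{d}$ and $\widetilde{\delta}$ raise and lower degree respectively, any $h \in \ker\widetilde{\Delta}^{k}$ satisfies $\widetilde{d}h = 0$ and $\widetilde{\delta}h = 0$ separately; in particular $h$ is $\widetilde{d}$-closed and defines a class in $\widetilde{H}^{k}(M,\mathcal{F})$. This is a standard consequence of the fact that $\|\widetilde{d}h\|^{2} + \|\widetilde{\delta}h\|^{2} = \langle \widetilde{\Delta}h, h\rangle = 0$.

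Next I would show injectivity. If $h \in \ker\widetilde{\Delta}^{k}$ and $[h] = 0$, then $h = \widetilde{d}\beta$ for some $\beta \in \Omega^{k-1}(M,\mathcal{F})$. By the Hodge decomposition of Proposition \ref{Hodge Theorem}, the summands $\mathrm{image}(\widetilde{d}^{k-1})$ and $\ker(\widetilde{\Delta}^{k})$ are $L^{2}$-orthogonal, so $\|h\|^{2} = \langle h, \widetilde{d}\beta\rangle = \langle \widetilde{\delta}h, \beta\rangle = 0$, giving $h = 0$.

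For surjectivity, let $[\omega] \in \widetilde{H}^{k}(M,\mathcal{F})$ with $\widetilde{d}\omega = 0$. Applying Proposition \ref{Hodge Theorem}, decompose
\begin{equation*}
\omega = \widetilde{d}\eta + \widetilde{\delta}\gamma + h,
\end{equation*}
with $h \in \ker\widetilde{\Delta}^{k}$. Applying $\widetilde{d}$ and using $\widetilde{d}^{2}=0$ and $\widetilde{d}h = 0$ yields $\widetilde{d}\widetilde{\delta}\gamma = 0$. Taking the $L^{2}$-inner product with $\gamma$ and using that $\widetilde{\delta}$ is the formal adjoint of $\widetilde{d}$ gives $\|\widetilde{\delta}\gamma\|^{2} = \langle \widetilde{d}\widetilde{\delta}\gamma, \gamma\rangle = 0$, so $\widetilde{\delta}\gamma = 0$. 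Hence $\omega - h = \widetilde{d}\eta$ and $\Phi(h) = [h] = [\omega]$. Finite-dimensionality of $\widetilde{H}^{k}(M,\mathcal{F})$ is then inherited from that of $\ker\widetilde{\Delta}^{k}$ stated in Proposition \ref{Hodge Theorem}. There is no serious obstacle here; the only point requiring care is ensuring that all manipulations take place within the space of smooth basic forms, which is guaranteed by the basic Hodge decomposition and the smoothness of harmonic representatives asserted in Proposition \ref{Hodge Theorem}.
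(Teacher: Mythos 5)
Your proof is correct and is precisely the ``standard argument'' the paper invokes without writing out: you define the map $h \mapsto [h]$ on $\widetilde{\Delta}$-harmonic forms, use the identity $\langle \widetilde{\Delta}h,h\rangle = \|\widetilde{d}h\|^{2}+\|\widetilde{\delta}h\|^{2}$ (which relies on $\widetilde{d}^{2}=\widetilde{\delta}^{2}=0$ and the adjointness lemma) to get well-definedness, and then read off injectivity and surjectivity directly from the $L^{2}$-orthogonality of the Hodge decomposition in Proposition~\ref{Hodge Theorem}. This matches the paper's intended route, with the details filled in correctly.
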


We observe that for every choice of bundle-like metric, the differential $%
\widetilde{d}=d-\frac{1}{2}\kappa _{b}\wedge $ changes, and thus the
cohomology groups change. However, note that $\kappa _{b}$ is the only part
that changes; for any two bundle-like metrics $g_{M}$, $g_{M}^{\prime }$ and
associated $\kappa _{b}$, $\kappa _{b}^{\prime }$ compatible with $\left( M,%
\mathcal{F},g_{Q}\right) $, we have $\kappa _{b}^{\prime }=\kappa _{b}+dh$
for some basic function $h$ (see \cite{Al}). In the proof of the main
theorem in \cite{HabRi}, we essentially showed that the the basic de Rham
operator $D_{b}$ is then transformed by $D_{b}^{\prime
}=e^{h/2}D_{b}e^{-h/2} $. Applying this to our situation, we see that the $%
\left( \ker D_{b}^{\prime }\right) =e^{h/2}\ker D_{b}$, and thus the
cohomology groups are the same dimensions, independent of choices. To see
this in our specific situation, note that if $\alpha \in \Omega^{k}\left( M,%
\mathcal{F}\right) $ satisfies $\widetilde{d}\alpha =0$, then%
\begin{eqnarray*}
\left( \widetilde{d}\right) ^{\prime }\left( e^{h/2}\alpha \right) &=&\left(
d-\frac{1}{2}\kappa _{b}\wedge -\frac{1}{2}dh\wedge \right) \left(
e^{h/2}\alpha \right) \\
&=&e^{h/2}d\alpha +\frac{1}{2}e^{h/2}dh\wedge \alpha -\frac{e^{h/2}}{2}%
\kappa _{b}\wedge \alpha -\frac{e^{h/2}}{2}dh\wedge \alpha \\
&=&e^{h/2}d\alpha -\frac{e^{h/2}}{2}\kappa _{b}\wedge \alpha =e^{h/2}\left(
d-\frac{1}{2}\kappa _{b}\wedge \right) \alpha =e^{h/2}\widetilde{d}\alpha =0.
\end{eqnarray*}%
Similarly, as in \cite{HabRi} one may show $\ker \left( \widetilde{\delta }%
\right) ^{\prime }=e^{h/2}\ker \left( \widetilde{\delta }\right) $, through
a slightly more difficult computation. Thus, we have

\begin{theorem}
(Conformal invariance of cohomology groups) Given a Riemannian foliation $%
\left( M,\mathcal{F},g_{Q}\right) $ and two bundle-like metrics $g_{M}$ and $%
g_{M}^{\prime }$ compatible with $g_{Q}$, the $\widetilde{d}$-cohomology
groups $\widetilde{H}^{k}\left( M,\mathcal{F}\right) $ are isomorphic, and
that isomorphism is implemented by multiplication by a positive basic
function. Further, the eigenvalues of the corresponding basic de Rham
operators $D_{b}$ and $D_{b}^{\prime }$ are identical, and the eigenspaces
are isomorphic via multiplication by that same positive function.

\begin{corollary}
The dimensions of $\widetilde{H}^{k}\left( M,\mathcal{F}\right) $ and the
eigenvalues of $D_{b}$ (and thus of $\widetilde{\Delta }=D_{b}^{2}$) are
invariants of the Riemannian foliation structure $\left( M,\mathcal{F}%
,g_{Q}\right) $, independent of choice of compatible bundle-like metric $%
g_{M}$.
\end{corollary}
\end{theorem}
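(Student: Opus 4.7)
The plan rests on two facts granted earlier in the excerpt: Álvarez's theorem that for any two bundle-like metrics $g_M, g_M'$ compatible with the fixed $(M,\mathcal{F},g_Q)$, the basic mean curvature forms satisfy $\kappa_b' = \kappa_b + dh$ for some basic function $h$; and the resulting identity $D_b' = e^{h/2} D_b e^{-h/2}$ on smooth basic sections, cited from \cite{HabRi}. Since $e^{\pm h/2}$ are smooth positive basic functions, multiplication by either preserves smoothness and basicness and is a bijection of $\Omega^k(M,\mathcal{F})$ to itself. The entire theorem is then an algebraic consequence of these two observations, so the bulk of the plan is simply to package them correctly.

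For the cohomology statement, the key intermediate step is to promote the Dirac-operator conjugation to a differential-level conjugation $\widetilde{d}' = e^{h/2}\widetilde{d}\, e^{-h/2}$ on basic forms. The excerpt already displays the one-line computation: substituting $\kappa_b' = \kappa_b + dh$ into $\widetilde{d}' = d - \tfrac{1}{2}\kappa_b'\wedge$ shows that the $dh$ produced when $d$ crosses $e^{h/2}$ exactly cancels the added $\tfrac{1}{2}dh\wedge$ term. Thus multiplication by $e^{h/2}$ is a degree-preserving cochain isomorphism from $(\Omega^\bullet(M,\mathcal{F}),\widetilde{d})$ to $(\Omega^\bullet(M,\mathcal{F}),\widetilde{d}')$, with inverse given by multiplication by $e^{-h/2}$; passing to cohomology yields the advertised isomorphism $[\alpha]\mapsto [e^{h/2}\alpha]$ between the two versions of $\widetilde{H}^k(M,\mathcal{F})$, implemented by multiplication by the positive basic function $e^{h/2}$.

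For the spectral claim, I would apply the conjugation $D_b' = e^{h/2} D_b e^{-h/2}$ directly on smooth basic sections, where, by the discreteness of the spectrum and elliptic regularity recalled in Section \ref{basicDiracSection}, all eigensections live. If $D_b\alpha = \lambda\alpha$, then
\[
D_b'(e^{h/2}\alpha) = e^{h/2} D_b e^{-h/2}(e^{h/2}\alpha) = e^{h/2} D_b \alpha = \lambda\, e^{h/2}\alpha,
\]
so multiplication by $e^{h/2}$ injects the $\lambda$-eigenspace of $D_b$ into the $\lambda$-eigenspace of $D_b'$. Interchanging the roles of $g_M$ and $g_M'$ replaces $h$ by $-h$ and supplies the inverse, giving an isomorphism of eigenspaces and forcing the spectra to agree as sets with multiplicities. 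The corollary is then immediate: dimensions of $\widetilde{H}^k(M,\mathcal{F})$ and eigenvalues of $D_b$ survive an arbitrary change of compatible bundle-like metric and hence are invariants of $(M,\mathcal{F},g_Q)$ alone. The only genuinely hard step is the one already discharged in \cite{HabRi}: proving that $D_b' = e^{h/2} D_b e^{-h/2}$ despite the fact that changing $g_M$ simultaneously alters the horizontal distribution $N\mathcal{F}$, the transversal Dirac operator $A_{N\mathcal{F}}$, and the basic projection $P_b$. Once that identity is cited, everything above is formal.
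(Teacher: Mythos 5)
Your proposal is correct and follows essentially the same route as the paper: it cites \'{A}lvarez's result $\kappa_b'=\kappa_b+dh$ and the conjugation identity $D_b'=e^{h/2}D_b e^{-h/2}$ from \cite{HabRi}, then uses the chain-level computation $\widetilde{d}'(e^{h/2}\alpha)=e^{h/2}\widetilde{d}\alpha$ for the cohomology isomorphism and the operator conjugation for the equality of spectra and eigenspaces. Nothing essential is missing or different.
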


\begin{exercise}
Show that if $\alpha $ is any closed form, then $\left( d+\alpha \wedge
\right) ^{2}=0$.
\end{exercise}

\begin{exercise}
Show that $\widetilde{\delta }$ is the formal adjoint of $\widetilde{d}$.
\end{exercise}

\begin{exercise}
Show that a Riemannian foliation $\left( M,\mathcal{F}\right) $ is taut if
and only if $\widetilde{H}^{0}\left( M,\mathcal{F}\right) $ is nonzero. (A
Riemannian foliation is \textbf{taut} if there exists a bundle-like metric
for which the leaves are minimal submanifolds and thus have zero mean
curvature.)
\end{exercise}

\begin{exercise}
(This example is contained in \cite{HabRi2}.) \label{CarExampleExercise}This
Riemannian foliation is the famous Carri\`{e}re example from \cite{Car} in
the $3$-dimensional case. Let $A$ be a matrix in $\mathrm{SL}_{2}(\mathbb{Z}%
) $ of trace strictly greater than $2$. We denote respectively by $V_{1}$
and $V_{2}$ the eigenvectors associated with the eigenvalues $\lambda $ and $%
\frac{1}{\lambda }$ of $A$ with $\lambda >1$ irrational. Let the hyperbolic
torus $\mathbb{T}_{A}^{3}$ be the quotient of $\mathbb{T}^{2}\times \mathbb{R%
}$ by the equivalence relation which identifies $(m,t)$ to $(A(m),t+1)$. The
flow generated by the vector field $V_{2}$ is a transversally Lie foliation
of the affine group. We denote by $K$ the holonomy subgroup. The affine
group is the Lie group $\mathbb{R}^{2}$ with multiplication $%
(t,s).(t^{\prime },s^{\prime })=(t+t^{\prime },\lambda ^{t}s^{\prime }+s)$,
and the subgroup $K$ is 
\begin{equation*}
K=\{(n,s),n\in \mathbb{Z},s\in \mathbb{R}\}.
\end{equation*}%
We choose the bundle-like metric (letting $\left( x,s,t\right) $ denote the
local coordinates in the $V_{2}$ direction, $V_{1}$ direction, and $\mathbb{R%
}$ direction, respectively) as 
\begin{equation*}
g=\lambda ^{-2t}dx^{2}+\lambda ^{2t}ds^{2}+dt^{2}.
\end{equation*}%
Prove that:

\begin{itemize}
\item The mean curvature of the flow is $\kappa =\kappa _{b}=\log \left(
\lambda \right) dt$.

\item The twisted basic cohomology groups are all trivial.

\item The ordinary basic cohomology groups satisfy $H^{0}\left( M,\mathcal{F}%
\right) \cong \mathbb{R}$ , $H^{1}\left( M,\mathcal{F}\right) \cong \mathbb{R%
}$ , $H^{2}\left( M,\mathcal{F}\right) \cong \left\{ 0\right\} $.

\item The flow is not taut.
\end{itemize}
\end{exercise}

\subsection{Poincar\'{e} duality and consequences}

\begin{theorem}
(Poincar\'{e} duality for $\widetilde{d}$-cohomology) Suppose that the
Riemannian foliation $\left( M,\mathcal{F},g_{Q}\right) $ is transversally
oriented and is endowed with a bundle-like metric. For each $k$ such that $%
0\leq k\leq q$ and any compatible choice of bundle-like metric, the map $%
\overline{\ast }:\Omega^{k}\left( M,\mathcal{F}\right) \rightarrow
\Omega^{q-k}\left( M,\mathcal{F}\right) $ induces an isomorphism on the $%
\widetilde{d}$-cohomology. Moreover, $\overline{\ast }$ maps the $\ker 
\widetilde{\Delta }^{k}$ isomorphically onto $\ker \widetilde{\Delta }^{q-k}$%
, and it maps the $\lambda $-eigenspace of $\widetilde{\Delta }^{k}$
isomorphically onto the $\lambda $-eigenspace of $\widetilde{\Delta }^{q-k}$%
, for all $\lambda \geq 0$.
\end{theorem}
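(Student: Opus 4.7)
The plan is to exploit two identities recorded in the exercises of the preceding subsection: the square identity $\overline{\ast}^2 = (-1)^{k(q-k)}$ on basic $k$-forms, and the intertwining $\overline{\ast}\widetilde{d} = \pm \widetilde{\delta}\,\overline{\ast}$. Together these will force $\overline{\ast}$ to conjugate $\widetilde{\Delta}^k$ into $\widetilde{\Delta}^{q-k}$, from which both the eigenspace assertion and the cohomology assertion follow mechanically.

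First I would derive the companion relation $\overline{\ast}\widetilde{\delta} = \pm\,\widetilde{d}\,\overline{\ast}$. This can be produced by conjugating the given identity $\overline{\ast}\widetilde{d} = \epsilon\,\widetilde{\delta}\,\overline{\ast}$ on both sides by $\overline{\ast}$ and using $\overline{\ast}^2 = \pm 1$ to track the sign, or equivalently by taking $L^2$-adjoints of the first relation on basic forms (where $\overline{\ast}$ is essentially an isometry). Substituting both relations into $\widetilde{\Delta} = \widetilde{d}\widetilde{\delta} + \widetilde{\delta}\widetilde{d}$ produces a relation of the form $\overline{\ast}\widetilde{\Delta}^k = c\,\widetilde{\Delta}^{q-k}\,\overline{\ast}$ with $c \in \{+1,-1\}$; the nonnegativity of both Laplacians together with the invertibility of $\overline{\ast}$ forces $c = +1$.

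Granted the intertwining $\overline{\ast}\widetilde{\Delta}^k = \widetilde{\Delta}^{q-k}\,\overline{\ast}$, the eigenspace statement is immediate: $\overline{\ast}$ is an invertible linear map $\Omega^k(M,\mathcal{F}) \to \Omega^{q-k}(M,\mathcal{F})$ with two-sided inverse $\pm\overline{\ast}$, and an invertible intertwiner between two self-adjoint operators carries the $\lambda$-eigenspace of the first bijectively onto the $\lambda$-eigenspace of the second. Specializing to $\lambda = 0$ gives $\overline{\ast}\colon \ker\widetilde{\Delta}^k \stackrel{\sim}{\longrightarrow} \ker\widetilde{\Delta}^{q-k}$, and the Hodge decomposition proposition already established for $\widetilde{d}$ identifies these kernels with $\widetilde{H}^k(M,\mathcal{F})$ and $\widetilde{H}^{q-k}(M,\mathcal{F})$, respectively, yielding the isomorphism on basic $\widetilde{d}$-cohomology.

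The main obstacle I expect is the sign bookkeeping in the first step, specifically checking that the signs arising from pushing $\overline{\ast}$ through $\widetilde{d}\widetilde{\delta}$ and through $\widetilde{\delta}\widetilde{d}$ combine into the same overall sign on $\widetilde{\Delta}$; this should reduce to a parity calculation in the exponents of $(-1)$ that matches the sign convention already built into the transverse Hodge star and its companion $\bigstar$-type operator. Once that is in place, nothing further is required beyond the Hodge decomposition.
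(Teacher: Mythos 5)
Your proposal is correct and is clearly the argument the exercises in the preceding subsection are designed to produce: the two identities $\overline{\ast}^2 = (-1)^{k(q-k)}$ and $\overline{\ast}\widetilde{d} = \pm\widetilde{\delta}\,\overline{\ast}$, together with the Hodge decomposition for $\widetilde{\Delta}$, give the theorem exactly as you describe, and the nonnegativity trick to pin the sign is a clean way to avoid the parity computation (which in fact works out: $\overline{\ast}\widetilde{d} = (-1)^{k+1}\widetilde{\delta}\,\overline{\ast}$ and $\overline{\ast}\widetilde{\delta} = (-1)^{k}\widetilde{d}\,\overline{\ast}$ on $k$-forms, so the products of signs on each of the two terms of $\widetilde{\Delta}$ both equal $+1$). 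The paper itself states the theorem without proof, deferring to \cite{HabRi2}, so there is no alternative argument to compare against; your reconstruction matches the intended route.
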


This resolves the problem of the failure of Poincar\'{e} duality to hold for
standard basic cohomology (see \cite{KTduality}, \cite{To}).

\begin{corollary}
\label{OddCodimZeroEuler}Let $\left( M,\mathcal{F}\right) $ be a smooth
transversally oriented foliation of odd codimension that admits a transverse
Riemannian structure. Then the Euler characteristic associated to the $%
\widetilde{H}^{\ast }\left( M,\mathcal{F}\right) $ vanishes.
\end{corollary}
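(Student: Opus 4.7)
The plan is to deduce the corollary as an immediate consequence of the Poincar\'e duality theorem for $\widetilde{d}$-cohomology just stated. First I would fix any bundle-like metric compatible with the transverse Riemannian structure $g_Q$; such a metric exists by the hypothesis that $(M,\mathcal{F})$ admits a Riemannian foliation structure, and the transverse orientation ensures that the transversal Hodge star $\overline{\ast}$ is globally defined. By the previous corollary, the dimensions of the groups $\widetilde{H}^k(M,\mathcal{F})$ do not depend on the choice of bundle-like metric, so the Euler characteristic
\begin{equation*}
\widetilde{\chi}(M,\mathcal{F}) \;=\; \sum_{k=0}^{q} (-1)^k \dim \widetilde{H}^k(M,\mathcal{F})
\end{equation*}
is well-defined.

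Next I would apply the Poincar\'e duality theorem to conclude that, for each $0 \leq k \leq q$, the transversal Hodge star induces an isomorphism
\begin{equation*}
\overline{\ast} : \widetilde{H}^k(M,\mathcal{F}) \xrightarrow{\ \cong\ } \widetilde{H}^{q-k}(M,\mathcal{F}),
\end{equation*}
so that $\dim \widetilde{H}^k(M,\mathcal{F}) = \dim \widetilde{H}^{q-k}(M,\mathcal{F})$.

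The final step is a sign count. Using the symmetry $k \mapsto q-k$ in the alternating sum, and the fact that $q$ is odd so that $(-1)^{q-k} = -(-1)^k$, I would pair the term of degree $k$ with the term of degree $q-k$ and obtain
\begin{equation*}
\widetilde{\chi}(M,\mathcal{F}) \;=\; \sum_{k=0}^{q} (-1)^k \dim \widetilde{H}^{q-k}(M,\mathcal{F}) \;=\; -\sum_{k=0}^{q} (-1)^{q-k} \dim \widetilde{H}^{q-k}(M,\mathcal{F}) \;=\; -\widetilde{\chi}(M,\mathcal{F}),
\end{equation*}
forcing $\widetilde{\chi}(M,\mathcal{F}) = 0$.

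There is no real obstacle here; the content is entirely in the Poincar\'e duality theorem cited above, and the corollary is just the standard odd-dimensional Euler characteristic cancellation argument transplanted to the $\widetilde{d}$-cohomology setting. The only thing worth being careful about is to verify that the Euler characteristic is intrinsic to $(M,\mathcal{F},g_Q)$, which is guaranteed by the conformal invariance corollary; otherwise one might worry that the statement depends on an auxiliary choice of bundle-like metric.
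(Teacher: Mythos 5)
Your proof is correct and is exactly the argument the paper has in mind: the corollary is stated immediately after the Poincar\'e duality theorem for $\widetilde{d}$-cohomology, with no separate proof given, precisely because it follows by the standard odd-codimension cancellation $\dim\widetilde{H}^{k}=\dim\widetilde{H}^{q-k}$ combined with $(-1)^{q-k}=-(-1)^{k}$. Your extra remark that the Euler characteristic is intrinsic to $(M,\mathcal{F},g_{Q})$ by the conformal-invariance corollary is a sensible sanity check, though it is not strictly needed once a bundle-like metric is fixed.
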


The following fact is a new result for ordinary basic cohomology of
Riemannian foliations. Ordinary basic cohomology does not satisfy Poincar%
\'{e} duality; in fact, the top-dimensional basic cohomology group is zero
if and only if the foliation is not taut. Also, leaf closures of a
transversally oriented foliation can fail to be transversally oriented, so
orientation is also a tricky issue.

\begin{corollary}
Let $\left( M,\mathcal{F}\right) $ be a smooth transversally oriented
foliation of odd codimension that admits a transverse Riemannian structure.
Then the Euler characteristic associated to the ordinary basic cohomology $%
H^{\ast }\left( M,\mathcal{F}\right) $ vanishes.
\end{corollary}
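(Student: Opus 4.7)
The proof plan is to connect the ordinary basic Euler characteristic to that of the twisted $\widetilde{d}$-cohomology, where the vanishing has already been established in Corollary \ref{OddCodimZeroEuler}. Specifically, I would show that
\begin{equation*}
\chi(H^{\ast}(M,\mathcal{F}))=\chi(\widetilde{H}^{\ast}(M,\mathcal{F}))
\end{equation*}
by interpolating between the ordinary basic de Rham operator $d+\delta_{b}$ and the basic Dirac operator $D_{b}=\widetilde{d}+\widetilde{\delta}$ through a one-parameter family of first-order operators on basic forms whose Fredholm index is constant.

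The construction is as follows. Set $d_{t}=d-\frac{t}{2}\kappa_{b}\wedge$ and let $d_{t}^{\ast}=\delta_{b}-\frac{t}{2}\kappa_{b}\lrcorner$ be its formal $L^{2}$-adjoint on basic forms. Because $\kappa_{b}$ is closed (as shown in the exercise on transversal Hodge identities), the preceding exercise gives $d_{t}^{2}=0$ for every $t\in[0,1]$, so $d_{t}$ is a differential on $\Omega^{\ast}(M,\mathcal{F})$ with associated cohomology $H_{t}^{\ast}(M,\mathcal{F})$; at $t=0$ this recovers $H^{\ast}(M,\mathcal{F})$ and at $t=1$ it is $\widetilde{H}^{\ast}(M,\mathcal{F})$. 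Let $D_{t}=d_{t}+d_{t}^{\ast}$ act on the $\mathbb{Z}/2$-graded space of basic forms, and split $D_{t}=D_{t}^{+}\oplus D_{t}^{-}$ with $D_{t}^{+}:\Omega_{b}^{\mathrm{even}}\to\Omega_{b}^{\mathrm{odd}}$. Since $D_{t}$ is essentially self-adjoint, transversally elliptic on basic forms, and has discrete spectrum, the same basic Hodge argument as in Proposition \ref{Hodge Theorem} gives $\ker D_{t}^{2}\cong H_{t}^{\ast}(M,\mathcal{F})$, so $\mathrm{ind}(D_{t}^{+})=\chi(H_{t}^{\ast}(M,\mathcal{F}))$. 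The evaluations at $t=0$ and $t=1$ therefore give $\chi(H^{\ast}(M,\mathcal{F}))$ and $\chi(\widetilde{H}^{\ast}(M,\mathcal{F}))$ respectively, and Corollary \ref{OddCodimZeroEuler} then finishes the argument.

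The main obstacle is justifying that $\mathrm{ind}(D_{t}^{+})$ is in fact constant in $t$, because $D_{t}$ is only transversally elliptic and is restricted to basic sections, so the standard elliptic Fredholm package does not apply off the shelf. The key observations are that $D_{t}-D_{0}=-\frac{t}{2}c(\kappa_{b}^{\sharp})$ is a zeroth-order bounded self-adjoint perturbation, that all $D_{t}$ share the same principal transverse symbol, and that the space of basic sections is closed in $L^{2}$ (the projector $P_{b}$ from the previous section maps smooth sections onto smooth basic sections). Together with the discreteness of the spectrum of $D_{t}$, these properties place $\{D_{t}^{+}\}$ within the scope of the transversally elliptic Fredholm theory developed in the sections on $D_{b}$, and the Fredholm index is then a homotopy invariant.

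If the Fredholm formalism feels cumbersome to set up, an equivalent route is via McKean--Singer: write $\chi(H_{t}^{\ast}(M,\mathcal{F}))=\mathrm{str}(e^{-sD_{t}^{2}})$ using the $\mathbb{Z}/2$-supertrace on basic forms (valid because $D_{t}$ is odd with respect to the grading and has discrete spectrum), then differentiate in $t$. Since $\tfrac{d}{dt}D_{t}$ and $D_{t}$ are both odd, the anticommutator identity $\mathrm{str}(\{A,B\})=0$ for odd $A,B$ shows that the $t$-derivative vanishes, giving the desired invariance of the supertrace and hence of the Euler characteristic.
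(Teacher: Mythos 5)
Your proposal is correct and takes essentially the same route as the paper's proof: the paper also introduces the family $D_{t}=d+\delta _{b}-\frac{t}{2}\kappa _{b}\lrcorner -\frac{t}{2}\kappa _{b}\wedge$ acting from even to odd basic forms, invokes that the basic Fredholm index is invariant under homotopies through transversally elliptic operators preserving basic sections, and concludes from Corollary \ref{OddCodimZeroEuler}. The extra detail you supply (that each $d_{t}$ squares to zero and $\mathrm{ind}(D_{t}^{+})=\chi(H_{t}^{\ast})$ for all $t$, plus the McKean--Singer alternative) is correct but not needed for the argument, which only requires the endpoint identifications.
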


\begin{proof}
The basic Euler characteristic is the basic index of the operator $%
D_{0}=d+\delta _{B}:\Omega ^{\mathrm{even}}\left( M,\mathcal{F}\right)
\rightarrow \Omega ^{\mathrm{odd}}\left( M,\mathcal{F}\right) $. See \cite%
{BrKRi1}, \cite{DGKY}, \cite{BePaRi}, \cite{EK} for information on the basic
index and basic Euler characteristic. The crucial property for us is that
the basic index of $D_{0}$ is a Fredholm index and is invariant under
perturbations of the operator through transversally elliptic operators that
map the basic forms to themselves. In particular, the family of operators $%
D_{t}=d+\delta _{b}-\frac{t}{2}\kappa _{b}\lrcorner -\frac{t}{2}\kappa
_{b}\wedge $ for $0\leq t\leq 1$ meets that criteria, and $D_{1}=D_{b}$ is
the basic de Rham operator $D_{b}:\Omega ^{\mathrm{even}}\left( M,\mathcal{F}%
\right) \rightarrow \Omega ^{\mathrm{odd}}\left( M,\mathcal{F}\right) $.
Thus, the basic Euler characteristic of the basic cohomology complex is the
same as the basic Euler characteristic of the $\widetilde{d}$-cohomology
complex. The result follows from the previous corollary.
\end{proof}

\begin{exercise}
Prove that the twisted cohomology class $\left[ \kappa _{b}\right] $ is
always trivial.
\end{exercise}

\begin{exercise}
Prove that if $\left( M,\mathcal{F}\right) $ is not taut, then the ordinary
basic cohomology satisfies $\dim H^{1}\left( M,\mathcal{F}\right) \geq 1$.
\end{exercise}

\begin{exercise}
Prove that there exists a monomorphism from $H^{1}\left( M,\mathcal{F}%
\right) $ to $H^{1}\left( M\right) $.
\end{exercise}

\begin{exercise}
True or false: $\dim H^{2}\left( M,\mathcal{F}\right) \leq \dim H^{2}\left(
M\right) $.
\end{exercise}

\begin{exercise}
Under what conditions is it true that $\dim \widetilde{H}^{1}\left( M,%
\mathcal{F}\right) \geq \dim H^{1}\left( M,\mathcal{F}\right) $ ?
\end{exercise}

\begin{exercise}
(Hard) Find an example of a Riemannian foliation that is not taut and whose
twisted basic cohomology is nontrivial.\newline
(If you give up, find an answer in \cite{HabRi2}.)
\end{exercise}

\section{Natural examples of transversal Dirac operators on $G$-manifolds}

The research content of this section is joint work with I. Prokhorenkov,
from \cite{PrRi}.

\subsection{Equivariant structure of the orthonormal frame bundle\label%
{equivariantStructureSection}}

We first make the important observation that if a Lie group acts effectively
by isometries on a Riemannian manifold, then this action can be lifted to a
free action on the orthonormal frame bundle. Given a complete, connected $G$%
-manifold, the action of $g\in G$ on $M$ induces an action of $dg$ on $TM$,
which in turn induces an action of $G$ on the principal $O\left( n\right) $%
-bundle $F_{O}\overset{p}{\rightarrow }M$ of orthonormal frames over $M$.

\begin{lemma}
\vspace{0in}\label{LiftingLemmaForGroupActions}The action of $G$ on $F_{O}$
is regular, and the isotropy subgroups corresponding to any two points of $%
F_{O}$ are the same.
\end{lemma}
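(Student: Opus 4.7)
The plan is to identify the isotropy subgroup $G_f$ at every $f \in F_O$ with the kernel $K = \{g \in G : g \text{ acts as the identity on } M\}$ of the $G$-action on $M$. Both conclusions of the lemma will then follow: all isotropy subgroups coincide (they all equal $K$), and the action is regular in the sense that every orbit has the same type $G/K$; in particular, when the $G$-action on $M$ is effective, $K = \{e\}$ and the lifted action is free, recovering the motivating statement in the paragraph preceding the lemma.

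The technical heart of the argument will be the following rigidity property of isometries, which I would establish first: if $g \in G$ fixes a point $p \in M$ and satisfies $dg_p = \mathrm{id}_{T_p M}$, then $g \in K$. In a normal neighborhood of $p$, the intertwining identity $g \circ \exp_p = \exp_p \circ dg_p$ forces $g$ to equal the identity on that neighborhood, so the set
\[
F = \{q \in M : g \cdot q = q \text{ and } dg_q = \mathrm{id}_{T_q M}\}
\]
contains an open neighborhood of $p$. The same exponential map argument applied at each $q \in F$ shows $F$ is open, and $F$ is closed by continuity of $q \mapsto g \cdot q$ and $q \mapsto dg_q$. Since $M$ is connected and $F \neq \emptyset$, we conclude $F = M$, so $g \in K$.

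With this rigidity fact in hand, the lemma follows quickly. A point of $F_O$ is a pair $f = (p, (e_1, \ldots, e_n))$ with $(e_j)$ an orthonormal basis of $T_p M$, and the lifted action sends this frame to $(g \cdot p, (dg_p e_1, \ldots, dg_p e_n))$. Clearly $K \subseteq G_f$. Conversely, if $g \in G_f$, then $g \cdot p = p$ and $dg_p$ fixes the basis $(e_j)$, so $dg_p = \mathrm{id}_{T_p M}$, and the rigidity fact places $g$ in $K$. Hence $G_f = K$ for every $f \in F_O$, which is exactly the statement that any two isotropy subgroups of the action on $F_O$ are the same; the regularity of the action is the resulting constancy of orbit type.

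The main obstacle will be the rigidity fact itself, and within that only the passage from the local statement "$g$ is the identity near $p$" to the global one "$g$ is the identity on all of $M$" is substantive. That step uses connectedness of $M$ (in the hypothesis) and the smooth dependence of $g \cdot q$ and $dg_q$ on $q$; completeness is not strictly required for the open-closed argument but is part of the standing hypothesis and makes $\exp_p$ behave pleasantly.
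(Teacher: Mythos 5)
Your proposal is correct and follows essentially the same approach as the paper: both arguments reduce to the rigidity fact that an isometry of a connected Riemannian manifold fixing a point and acting as the identity on the tangent space there must be globally the identity, established via the exponential-map intertwining and a connectedness (open--closed) argument, and then identify each isotropy subgroup in $F_O$ with the kernel of the $G$-action on $M$. Your write-up is somewhat more explicit in formulating the open--closed argument than the paper's brief sketch, but the underlying idea is identical.
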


\begin{proof}
Let $H$ be the isotropy subgroup of a frame $f\in F_{O}$. Then $H$ also
fixes $p\left( f\right) \in M$, and since $H$ fixes the frame, its
differentials fix the entire tangent space at $p\left( f\right) $. Since it
fixes the tangent space, every element of $H$ also fixes every frame in $%
p^{-1}\left( p\left( f\right) \right) $; thus every frame in a given fiber
must have the same isotropy subgroup. Since the elements of $H$ map
geodesics to geodesics and preserve distance, a neighborhood of $p\left(
f\right) $ is fixed by $H$. Thus, $H$ is a subgroup of the isotropy subgroup
at each point of that neighborhood. Conversely, if an element of $G$ fixes a
neighborhood of a point $x$ in $M$, then it fixes all frames in $%
p^{-1}\left( x\right) $, and thus all frames in the fibers above that
neighborhood. Since $M$ is connected, we may conclude that every point of $%
F_{O}$ has the same isotropy subgroup $H$, and $H$ is the subgroup of $G$
that fixes every point of $M$.
\end{proof}

\begin{remark}
Since this subgroup $H$ is normal, we often reduce the group $G$ to the
group $G/H$ so that our action is effective, in which case the isotropy
subgroups on $F_{O}$ are all trivial.
\end{remark}

\begin{remark}
A similar idea is also useful in constructing the lifted foliation and the
basic manifold associated to a Riemannian foliation (see \cite{Mo}).
\end{remark}

In any case, the $G$ orbits on $F_{O}$ are diffeomorphic and form a
Riemannian fiber bundle, in the natural metric on $F_{O}$ defined as
follows. The Levi-Civita connection on $M$ determines the horizontal
subbundle $\mathcal{H}$ of $TF_{O}$. We construct the local product metric
on $F_{O}$ using a biinvariant fiber metric and the pullback of the metric
on $M$ to $\mathcal{H}$; with this metric, $F_{O}$ is a compact Riemannian $%
G\times O\left( n\right) $-manifold. The lifted $G$-action commutes with the 
$O\left( n\right) $-action. Let $\mathcal{F}$ denote the foliation of $G$%
-orbits on $F_{O}$, and observe that $F_{O}\overset{\pi }{\rightarrow }%
F_{O}\diagup G=F_{O}\diagup \mathcal{F}$ is a Riemannian submersion of
compact $O\left( n\right) $-manifolds.

Let $E\rightarrow F_{O}$ be a Hermitian vector bundle that is equivariant
with respect to the $G\times O\left( n\right) $ action. Let $\rho
:G\rightarrow U\left( V_{\rho }\right) $ and $\sigma :O\left( n\right)
\rightarrow U\left( W_{\sigma }\right) $ be irreducible unitary
representations. We define the bundle $\mathcal{E}^{\sigma }\rightarrow M$
by 
\begin{equation*}
\mathcal{E}_{x}^{\sigma }=\Gamma \left( p^{-1}\left( x\right) ,E\right)
^{\sigma },
\end{equation*}%
where the superscript $\sigma $ is defined for a $O\left( n\right) $-module $%
Z$ by%
\begin{equation*}
Z^{\sigma }=\mathrm{eval}\left( \mathrm{Hom}_{O\left( n\right) }\left(
W_{\sigma },Z\right) \otimes W_{\sigma }\right) ,
\end{equation*}%
where $\mathbb{\mathrm{eval}}:\mathrm{Hom}_{O\left( n\right) }\left(
W_{\sigma },Z\right) \otimes W_{\sigma }\rightarrow Z$ is the evaluation map 
$\phi \otimes w\mapsto \phi \left( w\right) $. The space $Z^{\sigma }$ is
the vector subspace of $Z$ on which $O\left( n\right) $ acts as a direct sum
of representations of type $\sigma $. The bundle $\mathcal{E}^{\sigma }$ is
a Hermitian $G$-vector bundle of finite rank over $M$. The metric on $%
\mathcal{E}^{\sigma }$ is chosen as follows. For any $v_{x}$,$w_{x}\in 
\mathcal{E}_{x}^{\sigma }$, we define%
\begin{equation*}
\,\left\langle v_{x},w_{x}\right\rangle :=\int_{p^{-1}\left( x\right)
}\left\langle v_{x}\left( y\right) ,w_{x}\left( y\right) \right\rangle
_{y,E}~d\mu _{x}\left( y\right) ,
\end{equation*}%
where $d\mu _{x}$ is the measure on $p^{-1}\left( x\right) $ induced from
the metric on $F_{O}$. See \cite{BrKRi1} for a similar construction.

Similarly, we define the bundle $\mathcal{T}^{\rho }\rightarrow F_{O}\diagup
G$ by%
\begin{equation*}
\mathcal{T}_{y}^{\rho }=\Gamma \left( \pi ^{-1}\left( y\right) ,E\right)
^{\rho },
\end{equation*}%
and $\mathcal{T}^{\rho }\rightarrow F_{O}\diagup G$ is a Hermitian $O\left(
n\right) $-equivariant bundle of finite rank. The metric on $\mathcal{T}%
^{\rho }$ is%
\begin{equation*}
\left\langle v_{z},w_{z}\right\rangle :=\int_{\pi ^{-1}\left( y\right)
}\left\langle v_{z}\left( y\right) ,w_{z}\left( y\right) \right\rangle
_{z,E}~dm_{z}\left( y\right) ,
\end{equation*}%
where $dm_{z}$ is the measure on $\pi ^{-1}\left( z\right) $ induced from
the metric on $F_{O}$.

The vector spaces of sections $\Gamma \left( M,\mathcal{E}^{\sigma }\right) $
and $\Gamma \left( F_{O},E\right) ^{\sigma }$ can be identified via the
isomorphism%
\begin{equation*}
i_{\sigma }:\Gamma \left( M,\mathcal{E}^{\sigma }\right) \rightarrow \Gamma
\left( F_{O},E\right) ^{\sigma },
\end{equation*}%
where for any section $s\in \Gamma \left( M,\mathcal{E}^{\sigma }\right) $, $%
s\left( x\right) \in \Gamma \left( p^{-1}\left( x\right) ,E\right) ^{\sigma
} $ for each $x\in M$, and we let%
\begin{equation*}
i_{\sigma }\left( s\right) \left( f_{x}\right) :=\left. s\left( x\right)
\right\vert _{f_{x}}
\end{equation*}%
for every $f_{x}\in p^{-1}\left( x\right) \subset F_{O}$. Then $i_{\sigma
}^{-1}:\Gamma \left( F_{O},E\right) ^{\sigma }\rightarrow \Gamma \left( M,%
\mathcal{E}^{\sigma }\right) $ is given by%
\begin{equation*}
i_{\sigma }^{-1}\left( u\right) \left( x\right) =\left. u\right\vert
_{p^{-1}\left( x\right) }.
\end{equation*}%
Observe that $i_{\sigma }:\Gamma \left( M,\mathcal{E}^{\sigma }\right)
\rightarrow \Gamma \left( F_{O},E\right) ^{\sigma }$ extends to an $L^{2}$
isometry. Given $u,v\in $ $\Gamma \left( M,\mathcal{E}^{\sigma }\right) $,%
\begin{eqnarray*}
\left\langle u,v\right\rangle _{M} &=&\int_{M}\left\langle
u_{x},v_{x}\right\rangle ~dx=\int_{M}\int_{p^{-1}\left( x\right)
}\left\langle u_{x}\left( y\right) ,v_{x}\left( y\right) \right\rangle
_{y,E}~d\mu _{x}\left( y\right) ~dx \\
&=&\int_{M}\left( \int_{p^{-1}\left( x\right) }\left\langle i_{\sigma
}\left( u\right) ,i_{\sigma }\left( v\right) \right\rangle _{E}~d\mu
_{x}\left( y\right) \right) ~dx \\
&=&\int_{F_{O}}\left\langle i_{\sigma }\left( u\right) ,i_{\sigma }\left(
v\right) \right\rangle _{E}~=\left\langle i_{\sigma }\left( u\right)
,i_{\sigma }\left( v\right) \right\rangle _{F_{O}},
\end{eqnarray*}%
where $dx$ is the Riemannian measure on $M$; we have used the fact that $p$
is a Riemannian submersion. Similarly, we let%
\begin{equation*}
j_{\rho }:\Gamma \left( F_{O}\diagup G,\mathcal{T}^{\rho }\right)
\rightarrow \Gamma \left( F_{O},E\right) ^{\rho }
\end{equation*}%
be the natural identification, which extends to an $L^{2}$ isometry.

Let%
\begin{equation*}
\Gamma \left( M,\mathcal{E}^{\sigma }\right) ^{\alpha }=\mathrm{eval}\left( 
\mathrm{Hom}_{G}\left( V_{\alpha },\Gamma \left( M,\mathcal{E}^{\sigma
}\right) \right) \otimes V_{\alpha }\right) .
\end{equation*}%
Similarly, let%
\begin{equation*}
\Gamma \left( F_{O}\diagup G,\mathcal{T}^{\rho }\right) ^{\beta }=\mathrm{%
eval}\left( \mathrm{Hom}_{G}\left( W_{\beta },\Gamma \left( F_{O}\diagup G,%
\mathcal{T}^{\rho }\right) \right) \otimes W_{\beta }\right) .
\end{equation*}

\begin{theorem}
\label{IsomorphismsOfSectionsTheorem}For any irreducible representations $%
\rho :G\rightarrow U\left( V_{\rho }\right) $ and $\sigma :O\left( n\right)
\rightarrow U\left( W_{\sigma }\right) $, the map $j_{\rho }^{-1}\circ
i_{\sigma }:\Gamma \left( M,\mathcal{E}^{\sigma }\right) ^{\rho }\rightarrow
\Gamma \left( F_{O}\diagup G,\mathcal{T}^{\rho }\right) ^{\sigma }$ is an
isomorphism (with inverse $i_{\sigma }^{-1}\circ j_{\rho }$) that extends to
an $L^{2}$-isometry.
\end{theorem}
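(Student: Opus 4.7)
The plan is to exploit the fact that the commuting $G \times O(n)$-action on $F_O$ induces a commuting action on $\Gamma(F_O,E)$, so the $G$- and $O(n)$-isotypic decompositions are compatible: the double isotypic subspace
\begin{equation*}
\Gamma(F_O,E)^{\sigma,\rho} := \bigl(\Gamma(F_O,E)^{\sigma}\bigr)^{\rho} = \bigl(\Gamma(F_O,E)^{\rho}\bigr)^{\sigma}
\end{equation*}
is well-defined, since the $G$-action preserves $\Gamma(F_O,E)^{\sigma}$ and the $O(n)$-action preserves $\Gamma(F_O,E)^{\rho}$. The strategy is then to show that $i_{\sigma}$ and $j_{\rho}$ are equivariant for the actions they do not already encode, and to funnel everything through this common subspace.

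First I would observe that $i_{\sigma}$ is $G$-equivariant. Indeed, the bundle $\mathcal{E}^{\sigma}\to M$ is a $G$-equivariant Hermitian bundle by construction (the $G$-action on $F_O$ commutes with $O(n)$, hence preserves each fiberwise $\sigma$-isotypic subspace), and the map $s \mapsto i_{\sigma}(s)$ is fiberwise evaluation, which intertwines the $G$-action on $\Gamma(M,\mathcal{E}^{\sigma})$ with the induced $G$-action on $\Gamma(F_O,E)^{\sigma}\subset \Gamma(F_O,E)$. Consequently $i_{\sigma}$ sends the $\rho$-isotypic part of its source to the $\rho$-isotypic part of its target, that is, onto $\Gamma(F_O,E)^{\sigma,\rho}$. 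Symmetrically, $j_{\rho}$ is $O(n)$-equivariant because $\mathcal{T}^{\rho}$ is an $O(n)$-equivariant bundle over $F_O/G$ and $j_{\rho}$ is fiberwise evaluation, so $j_{\rho}$ sends $\Gamma(F_O/G,\mathcal{T}^{\rho})^{\sigma}$ isomorphically onto the same double isotypic space $\Gamma(F_O,E)^{\sigma,\rho}$.

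With these two equivariance statements in hand, the composition
\begin{equation*}
j_{\rho}^{-1}\circ i_{\sigma} \colon \Gamma(M,\mathcal{E}^{\sigma})^{\rho} \xrightarrow{\ i_{\sigma}\ } \Gamma(F_O,E)^{\sigma,\rho} \xrightarrow{\ j_{\rho}^{-1}\ } \Gamma(F_O/G,\mathcal{T}^{\rho})^{\sigma}
\end{equation*}
is a bijection with inverse $i_{\sigma}^{-1}\circ j_{\rho}$. The $L^{2}$-isometry property follows because both $i_{\sigma}$ and $j_{\rho}$ have already been shown to extend to $L^{2}$-isometries on the full section spaces (using that $p\colon F_O\to M$ and $\pi\colon F_O\to F_O/G$ are Riemannian submersions and the fiber metrics are used to define the inner products on $\mathcal{E}^{\sigma}$ and $\mathcal{T}^{\rho}$), and isotypic subspaces are closed; restricting an isometry to a closed subspace still gives an isometry.

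The main obstacle is a bookkeeping one: verifying that the fiberwise isotypic decomposition of $\Gamma(p^{-1}(x),E)$ under $O(n)$ is genuinely preserved by $G$, and dually that the fiberwise decomposition of $\Gamma(\pi^{-1}(z),E)$ under $G$ is preserved by $O(n)$. This reduces to the fact that the two actions on $F_O$ commute (as noted in the discussion preceding this theorem) and that the equivariant bundle structure on $E\to F_O$ respects this commutation, so the two isotypic projections commute with each other as bounded operators on $L^{2}(F_O,E)$. Once this commutation of projections is in place, Schur-type arguments identify the images unambiguously with $\Gamma(F_O,E)^{\sigma,\rho}$ and the theorem follows.
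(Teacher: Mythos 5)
Your proof is correct, and it supplies exactly the argument the paper leaves implicit: the paper states the theorem immediately after establishing that $i_{\sigma}$ and $j_{\rho}$ are $L^{2}$-isometries, without writing out the verification. The essential points you identify --- that the commuting $G\times O(n)$-action makes the double isotypic subspace $\Gamma(F_O,E)^{\sigma,\rho}$ well-defined, that $i_{\sigma}$ is $G$-equivariant and $j_{\rho}$ is $O(n)$-equivariant (both being fiberwise evaluation maps compatible with the respective group actions on the bundles), so that both restrict to bijections onto $\Gamma(F_O,E)^{\sigma,\rho}$, and that restricting an $L^{2}$-isometry to a closed isotypic subspace preserves the isometry --- constitute the intended proof. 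This matches the paper's setup and is complete.
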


\begin{exercise}
Prove that if $M$ is a Riemannian manifold, then the orthonormal frame
bundle of $M$ has trivial tangent bundle.
\end{exercise}

\begin{exercise}
Suppose that a compact Lie group acts smoothly on a Riemannian manifold.
Prove that there exists a metric on the manifold such that the Lie group
acts isometrically.
\end{exercise}

\begin{exercise}
\label{Z2actiononTorusExercise}Let $\mathbb{Z}_{2}$ act on $T^{2}=$ $\mathbb{%
R}^{2}\diagup \mathbb{Z}^{2}$ with an action generated by $\left( x,y\right)
=\left( -x,y\right) $ for $x,y\in \mathbb{R}\diagup \mathbb{Z}$.
\end{exercise}

\begin{itemize}
\item Find the quotient space $T^{2}\diagup \mathbb{Z}_{2}$.

\item Find all the irreducible representations of $\mathbb{Z}_{2}$. (Hint:
they are all homomorphisms $\rho :\mathbb{Z}_{2}\rightarrow U\left( 1\right) 
$.)

\item Find the orthonormal frame bundle $F_{O}$, and determine the induced
action of $\mathbb{Z}_{2}$ on $F_{O}$.

\item Find the quotient space $F_{O}\diagup \mathbb{Z}_{2}$, and determine
the induced action of $O\left( 2\right) $ on this manifold.
\end{itemize}

\begin{exercise}
\label{SO3Exercise}Suppose that $M=S^{2}$ is the unit sphere in $\mathbb{R}%
^{3}$. Let $S^{1}$ act on $S^{2}$ by rotations around the $x_{3}$-axis.

\begin{itemize}
\item Show that the oriented orthonormal frame bundle $F_{SO}$ can be
identified with $SO\left( 3\right) $, which in turn can be identified with $%
\mathbb{R}P^{3}$.

\item Show that the lifted $S^{1}$ action on $F_{SO}$ can be realized by the
orbits of a left-invariant vector field on $SO\left( 3\right) $.

\item Find the quotient $F_{SO}\diagup S^{1}$.
\end{itemize}
\end{exercise}

\begin{exercise}
\label{harmonicExercise}Suppose that a compact, connected Lie group acts by
isometries on a Riemannian manifold. Show that all harmonic forms are
invariant under pullbacks by the group action.
\end{exercise}

\subsection{Dirac-type operators on the frame bundle\label{DiracFrameBundle}}

Let $E\rightarrow F_{O}$ be a Hermitian vector bundle of $\mathbb{C}\mathrm{l%
}\left( N\mathcal{F}\right) $ modules that is equivariant with respect to
the $G\times O\left( n\right) $ action. With notation as in previous
sections, we have the transversal Dirac operator $A_{N\mathcal{F}}$ defined
by the composition 
\begin{equation*}
\Gamma \left( F_{O},E\right) \overset{\nabla }{\rightarrow }\Gamma \left(
F_{O},T^{\ast }F_{O}\otimes E\right) \overset{\mathrm{proj}}{\rightarrow }%
\Gamma \left( F_{O},N^{\ast }\mathcal{F}\otimes E\right) \overset{c}{%
\rightarrow }\Gamma \left( F_{O},E\right) .
\end{equation*}%
As explained previously, the operator 
\begin{equation*}
D_{N\mathcal{F}}=A_{N\mathcal{F}}-\frac{1}{2}c\left( H\right)
\end{equation*}%
is a essentially self-adjoint $G\times O\left( n\right) $-equivariant
operator, where $H$ is the mean curvature vector field of the $G$-orbits in $%
F_{O}$.

From $D_{N\mathcal{F}}$ we now construct equivariant differential operators
on $M$ and $F_{O}\diagup G$, as follows. We define the operators%
\begin{equation*}
D_{M}^{\sigma }:=i_{\sigma }^{-1}\circ D_{N\mathcal{F}}\circ i_{\sigma
}:\Gamma \left( M,\mathcal{E}^{\sigma }\right) \rightarrow \Gamma \left( M,%
\mathcal{E}^{\sigma }\right) ,
\end{equation*}%
and%
\begin{equation*}
D_{F_{O}\diagup G}^{\rho }:=j_{\rho }^{-1}\circ D_{N\mathcal{F}}\circ
j_{\rho }:\Gamma \left( F_{O}\diagup G,\mathcal{T}^{\rho }\right)
\rightarrow \Gamma \left( F_{O}\diagup G,\mathcal{T}^{\rho }\right) .
\end{equation*}%
For an irreducible representation $\alpha :G\rightarrow U\left( V_{\alpha
}\right) $, let 
\begin{equation*}
\left( D_{M}^{\sigma }\right) ^{\alpha }:\Gamma \left( M,\mathcal{E}^{\sigma
}\right) ^{\alpha }\rightarrow \Gamma \left( M,\mathcal{E}^{\sigma }\right)
^{\alpha }
\end{equation*}%
be the restriction of $D_{M}^{\sigma }$ to sections of $G$-representation
type $\left[ \alpha \right] $. Similarly, for an irreducible representation $%
\beta :G\rightarrow U\left( W_{\beta }\right) $, let 
\begin{equation*}
\left( D_{F_{O}\diagup G}^{\rho }\right) ^{\beta }:\Gamma \left(
F_{O}\diagup G,\mathcal{T}^{\rho }\right) ^{\beta }\rightarrow \Gamma \left(
F_{O}\diagup G,\mathcal{T}^{\rho }\right) ^{\beta }
\end{equation*}%
be the restriction of $D_{F_{O}\diagup G}^{\rho }$ to sections of $O\left(
n\right) $-representation type $\left[ \beta \right] $. The proposition
below follows from Theorem \ref{IsomorphismsOfSectionsTheorem}.

\begin{proposition}
\label{SpectraSame}The operator $D_{M}^{\sigma }$ is transversally elliptic
and $G$-equivariant, and $D_{F_{O}\diagup G}^{\rho }$ is elliptic and $%
O\left( n\right) $-equivariant, and the closures of these operators are
self-adjoint. The operators $\left( D_{M}^{\sigma }\right) ^{\rho }$ and $%
\left( D_{F_{O}\diagup G}^{\rho }\right) ^{\sigma }$ have identical discrete
spectrum, and the corresponding eigenspaces are conjugate via Hilbert space
isomorphisms.
\end{proposition}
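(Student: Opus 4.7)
The plan is to derive everything from the fact that $D_{N\mathcal{F}}$ is essentially self-adjoint and $G\times O(n)$-equivariant on $\Gamma(F_O,E)$, and to transport its properties through the two $L^2$-isometries $i_\sigma$ and $j_\rho$ introduced above. By definition $D_M^\sigma=i_\sigma^{-1}\circ D_{N\mathcal{F}}\circ i_\sigma$ and $D_{F_O/G}^\rho=j_\rho^{-1}\circ D_{N\mathcal{F}}\circ j_\rho$ on the $\sigma$- and $\rho$-isotypic subspaces respectively; since $D_{N\mathcal{F}}$ commutes with both the $G$- and $O(n)$-actions, it preserves these subspaces so the compositions make sense, and conjugation by a unitary isomorphism preserves essential self-adjointness, giving the self-adjointness claim. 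Equivariance is immediate: $i_\sigma$ intertwines the residual $G$-action on $\Gamma(M,\mathcal{E}^\sigma)$ with the $G$-action on $\Gamma(F_O,E)^\sigma$, so $D_M^\sigma$ is $G$-equivariant, and symmetrically $D_{F_O/G}^\rho$ is $O(n)$-equivariant.

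Next I would address the ellipticity claims via a symbol calculation using the Riemannian submersions $p:F_O\to M$ and $\pi:F_O\to F_O/G$. By construction $N\mathcal{F}$ is the normal bundle to the $G$-orbits in $F_O$, and the horizontal lift of a covector on $M$ that is nonzero on the $G$-orbit directions downstairs lies precisely in $N^*\mathcal{F}$ (modulo the vertical $O(n)$-fiber directions, which do not appear in the symbol of $D_{N\mathcal{F}}$). Since the principal symbol of $D_{N\mathcal{F}}$ at a nonzero covector in $N^*\mathcal{F}$ is Clifford multiplication, which is invertible, this proves transversal ellipticity of $D_M^\sigma$ with respect to the $G$-orbits on $M$. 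The critical observation for the other operator is that on the quotient $F_O/G$ there are no orbit directions left, so $N\mathcal{F}$ pushes down isomorphically onto the entire tangent bundle of $F_O/G$; the identical symbol computation then yields full ellipticity of $D_{F_O/G}^\rho$ on the closed manifold $F_O/G$, which in turn gives the discrete spectrum with finite-dimensional smooth eigenspaces.

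Finally, I would invoke Theorem \ref{IsomorphismsOfSectionsTheorem}: the map $j_\rho^{-1}\circ i_\sigma:\Gamma(M,\mathcal{E}^\sigma)^\rho\to\Gamma(F_O/G,\mathcal{T}^\rho)^\sigma$ is an $L^2$-isometry, and by construction both of its factors intertwine the respective operator with $D_{N\mathcal{F}}$ on $\Gamma(F_O,E)^{\rho\otimes\sigma}$. Consequently $j_\rho^{-1}\circ i_\sigma$ conjugates $(D_M^\sigma)^\rho$ into $(D_{F_O/G}^\rho)^\sigma$, so their spectra coincide and the eigenspaces are unitarily identified. Since $D_{F_O/G}^\rho$ is elliptic on a closed manifold the spectrum is discrete, and eigenvalue multiplicities match under the isometry.

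The main obstacle I anticipate is the geometric verification in the middle paragraph, namely that the horizontal lifting from $T^*M$ into $N^*\mathcal{F}\subset T^*F_O$ under the Riemannian submersion $p$ is genuinely compatible with the decomposition $T^*F_O=N^*\mathcal{F}\oplus T^*\mathcal{F}$ used to define the transversal symbol, and dually that the push-down from $N^*\mathcal{F}$ to $T^*(F_O/G)$ under $\pi$ is an isomorphism; once this is pinned down the ellipticity statements are automatic, and the remaining assertions reduce to formal bookkeeping with the unitary identifications $i_\sigma$ and $j_\rho$.
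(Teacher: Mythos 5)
Your proposal is correct and follows exactly the route the paper intends: the proposition is stated there with only the remark that it ``follows from Theorem \ref{IsomorphismsOfSectionsTheorem},'' and you have filled in precisely the details that remark leaves implicit --- transport of essential self-adjointness and equivariance through the unitaries $i_\sigma$ and $j_\rho$, ellipticity of $D_{F_O\diagup G}^\rho$ (hence discreteness of the spectrum) from the identification of $N\mathcal{F}$ with the horizontal lift of $T(F_O\diagup G)$, and the final conjugation of $(D_M^\sigma)^\rho$ into $(D_{F_O\diagup G}^\rho)^\sigma$ by $j_\rho^{-1}\circ i_\sigma$. One small imprecision to fix in the write-up: transversal ellipticity of $D_M^\sigma$ concerns covectors in $T_G^*M$, i.e.\ covectors that \emph{annihilate} the $G$-orbit directions (not ones that are ``nonzero on'' them), and these lift horizontally into $N^*\mathcal{F}$; also, the $O(n)$-fiber directions do lie in $N\mathcal{F}$ and hence do appear in the symbol of $D_{N\mathcal{F}}$ --- the point is rather that on the $\sigma$-isotypic subspace those derivatives are bounded and contribute only lower-order terms to $D_M^\sigma$, so they drop out of its principal symbol. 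With those two clarifications your argument is the intended one.
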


Thus, questions about the transversally elliptic operator $D_{M}^{\sigma }$
can be reduced to questions about the elliptic operators $D_{F_{O}\diagup
G}^{\rho }$ for each irreducible $\rho :G\rightarrow U\left( V_{\rho
}\right) $.

In particular, we are interested in the equivariant index, which we will
explain in detail the next section. In the following theorem, $\mathrm{ind}%
^{G}\left( \cdot \right) $ denotes the virtual representation-valued index
as explained in \cite{A} and in Section \ref{equivIndexSection}; the result
is a formal difference of finite-dimensional representations if the input is
a symbol of an elliptic operator.

\begin{theorem}
Suppose that $F_{O}$ is $G$-transversally spin$^{c}$. Then for every
transversally elliptic symbol class $\left[ u\right] \in K_{cpt,G}\left(
T_{G}^{\ast }M\right) $, there exists an operator of type $D_{M}^{\mathbf{1}%
} $ such that $\mathrm{ind}^{G}\left( u\right) =\mathrm{ind}^{G}\left(
D_{M}^{\mathbf{1}}\right) $.\label{indexClassGivenByTransvDiracThm}
\end{theorem}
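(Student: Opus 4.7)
The plan is to combine the equivariant Thom isomorphism on $F_O$ with the identifications of Theorem~\ref{IsomorphismsOfSectionsTheorem} applied to the trivial $O(n)$-representation $\sigma = \mathbf{1}$. The underlying principle is that sections of $\mathcal{E}^{\mathbf{1}}$ over $M$ correspond to $O(n)$-invariant sections of $E$ over $F_O$, so operators of type $D_M^{\mathbf{1}}$ are precisely the descents of the transversal Dirac operators $D_{N\mathcal{F}}$ built in Section~\ref{DiracFrameBundle}. It therefore suffices to realize $[u]$ as the symbol class of such a Dirac operator on $F_O$, twisted by a suitable equivariant auxiliary bundle.

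First, I would lift the class $[u]\in K_{cpt,G}(T_G^*M)$ to $F_O$ via $p^*$. Because $p\colon F_O\to M$ is a Riemannian submersion with compact $O(n)$ fibers, pullback identifies $K_{cpt,G}(T_G^*M)$ with the $O(n)$-trivial isotypic component of $K_{cpt,G\times O(n)}(T_G^*F_O)$. The hypothesis that $F_O$ is $G$-transversally spin$^c$ then supplies a $G\times O(n)$-equivariant transverse spinor bundle $S\to F_O$ for $N\mathcal{F}$, and the equivariant transverse Thom isomorphism
\[
K_{G\times O(n)}(F_O/G)\;\xrightarrow{\;\cong\;}\;K_{cpt,G\times O(n)}(T_G^*F_O),\qquad [W]\;\longmapsto\;[\sigma(D_{N\mathcal{F}}^{S\otimes W})],
\]
exhibits every class, in particular $p^*[u]$, as the principal symbol of a transversal Dirac operator twisted by an equivariant bundle $W\to F_O$.

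With such a $W$ in hand, set $E=S\otimes W$ and form $D_{N\mathcal{F}}$ as in Section~\ref{DiracFrameBundle}. The bundle $E$ is $G\times O(n)$-equivariant, so the constructions of Section~\ref{equivariantStructureSection} apply and produce, via the $L^2$-isometry $i_{\mathbf{1}}$, an operator $D_M^{\mathbf{1}}$ on $\mathcal{E}^{\mathbf{1}}\to M$ whose symbol corresponds to $\sigma(D_{N\mathcal{F}})$ on the $\mathbf{1}$-isotypic part for $O(n)$. Since $p^*[u]$ lives entirely in that isotypic component by construction, Proposition~\ref{SpectraSame} gives the equality of $G$-equivariant indices $\mathrm{ind}^G(u)=\mathrm{ind}^G(D_M^{\mathbf{1}})$.

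The main technical obstacle is verifying the equivariant transverse Thom isomorphism step in full generality, and ensuring that the twisting bundle $W$ can be chosen $G\times O(n)$-equivariant so that restriction to the $\mathbf{1}$-isotypic component for $O(n)$ loses no information. This uses that the $G$- and $O(n)$-actions on $F_O$ commute (so their representation theories decouple) together with Lemma~\ref{LiftingLemmaForGroupActions}, which provides a regular $G$-action compatible with the free $O(n)$-action on the frame bundle; after these checks, Atiyah's equivariant Thom construction~\cite{A} applies to the transversally elliptic setting at hand and closes the argument.
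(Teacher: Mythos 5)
Your high-level strategy is the natural one and is almost certainly what the hypothesis on $F_O$ is for: lift $[u]$ to the frame bundle, exploit the $G$-transversal spin$^c$ structure to produce a Thom class, realize the lifted symbol (up to equivariant homotopy) as $\sigma\bigl(D_{N\mathcal{F}}^{S\otimes W}\bigr)$ for a suitable auxiliary bundle $W$, and then descend via the $O(n)$-invariant sections. However, two steps are imprecise to the point of being gaps.

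First, the K-theory identification you state is not correct as written. Since $O(n)$ acts freely on $F_O$, there is no "isotypic decomposition" of $K_{cpt,G\times O(n)}\bigl(T_G^*F_O\bigr)$ by $O(n)$-types; equivariant K-theory for a free compact group action simply descends to ordinary K-theory of the quotient. Moreover, $T_G^*M$ and $T_G^*F_O/O(n)$ are genuinely different spaces: the conormal fibers to $G$-orbits in $F_O$ have constant dimension (since the lifted $G$-action is regular, by Lemma~\ref{LiftingLemmaForGroupActions}), whereas the fibers of $T_G^*M$ jump in dimension across strata. Atiyah's free-action theorem, applied to the $O(n)$-principal bundle $p\colon F_O\to M$ with the commuting $G$-action, gives an isomorphism $K_{cpt,G\times O(n)}\bigl(T_{G\times O(n)}^*F_O\bigr)\cong K_{cpt,G}\bigl(T_G^*M\bigr)$, involving $T_{G\times O(n)}^*F_O$ and not $T_G^*F_O$. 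Your Thom isomorphism, on the other hand, lands in $K_{cpt,G\times O(n)}\bigl(T_G^*F_O\bigr)$, since the transverse spin$^c$ structure is for $N\mathcal{F}$, the conormal to $G$-orbits. You therefore need to supply, and you do not, the comparison between these two different compactly supported K-groups on $F_O$ — equivalently, the fact that the relevant class in $K_{cpt,G\times O(n)}\bigl(T_{G\times O(n)}^*F_O\bigr)$ lies in the image (at least after applying $\mathrm{ind}^{G\times O(n)}$) of the restriction from $K_{cpt,G\times O(n)}\bigl(T_G^*F_O\bigr)$.

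Second, the concluding appeal to Proposition~\ref{SpectraSame} does not do what you want. That proposition compares the spectra of the two descents $(D_M^\sigma)^\rho$ and $(D_{F_O\diagup G}^\rho)^\sigma$ of a fixed $D_{N\mathcal{F}}$; it says nothing about comparing $\mathrm{ind}^G(D_M^{\mathbf{1}})$ to $\mathrm{ind}^G(u)$ for an arbitrary input symbol $u$ on $M$. The missing step is the index-level identity
$\mathrm{ind}^G(u)=\bigl(\mathrm{ind}^{G\times O(n)}(p^*u)\bigr)\big|_{O(n)\text{-invariants}}$
for the $O(n)$-principal bundle $p$, which is Atiyah's multiplicativity/induction argument from~\cite{A} and must be invoked explicitly; the identification $\Gamma\bigl(M,\mathcal{E}^{\mathbf{1}}\bigr)\cong\Gamma\bigl(F_O,E\bigr)^{O(n)}$ via $i_{\mathbf{1}}$ then only closes the gap on the Dirac side, giving $\mathrm{ind}^G\bigl(D_M^{\mathbf{1}}\bigr)=\bigl(\mathrm{ind}^{G\times O(n)}\bigl(D_{N\mathcal{F}}^{S\otimes W}\bigr)\bigr)\big|_{O(n)\text{-inv}}$. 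Without these two ingredients made explicit, the argument does not close.
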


\begin{exercise}
(continuation of Exercise \ref{Z2actiononTorusExercise})

\begin{itemize}
\item Determine a Dirac operator on the trivial $\mathbb{C}^{2}$ bundle over
the three-dimensional $F_{O}$. (Hint: use the Dirac operator from $\mathbb{R}%
^{3}$.)

\item Find the induced operator $D_{T^{2}}^{\mathbf{1}}$ on $T^{2}$, where $%
\mathbf{1}$ denotes the trivial representation $\mathbf{1}:O\left( 2\right)
\rightarrow \left\{ \mathbf{1}\right\} \in U\left( 1\right) $. This means
the restriction of the Dirac operator of $F_{O}$ to sections that are
invariant under the $O\left( 2\right) $ action.

\item Identify all irreducible unitary representations of $O\left( 2\right) $%
. (Hint: they are all homomorphisms $\sigma :O\left( 2\right) \rightarrow
U\left( 1\right) $.)

\item Find $\ker D_{T^{2}}^{\mathbf{1}}$ and $\ker D_{T^{2}}^{\mathbf{1\ast }%
}$, and decompose these vector spaces as direct sums of irreducible unitary
representations of $O\left( 2\right) $.

\item For each irreducible unitary representation $\rho :\mathbb{Z}%
_{2}\rightarrow U\left( 1\right) $ of $\mathbb{Z}_{2}$, determine the
induced operator $D_{F_{O}\diagup \mathbb{Z}_{2}}^{\rho }$.
\end{itemize}
\end{exercise}

\begin{exercise}
(continuation of Exercise \ref{SO3Exercise})

\begin{itemize}
\item Starting with a transversal Dirac operator on the trivial $\mathbb{C}%
^{2}$ bundle over $SO\left( 3\right) $, find the induced operator $%
D_{S^{2}}^{\mathbf{1}}$ on $S^{2}$.

\item Identify all irreducible unitary representations of $S^{1}$.

\item Find $\ker D_{S^{2}}^{\mathbf{1}}$ and $\ker D_{S^{2}}^{\mathbf{1\ast }%
}$, and decompose these vector spaces as direct sums of irreducible unitary
representations of $S^{1}$.
\end{itemize}
\end{exercise}

\begin{exercise}
(continuation of Exercise \ref{harmonicExercise}) Suppose that a compact,
connected Lie group $G$ acts on a Riemannian manifold. Show that the $\ker
\left( d+\delta \right) $ is the same as $G$-invariant part $\ker \left(
d+\delta \right) ^{\mathbf{1}}$ of $\ker \left( d+\delta \right) $, and $%
\mathrm{\ker }\left( d+\delta \right) ^{\rho }=0$ for all other irreducible
representations $\rho :G\rightarrow U\left( V_{\rho }\right) $.
\end{exercise}

\begin{exercise}
Suppose that $G=M$ acts freely on itself. Construct a transversal Dirac
operator acting on a trivial spinor bundle on the orthonormal frame bundle.
Determine the operator $D_{G}^{\mathbf{1}}$ for $1$ being the trivial
representation of $O\left( n\right) $, and find $\ker D_{G}^{\mathbf{1}}$
and $\ker $ $D_{G}^{\mathbf{1}\ast }$.
\end{exercise}

\section{Transverse index theory for $G$-manifolds and Riemannian foliations}

The research content and some of the expository content in this section are
joint work with J. Br\"{u}ning and F. W. Kamber, from \cite{BrKRi1} and \cite%
{BKRi2}.

\subsection{Introduction: the equivariant index\label{equivIndexSection}}

Suppose that a compact Lie group $G$ acts by isometries on a compact,
connected Riemannian manifold $M$, and let $E=E^{+}\oplus E^{-}$ be a
graded, $G$-equivariant Hermitian vector bundle over $M$. We consider a
first order $G$-equivariant differential operator $D=D^{+}:$ $\Gamma \left(
M,E^{+}\right) \rightarrow \Gamma \left( M,E^{-}\right) $ which is
transversally elliptic (as explained at the beginning of Section \ref%
{Z2actiononTorusExercise}). Let $D^{-}$ be the formal adjoint of $D^{+}$.

The group $G$ acts on $\Gamma \left( M,E^{\pm }\right) $ by $\left(
gs\right) \left( x\right) =g\cdot s\left( g^{-1}x\right) $, and the
(possibly infinite-dimensional) subspaces $\ker \left( D\right) $ and $\ker
\left( D^{\ast }\right) $ are $G$-invariant subspaces. Let $\rho
:G\rightarrow U\left( V_{\rho }\right) $ be an irreducible unitary
representation of $G$, and let $\chi _{\rho }=\mathrm{tr}\left( \rho \right) 
$ denote its character. Let $\Gamma \left( M,E^{\pm }\right) ^{\rho }$ be
the subspace of sections that is the direct sum of the irreducible $G$%
-representation subspaces of $\Gamma \left( M,E^{\pm }\right) $ that are
unitarily equivalent to the $\rho $ representation. The operators 
\begin{equation*}
D^{\rho }:\Gamma \left( M,E^{+}\right) ^{\rho }\rightarrow \Gamma \left(
M,E^{-}\right) ^{\rho }
\end{equation*}%
can be extended to be Fredholm operators, so that each irreducible
representation of $G$ appears with finite multiplicity in $\ker D^{\pm }$.
Let $a_{\rho }^{\pm }\in \mathbb{Z}^{+}$ be the multiplicity of $\rho $ in $%
\ker \left( D^{\pm }\right) $.

The virtual representation-valued index of $D$ (see \cite{A}) is 
\begin{equation*}
\mathrm{ind}^{G}\left( D\right) :=\sum_{\rho }\left( a_{\rho }^{+}-a_{\rho
}^{-}\right) \left[ \rho \right] ,
\end{equation*}%
where $\left[ \rho \right] $ denotes the equivalence class of the
irreducible representation $\rho $. The index multiplicity is 
\begin{equation*}
\mathrm{ind}^{\rho }\left( D\right) :=a_{\rho }^{+}-a_{\rho }^{-}=\frac{1}{%
\dim V_{\rho }}\mathrm{ind}\left( \left. D\right\vert _{\Gamma \left(
M,E^{+}\right) ^{\rho }\rightarrow \Gamma \left( M,E^{-}\right) ^{\rho
}}\right) .
\end{equation*}%
In particular, if $\mathbf{1}$ is the trivial representation of $G$, then 
\begin{equation*}
\mathrm{ind}^{\mathbf{1}}\left( D\right) =\mathrm{ind}\left( \left.
D\right\vert _{\Gamma \left( M,E^{+}\right) ^{G}\rightarrow \Gamma \left(
M,E^{-}\right) ^{G}}\right) ,
\end{equation*}%
where the superscript $G$ implies restriction to $G$-invariant sections.

There is a relationship between the index multiplicities and Atiyah's
equivariant distribution-valued index $\mathrm{ind}_{g}\left( D\right) $
(see \cite{A}); the multiplicities determine the distributional index, and
vice versa. Because the operator $\left. D\right\vert _{\Gamma \left(
M,E^{+}\right) ^{\rho }\rightarrow \Gamma \left( M,E^{-}\right) ^{\rho }}$
is Fredholm, all of the indices $\mathrm{ind}^{G}\left( D\right) $ , $%
\mathrm{ind}_{g}\left( D\right) $, and $\mathrm{ind}^{\rho }\left( D\right) $
depend only on the homotopy class of the principal transverse symbol of $D$.

The new equivariant index result (in \cite{BrKRi1})is stated in Theorem \ref%
{MainTheorem}. A\ large body of work over the last twenty years has yielded
theorems that express $\mathrm{ind}_{g}\left( D\right) $ in terms of
topological and geometric quantities (as in the Atiyah-Segal index theorem
for elliptic operators or the Berline-Vergne Theorem for transversally
elliptic operators --- see \cite{ASe},\cite{Be-V1},\cite{Be-V2}). However,
until now there has been very little known about the problem of expressing $%
\mathrm{ind}^{\rho }\left( D\right) $ in terms of topological or geometric
quantities which are determined at the different strata 
\begin{equation*}
M\left( \left[ H\right] \right) :=\bigcup\limits_{G_{x}\in \left[ H\right] }x
\end{equation*}%
of the $G$-manifold $M$. The special case when all of the isotropy groups
are finite was solved by M. Atiyah in \cite{A}, and this result was utilized
by T. Kawasaki to prove the Orbifold Index Theorem (see \cite{Kawas2}). Our
analysis is new in that the equivariant heat kernel related to the index is
integrated first over the group and second over the manifold, and thus the
invariants in our index theorem (Theorem \ref{MainTheorem}) are very
different from those seen in other equivariant index formulas. The explicit
nature of the formula is demonstrated in Theorem \ref%
{EulerCharacteristicTheorem}, a special case where the equivariant Euler
characteristic is computed in terms of invariants of the $G$-manifold strata.

One of the primary motivations for obtaining an explicit formula for $%
\mathrm{ind}^{\rho }\left( D\right) $ was to use it to produce a basic index
theorem for Riemannian foliations, thereby solving a problem that has been
open since the 1980s (it is mentioned, for example, in \cite{EK}). In fact,
the basic index theorem (Theorem \ref{BasicIndexTheorem}) is a consequence
of the equivariant index theorem. We note that a recent paper of Gorokhovsky
and Lott addresses this transverse index question on Riemannian foliations.
Using a different technique, they are able to prove a formula for the basic
index of a basic Dirac operator that is distinct from our formula, in the
case where all the infinitessimal holonomy groups of the foliation are
connected tori and if Molino's commuting sheaf is abelian and has trivial
holonomy (see \cite{GLott}). Our result requires at most mild topological
assumptions on the transverse structure of the strata of the Riemannian
foliation and has a similar form to the formula above for $\mathrm{ind}^{%
\mathbf{1}}\left( D\right) $. In particular, the analogue for the
Gauss-Bonnet Theorem for Riemannian foliations (Theorem \ref%
{BasicGaussBonnet}) is a corollary and requires no assumptions on the
structure of the Riemannian foliation.

There are several new techniques used in the proof of the equivariant index
theorem that have not been explored previously, and we will briefly describe
them in upcoming sections. First, the proof requires a modification of the
equivariant structure. In Section \ref{stratification}, we describe the
known structure of $G$-manifolds. In Section \ref{BlowupDoubleMainSection},
we describe a process of blowing up, cutting, and reassembling the $G$%
-manifold into what is called the \emph{desingularization}. The result is a $%
G$-manifold that has less intricate structure and for which the analysis is
more simple. We note that our desingularization process and the equivariant
index theorem were stated and announced in \cite{RiOber} and \cite{RiKor};
recently Albin and Melrose have taken it a step further in tracking the
effects of the desingularization on equivariant cohomology and equivariant
K-theory (\cite{AlMel}).

Another crucial step in the proof of the equivariant index theorem is the
decomposition of equivariant vector bundles over $G$-manifolds with one
orbit type. We construct a subbundle of an equivariant bundle over a $G$%
-invariant part of a stratum that is the minimal $G$-bundle decomposition
that consists of direct sums of isotypical components of the bundle. We call
this decomposition the \emph{fine decomposition} and define it in Section %
\ref{FineDecompSection}. More detailed accounts of this method are in \cite%
{BrKRi1}, \cite{KRi}.

\begin{exercise}
\vspace{1pt}\label{zeroMapExercise}Let $Z:C^{\infty }\left( S^{1},\mathbb{C}%
\right) \rightarrow \left\{ 0\right\} $ denote the zero operator on
complex-valued functions on the circle $S^{1}$. If we consider $Z$ to be an $%
S^{1}$-equivariant operator on the circle, find $\mathrm{ind}^{\rho }\left(
Z\right) $ for every irreducible representation $\rho :S^{1}\rightarrow
U\left( 1\right) $. (Important: the target bundle is the zero vector
bundle). We are assuming that $S^{1}$ acts by rotations.
\end{exercise}

\begin{exercise}
In Exercise \ref{zeroMapExercise}, instead calculate each $\mathrm{ind}%
^{\rho }\left( Z\right) $, if $Z:C^{\infty }\left( S^{1},\mathbb{C}\right)
\rightarrow C^{\infty }\left( S^{1},\mathbb{C}\right) $ is multiplication by
zero.
\end{exercise}

\begin{exercise}
Let $D=i\frac{d}{d\theta }:C^{\infty }\left( S^{1},\mathbb{C}\right)
\rightarrow C^{\infty }\left( S^{1},\mathbb{C}\right) $ be an operator on
complex-valued functions on the circle $S^{1}=\left\{ e^{i\theta }:\theta
\in \mathbb{R}\diagup 2\pi \mathbb{Z}\right\} $. Consider $D$ to be a $%
\mathbb{Z}_{2}$-equivariant operator, where the action is generated by $%
\theta \mapsto \theta +\pi $. Find $\mathrm{ind}^{\rho }\left( Z\right) $
for every irreducible representation $\rho :\mathbb{Z}_{2}\rightarrow
U\left( 1\right) $.
\end{exercise}

\begin{exercise}
Let $D=i\frac{d}{d\theta }:C^{\infty }\left( S^{1},\mathbb{C}\right)
\rightarrow C^{\infty }\left( S^{1},\mathbb{C}\right) $ be an operator on
complex-valued functions on the circle $S^{1}=\left\{ e^{i\theta }:\theta
\in \mathbb{R}\diagup 2\pi \mathbb{Z}\right\} $. Consider the $\mathbb{Z}%
_{2} $ action generated by $\theta \mapsto -\theta $. Show that $D$ is not $%
\mathbb{Z}_{2}$-equivariant.
\end{exercise}

\begin{exercise}
Let $\mathbb{Z}_{2}$ act on $T^{2}=$ $\mathbb{R}^{2}\diagup \mathbb{Z}^{2}$
with an action generated by $\left( x,y\right) =\left( -x,y\right) $ for $%
x,y\in \mathbb{R}\diagup \mathbb{Z}$. Calculate $\mathrm{ind}^{\rho }\left(
D\right) $ for every irreducible representation $\rho :\mathbb{Z}%
_{2}\rightarrow U\left( 1\right) $, where $D$ is the standard Dirac operator
on the trivial $\mathbb{C}^{2}$ bundle.
\end{exercise}

\subsection{Stratifications of $G$-manifolds\label{stratification}}

In the following, we will describe some standard results from the theory of
Lie group actions (see \cite{Bre}, \cite{Kaw}). As above, $G$ is a compact
Lie group acting on a smooth, connected, closed manifold $M$. We assume that
the action is effective, meaning that no $g\in G$ fixes all of $M$.
(Otherwise, replace $G$ with $G\diagup \left\{ g\in G:gx=x\text{ for all }%
x\in M\right\} $.) Choose a Riemannian metric for which $G$ acts by
isometries.

Given such an action and $x\in M$, the \textbf{isotropy or stabilizer
subgroup} $G_{x}<G$ is defined to be $\left\{ g\in G:gx=x\right\} $. The 
\textbf{orbit} $\mathcal{O}_{x}$ of a point $x$ is defined to be $\left\{
gx:g\in G\right\} $. Since $G_{xg}=gG_{x}g^{-1}$, the conjugacy class of the
isotropy subgroup of a point is fixed along an orbit.

On any such $G$-manifold, the conjugacy class of the isotropy subgroups
along an orbit is called the \textbf{orbit type}. On any such $G$-manifold,
there are a finite number of orbit types, and there is a partial order on
the set of orbit types. Given subgroups $H$ and $K$ of $G$, we say that $%
\left[ H\right] \leq $ $\left[ K\right] $ if $H$ is conjugate to a subgroup
of $K$, and we say $\left[ H\right] <$ $\left[ K\right] $ if $\left[ H\right]
\leq $ $\left[ K\right] $ and $\left[ H\right] \neq $ $\left[ K\right] $. We
may enumerate the conjugacy classes of isotropy subgroups as $\left[ G_{0}%
\right] ,...,\left[ G_{r}\right] $ such that $\left[ G_{i}\right] \leq \left[
G_{j}\right] $ implies that $i\leq j$. It is well-known that the union of
the principal orbits (those with type $\left[ G_{0}\right] $) form an open
dense subset $M_{0}$ of the manifold $M$, and the other orbits are called 
\textbf{singular}. As a consequence, every isotropy subgroup $H$ satisfies $%
\left[ G_{0}\right] \leq \left[ H\right] $. Let $M_{j}$ denote the set of
points of $M$ of orbit type $\left[ G_{j}\right] $ for each $j$; the set $%
M_{j}$ is called the \textbf{stratum} corresponding to $\left[ G_{j}\right] $%
. If $\left[ G_{j}\right] \leq \left[ G_{k}\right] $, it follows that the
closure of $M_{j}$ contains the closure of $M_{k}$. A stratum $M_{j}$ is
called a \textbf{minimal stratum} if there does not exist a stratum $M_{k}$
such that $\left[ G_{j}\right] <\left[ G_{k}\right] $ (equivalently, such
that $\overline{M_{k}}\subsetneq \overline{M_{j}}$). It is known that each
stratum is a $G$-invariant submanifold of $M$, and in fact a minimal stratum
is a closed (but not necessarily connected) submanifold. Also, for each $j$,
the submanifold $M_{\geq j}:=\bigcup\limits_{\left[ G_{k}\right] \geq \left[
G_{j}\right] }M_{k}$ is a closed, $G$-invariant submanifold.

Now, given a proper, $G$-invariant submanifold $S$ of $M$ and $\varepsilon
>0 $, let $T_{\varepsilon }(S)$ denote the union of the images of the
exponential map at $s$ for $s\in S$ restricted to the open ball of radius $%
\varepsilon $ in the normal bundle at $S$. It follows that $T_{\varepsilon
}(S)$ is also $G$ -invariant. If $M_{j}$ is a stratum and $\varepsilon $ is
sufficiently small, then all orbits in $T_{\varepsilon }\left( M_{j}\right)
\setminus M_{j}$ are of type $\left[ G_{k}\right] $, where $\left[ G_{k}%
\right] <\left[ G_{j}\right] $. This implies that if $j<k$, $\overline{M_{j}}%
\cap \overline{M_{k}}\neq \varnothing $, and $M_{k}\subsetneq \overline{M_{j}%
}$, then $\overline{M_{j}}$ and $\overline{M_{k}}$ intersect at right
angles, and their intersection consists of more singular strata (with
isotropy groups containing conjugates of both $G_{k}$ and $G_{j}$).

Fix $\varepsilon >0$. We now decompose $M$ as a disjoint union of sets $%
M_{0}^{\varepsilon },\dots ,M_{r}^{\varepsilon }$. If there is only one
isotropy type on $M$, then $r=0$, and we let $M_{0}^{\varepsilon }=\Sigma
_{0}^{\varepsilon }=M_{0}=M$. Otherwise, for $j=r,r-1,...,0$, let $%
\varepsilon _{j}=2^{j}\varepsilon $, and let%
\begin{equation*}
{\ }\Sigma _{j}^{\varepsilon }=M_{j}\setminus \overline{\bigcup_{k>j}M_{k}^{%
\varepsilon }};~~M_{j}^{\varepsilon }=T_{\varepsilon _{j}}\left(
M_{j}\right) \setminus \overline{\bigcup_{k>j}M_{k}^{\varepsilon }}.
\end{equation*}%
Thus, 
\begin{equation*}
{\ }T_{\varepsilon }\left( \Sigma _{j}^{\varepsilon }\right) \subset
M_{j}^{\varepsilon },~\Sigma _{j}^{\varepsilon }\subset M_{j}.
\end{equation*}

The following facts about this decomposition are contained in \cite[pp.~203ff%
]{Kaw}:

\begin{lemma}
\label{decomposition} For sufficiently small $\varepsilon >0$, we have, for
every $i\in \{0,\ldots ,r\}$:

\begin{enumerate}
\item $\displaystyle M=\coprod_{i=0}^{r}M_{i}^{\varepsilon }$ (disjoint
union).

\item $M_{i}^{\varepsilon }$ is a union of $G$-orbits; $\Sigma
_{i}^{\varepsilon }$ is a union of $G$-orbits.

\item The manifold $M_{i}^{\varepsilon }$ is diffeomorphic to the interior
of a compact $G$ -manifold with corners; the orbit space $M_{i}^{\varepsilon
}\diagup G$ is a smooth manifold that is isometric to the interior of a
triangulable, compact manifold with corners. The same is true for each $%
\Sigma _{i}^{\varepsilon }$.

\item If $\left[ G_{j}\right] $ is the isotropy type of an orbit in $%
M_{i}^{\varepsilon }$, then $j\leq i$ and $\left[ G_{j}\right] \leq \left[
G_{i}\right] $.

\item The distance between the submanifold $M_{j}$ and $M_{i}^{\varepsilon }$
for $j>i$ is at least $\varepsilon $.
\end{enumerate}
\end{lemma}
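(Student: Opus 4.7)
The strategy is reverse induction on the stratum index, working downward from $i = r$ (most singular) to $i = 0$ (principal). The recursive definition is tailored for this: $M_r^{\varepsilon}$ is just $T_{\varepsilon_r}(M_r)$, and thereafter $M_j^{\varepsilon}$ is built by subtracting the already-defined higher-index sets from the tube $T_{\varepsilon_j}(M_j)$. I will use three inputs from the preceding discussion: every $M_j$ is a $G$-invariant submanifold, the slice theorem governs the local structure near any stratum (so that for $\varepsilon$ small enough $T_\varepsilon(M_j) \setminus M_j$ contains only orbits whose isotropy type is strictly smaller than $[G_j]$), and distinct strata meet at right angles along their common closure.

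Properties (1), (2), (4), and (5) are mostly formal. For (1), disjointness is automatic: if $i > j$, then $M_i^{\varepsilon} \subset \bigcup_{k > j} M_k^{\varepsilon}$, which is subtracted when forming $M_j^{\varepsilon}$. For covering, given $x \in M$ lying in a stratum $M_\ell$, I take $i$ to be the largest index with $x \in T_{\varepsilon_i}(M_i)$; by maximality $x \notin \overline{\bigcup_{k > i} M_k^{\varepsilon}}$, so $x \in M_i^{\varepsilon}$. For (2), since $G$ acts by isometries each $T_{\varepsilon_j}(M_j)$ is $G$-invariant, and the recursion preserves $G$-invariance. Property (4) is immediate from the slice-theorem input: if $y \in M_i^{\varepsilon} \subset T_{\varepsilon_i}(M_i)$ has isotropy $[G_j]$, then either $y \in M_i$ and $j = i$, or $[G_j] < [G_i]$ and therefore $j < i$ by the order-preserving enumeration. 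For (5), if $y \in M_i^{\varepsilon}$ and $x \in M_j$ with $j > i$, then $T_{\varepsilon_j}(M_j) \subset M_j^{\varepsilon} \cup \overline{\bigcup_{k > j} M_k^{\varepsilon}} \subset \overline{\bigcup_{k > i} M_k^{\varepsilon}}$, and $y$ lies outside this last closure by the very definition of $M_i^{\varepsilon}$; consequently $d(x, y) \geq d(y, M_j) \geq \varepsilon_j = 2^j \varepsilon \geq 2\varepsilon > \varepsilon$.

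The substantial step is (3). The plan is to invoke the $G$-equivariant tubular neighborhood theorem to model $T_{\varepsilon_j}(M_j)$ as the total space of an open disk bundle (of radius $\varepsilon_j$) associated to the normal bundle of $M_j$, with smooth $G$-invariant boundary given by the corresponding sphere bundle. The set subtracted to form $M_j^{\varepsilon}$ is a finite union of analogous closed tubes $\overline{T_{\varepsilon_k}(M_k)}$ for $k > j$ (further trimmed by still higher indices), which enter $T_{\varepsilon_j}(M_j)$ along smooth hypersurfaces cut out by those sphere bundles. By the right-angle meeting of strata, the various boundary pieces are mutually transverse inside $T_{\varepsilon_j}(M_j)$, so $M_j^{\varepsilon}$ is the interior of a compact $G$-manifold whose corners arise from intersections of finitely many such smooth hypersurfaces. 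The $G$-equivariance of every ingredient passes the manifold-with-corners structure down to $M_j^{\varepsilon}/G$, with charts supplied pointwise by the slice theorem; triangulability of the compact quotient with corners then follows from standard semianalytic triangulation theorems. The corresponding claim for $\Sigma_j^{\varepsilon}$ is the restriction of the same picture to the central fiber $M_j$ itself.

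The hard part will be making the transversality at multiple corners genuinely quantitative: at a point near $\overline{M_i} \cap \overline{M_j} \cap \overline{M_k}$ one has several boundary hypersurfaces meeting at once, and one must choose $\varepsilon$ small enough that these meet in general position rather than in some more singular configuration. The geometric scaling $\varepsilon_j = 2^j\varepsilon$ is not a convenience but a necessity here: the rapid doubling forces the boundary of $T_{\varepsilon_k}(M_k)$ to be well separated from that of $T_{\varepsilon_j}(M_j)$ for $k \neq j$ by an amount comparable to $\varepsilon_k - \varepsilon_j$, preventing pathological tangencies. The remaining verification is a local computation in $G$-equivariant normal coordinates at each point of the closed singular locus, and it proceeds as in \cite{Kaw}.
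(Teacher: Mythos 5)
The paper itself gives no proof of this lemma: immediately before the statement it says ``The following facts about this decomposition are contained in \cite[pp.~203ff]{Kaw},'' so the expected argument is precisely Kawakubo's, and your job here is really to reconstruct it. Your overall plan --- downward recursion on the stratum index, equivariant tubular neighborhood theorem, slice theorem for the orbit-type control, right-angle intersection of stratum closures for the corner transversality --- is the right one, and you correctly identify (3) as the place where the work lives.

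There is a genuine gap in your argument for (1), and it is worth seeing exactly where. You take $i$ to be the largest index with $x\in T_{\varepsilon_i}(M_i)$ and then assert that maximality gives $x\notin\overline{\bigcup_{k>i}M_k^{\varepsilon}}$. But maximality only gives $x\notin T_{\varepsilon_k}(M_k)$ for $k>i$, i.e.\ $x\notin M_k^{\varepsilon}$; it does not exclude $x$ from the \emph{closure} $\overline{M_k^{\varepsilon}}$. A point sitting exactly on the boundary sphere bundle $\{\,y:d(y,M_k)=\varepsilon_k\,\}$ lies in $\overline{M_k^{\varepsilon}}\setminus T_{\varepsilon_k}(M_k)$, and for such $x$ your argument breaks. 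In fact, taken at face value the decomposition $M=\coprod M_i^{\varepsilon}$ into pieces each of which (by (3)) is an \emph{open} manifold would make a connected $M$ a finite disjoint union of nonempty open sets, which is impossible; the resolution in Kawakubo is that the recursive construction actually produces a decomposition into manifolds with corners glued along their boundary faces, and the ``disjoint union'' should be read accordingly (equivalently, closed tubes rather than open ones are subtracted at each stage). Your proof inherits this imprecision verbatim, so before it can be called complete you need to fix the conventions for the tubes, and then the covering and disjointness become a straightforward accounting of which closed/half-open pieces carry which boundary faces. Once that is done your argument for (5) needs the same touch-up: $y\notin T_{\varepsilon_j}(M_j)$ gives $d(y,M_j)\ge\varepsilon_j$ only after one knows the tube captures the full metric $\varepsilon_j$-neighborhood of $M_j$, which is a statement about the injectivity radius of the normal exponential along the (non-closed) submanifold $M_j$, controlled again by taking $\varepsilon$ small.

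Two smaller remarks. First, your heuristic for the dyadic scaling $\varepsilon_j=2^j\varepsilon$ --- that it separates the boundary hypersurfaces --- is close but not quite the point; the role of the rapidly growing radii is to guarantee a nesting: near a point in $\overline{M_j}\cap\overline{M_k}$ with $[G_j]<[G_k]$, the larger tube $T_{\varepsilon_k}(M_k)$ engulfs the part of $T_{\varepsilon_j}(M_j)$ lying close to $M_k$, so that after the subtraction only the transverse corner configuration remains. Second, in (3) you should make explicit that the $O(2)$-slice theorem is what turns the local model of $M_i^{\varepsilon}$ into a \emph{product} of a slice disk with an orbit, which is what lets the corner structure descend to the quotient $M_i^{\varepsilon}/G$; without that the passage from $G$-manifold with corners to compact manifold with corners in the orbit space is not automatic.
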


\begin{exercise}
Suppose $G$ and $M$ are as above. Show that if $\gamma $ is a geodesic that
is perpendicular at $x\in M$ to the orbit $\mathcal{O}_{x}$ through $x$,
then $\gamma $ is perpendicular to every orbit that intersects it.
\end{exercise}

\begin{exercise}
With $G$ and $M$ as above, suppose that $\gamma $ is a geodesic that is
orthogonal to a particular singular stratum $\Sigma $. Prove that each
element $g\in G$ maps $\gamma $ to another geodesic with the same property.
\end{exercise}

\begin{exercise}
Prove that if $S$ is any set of isometries of a Riemannian manifold $M$,
then the fixed point set $M^{S}:=\left\{ x\in M:gx=x\text{ for every }g\in
S\right\} $ is a totally geodesic submanifold of $M$.
\end{exercise}

\begin{exercise}
Prove that if $\Sigma $ is a stratum of the action of $G$ on $M$
corresponding to isotropy type $\left[ H\right] $, then the fixed point set $%
\Sigma ^{H}$ is a principal $N\left( H\right) \diagup H$ bundle over $%
G\diagdown \Sigma $, where $N\left( H\right) $ is the normalizer of the
subgroup $H$.
\end{exercise}

\begin{exercise}
\label{Z2xZ2actionOnS2exercise}Let $\mathbb{Z}_{2}\times \mathbb{Z}_{2}$ act
on $S^{2}\subset \mathbb{R}^{3}$ via $\left( x,y,z\right) \mapsto \left(
-x,y,z\right) $ and $\left( x,y,z\right) \mapsto \left( x,-y,z\right) $.
Determine the strata of this action and the isotropy types.
\end{exercise}

\begin{exercise}
\label{O2actionOnS2exercise}Let $O\left( 2\right) $ act on $S^{2}\subset 
\mathbb{R}^{3}$ by rotations that fix the $z$-axis. Determine the strata of
this action and the isotropy types.
\end{exercise}

\begin{exercise}
\label{S1actionOnKleinExercise}Let $M$ be the rectangle $\left[ 0,1\right]
\times \left[ -1,1\right] $ along with identifications $\left( s,1\right)
\sim \left( s,-1\right) $ for $0\leq s\leq 1$, $\left( 0,x\right) \sim
\left( 0,x+\frac{1}{2}\right) $ for $0\leq x\leq \frac{1}{2}$, and $\left(
1,x\right) \sim \left( 1,x+\frac{1}{2}\right) $ for $0\leq x\leq \frac{1}{2}$%
.

\begin{itemize}
\item Show that $M$ is a smooth Riemannian manifold when endowed with the
standard flat metric.

\item Find the topological type of the surface $M$.

\item Suppose that $S^{1}=\mathbb{R}\diagup 2\mathbb{Z}$ acts on $M$ via $%
\left( s,x\right) \mapsto \left( s,x+t\right) $, with $x,t\in \mathbb{R}%
\diagup 2\mathbb{Z}$ . Find the strata and the isotropy types of this action.
\end{itemize}
\end{exercise}

\subsection{Equivariant desingularization\label{BlowupDoubleMainSection}}

\vspace{1pt}Assume that $G$ is a compact Lie group that acts on a Riemannian
manifold $M$ by isometries. We construct a new $G$-manifold $N$ that has a
single stratum (of type $\left[ G_{0}\right] $) and that is a branched cover
of $M$, branched over the singular strata. A distinguished fundamental
domain of $M_{0}$ in $N$ is called the \textbf{desingularization} of $M$ and
is denoted $\widetilde{M}$. We also refer to \cite{AlMel} for their recent
related explanation of this desingularization (which they call \emph{%
resolution}).

A sequence of modifications is used to construct $N$ and $\widetilde{M}%
\subset N$. Let $M_{j}$ be a minimal stratum. Let $T_{\varepsilon }\left(
M_{j}\right) $ denote a tubular neighborhood of radius $\varepsilon $ around 
$M_{j}$, with $\varepsilon $ chosen sufficiently small so that all orbits in 
$T_{\varepsilon }\left( M_{j}\right) \setminus M_{j}$ are of type $\left[
G_{k}\right] $, where $\left[ G_{k}\right] <\left[ G_{j}\right] $. Let 
\begin{equation*}
N^{1}=\left( M\setminus T_{\varepsilon }\left( M_{j}\right) \right) \cup
_{\partial T_{\varepsilon }\left( M_{j}\right) }\left( M\setminus
T_{\varepsilon }\left( M_{j}\right) \right)
\end{equation*}%
be the manifold constructed by gluing two copies of $\left( M\setminus
T_{\varepsilon }\left( M_{j}\right) \right) $ smoothly along the boundary
(the codimension one case should be treated in a slightly different way; see 
\cite{BrKRi1} for details). Since the $T_{\varepsilon }\left( M_{j}\right) $
is saturated (a union of $G$-orbits), the $G$-action lifts to $N^{1}$. Note
that the strata of the $G$-action on $N^{1}$ correspond to strata in $%
M\setminus T_{\varepsilon }\left( M_{j}\right) $. If $M_{k}\cap \left(
M\setminus T_{\varepsilon }\left( M_{j}\right) \right) $ is nontrivial, then
the stratum corresponding to isotropy type $\left[ G_{k}\right] $ on $N^{1}$
is 
\begin{equation*}
N_{k}^{1}=\left( M_{k}\cap \left( M\setminus T_{\varepsilon }\left(
M_{j}\right) \right) \right) \cup _{\left( M_{k}\cap \partial T_{\varepsilon
}\left( M_{j}\right) \right) }\left( M_{k}\cap \left( M\setminus
T_{\varepsilon }\left( M_{j}\right) \right) \right) .
\end{equation*}%
Thus, $N^{1}$ is a $G$-manifold with one fewer stratum than $M$, and $%
M\setminus M_{j}$ is diffeomorphic to one copy of $\left( M\setminus
T_{\varepsilon }\left( M_{j}\right) \right) $, denoted $\widetilde{M}^{1}$
in $N^{1}$. In fact, $N^{1}$ is a branched double cover of $M$, branched
over $M_{j}$. If $N^{1}$ has one orbit type, then we set $N=N^{1}$ and $%
\widetilde{M}=\widetilde{M}^{1}$. If $N^{1}$ has more than one orbit type,
we repeat the process with the $G$-manifold $N^{1}$ to produce a new $G$%
-manifold $N^{2}$ with two fewer orbit types than $M$ and that is a $4$-fold
branched cover of $M$. Again, $\widetilde{M}^{2}$ is a fundamental domain of 
$\widetilde{M}^{1}\setminus \left\{ \text{a minimal stratum}\right\} $,
which is a fundamental domain of $M$ with two strata removed. We continue
until $N=N^{r}$ is a $G$-manifold with all orbits of type $\left[ G_{0}%
\right] $ and is a $2^{r}$-fold branched cover of $M$, branched over $%
M\setminus M_{0}$. We set $\widetilde{M}=\widetilde{M}^{r}$, which is a
fundamental domain of $M_{0}$ in $N$.

Further, one may independently desingularize $M_{\geq j}$, since this
submanifold is itself a closed $G$-manifold. If $M_{\geq j}$ has more than
one connected component, we may desingularize all components simultaneously.
The isotropy type of all points of $\widetilde{M_{\geq j}}$ is $\left[ G_{j}%
\right] $, and $\widetilde{M_{\geq j}}\diagup G$ is a smooth (open) manifold.

\begin{exercise}
Find the desingularization $\widetilde{M_{j}}$ of each stratum $M_{j}$ for
the $G$-manifold in Exercise \ref{Z2xZ2actionOnS2exercise}.
\end{exercise}

\begin{exercise}
Find the desingularization $\widetilde{M_{j}}$ of each stratum $M_{j}$ for
the $G$-manifold in Exercise \ref{O2actionOnS2exercise}.
\end{exercise}

\begin{exercise}
Find the desingularization $\widetilde{M_{j}}$ of each stratum $M_{j}$ for
the $G$-manifold in Exercise \ref{S1actionOnKleinExercise}.
\end{exercise}

\subsection{The fine decomposition of an equivariant bundle\label%
{FineDecompSection}}

\vspace{1pt}Let $X^{H}$ be the fixed point set of $H$ in a $G$-manifold $X$
with one orbit type $\left[ H\right] $. For $\alpha \in \pi _{0}\left(
X^{H}\right) $, let $X_{\alpha }^{H}$ denote the corresponding connected
component of $X^{H}$.

\begin{definition}
\label{componentRelGDefn}We denote $X_{\alpha }=GX_{\alpha }^{H}$, and $%
X_{\alpha }$ is called a \textbf{component of} $X$ \textbf{relative to} $G$.
\end{definition}

\begin{remark}
The space $X_{\alpha }$ is not necessarily connected, but it is the inverse
image of a connected component of $G\diagdown X$ under the projection $%
X\rightarrow G\diagdown X$. Also, note that $X_{\alpha }=X_{\beta }$ if
there exists $n$ in the normalizer $N=N\left( H\right) $ such that $%
nX_{\alpha }^{H}=X_{\beta }^{H}$. If $X$ is a closed manifold, then there
are a finite number of components of $X$ relative to $G$.
\end{remark}

We now introduce a decomposition of a $G$-bundle $E\rightarrow X$. Let $%
E_{\alpha }$ be the restriction $\left. E\right\vert _{X_{\alpha }^{H}}$.
For any irreducible representation $\sigma :H\rightarrow U\left( W_{\sigma
}\right) $, we define for $n\in N$ the representation $\sigma
^{n}:H\rightarrow U\left( W_{\sigma }\right) $ by $\sigma ^{n}\left(
h\right) =\sigma \left( n^{-1}hn\right) $. Let $\widetilde{N}_{\left[ \sigma %
\right] }=\left\{ n\in N:\left[ \sigma ^{n}\right] \text{~is~equivalent~to}~%
\left[ \sigma \right] ~\right\} $ . If the isotypical component $E_{\alpha
}^{\left[ \sigma \right] }$ is nontrivial, then it is invariant under the
subgroup $\widetilde{N}_{\alpha ,\left[ \sigma \right] }\subseteq \widetilde{%
N}_{\left[ \sigma \right] }$ that leaves in addition the connected component 
$X_{\alpha }^{H}$ invariant; again, this subgroup has finite index in $N$.
The isotypical components transform under $n\in N$ as%
\begin{equation*}
n:E_{\alpha }^{\left[ \sigma \right] }\overset{\cong }{\longrightarrow }E_{%
\overline{n}\left( \alpha \right) }^{\left[ \sigma ^{n}\right] }~,
\end{equation*}%
where $\overline{n}$ denotes the residue class class of $n\in N$ in $%
N\diagup \widetilde{N}_{\alpha ,\left[ \sigma \right] }~$. Then a
decomposition of $E$ is obtained by `inducing up' the isotypical components $%
E_{\alpha }^{\left[ \sigma \right] }$ from $\widetilde{N}_{\alpha ,\left[
\sigma \right] }$ to $N$. That is, 
\begin{equation*}
E_{\alpha ,\left[ \sigma \right] }^{N}=N\times _{\widetilde{N}_{\alpha ,%
\left[ \sigma \right] }}E_{\alpha }^{\left[ \sigma \right] }
\end{equation*}%
is a bundle containing $\left. E_{\alpha }^{\left[ \sigma \right]
}\right\vert _{X_{\alpha }^{H}}$ . This is an $N$-bundle over $NX_{\alpha
}^{H}\subseteq X^{H}$, and a similar bundle may be formed over each distinct 
$NX_{\beta }^{H}$, with $\beta \in \pi _{0}\left( X^{H}\right) $. Further,
observe that since each bundle $E_{\alpha ,\left[ \sigma \right] }^{N}$ is
an $N$-bundle over $NX_{\alpha }^{H}$, it defines a unique $G$ bundle $%
E_{\alpha ,\left[ \sigma \right] }^{G}$ (see Exercise \ref%
{NbundleDeterminesGbundleExercise}).

\begin{definition}
\label{fineComponentDefinition}The $G$-bundle $E_{\alpha ,\left[ \sigma %
\right] }^{G}$ over the submanifold $X_{\alpha }$ is called a \textbf{fine
component} or the \textbf{fine component of }$E\rightarrow X$ \textbf{%
associated to }$\left( \alpha ,\left[ \sigma \right] \right) $.
\end{definition}

If $G\diagdown X$ is not connected, one must construct the fine components
separately over each $X_{\alpha }$. If $E$ has finite rank, then $E$ may be
decomposed as a direct sum of distinct fine components over each $X_{\alpha
} $. In any case, $E_{\alpha ,\left[ \sigma \right] }^{N}$ is a finite
direct sum of isotypical components over each $X_{\alpha }^{H}$.

\begin{definition}
\label{FineDecompositionDefinition}The direct sum decomposition of $\left.
E\right\vert _{X_{\alpha }}$ into subbundles $E^{b}$ that are fine
components $E_{\alpha ,\left[ \sigma \right] }^{G}$ for some $\left[ \sigma %
\right] $, written 
\begin{equation*}
\left. E\right\vert _{X_{\alpha }}=\bigoplus\limits_{b}E^{b}~,
\end{equation*}%
is called the \textbf{refined isotypical decomposition} (or \textbf{fine
decomposition}) of $\left. E\right\vert _{X_{\alpha }}$.
\end{definition}

We comment that if $\left[ \sigma ,W_{\sigma }\right] $ is an irreducible $H$%
-representation present in $E_{x}$ with $x\in X_{\alpha }^{H}$, then $E_{x}^{%
\left[ \sigma \right] }$ is a subspace of a distinct $E_{x}^{b}$ for some $b$%
. The subspace $E_{x}^{b}$ also contains $E_{x}^{\left[ \sigma ^{n}\right] }$
for every $n$ such that $nX_{\alpha }^{H}=X_{\alpha }^{H}$~.

\begin{remark}
\label{constantMultiplicityRemark}Observe that by construction, for $x\in
X_{\alpha }^{H}$ the multiplicity and dimension of each $\left[ \sigma %
\right] $ present in a specific $E_{x}^{b}$ is independent of $\left[ \sigma %
\right] $. Thus, $E_{x}^{\left[ \sigma ^{n}\right] }$ and $E_{x}^{\left[
\sigma \right] }$ have the same multiplicity and dimension if $nX_{\alpha
}^{H}=X_{\alpha }^{H}$~.
\end{remark}

\begin{remark}
The advantage of this decomposition over the isotypical decomposition is
that each $E^{b}$ is a $G$-bundle defined over all of $X_{\alpha }$, and the
isotypical decomposition may only be defined over $X_{\alpha }^{H}$.
\end{remark}

\begin{definition}
\label{adaptedDefn}Now, let $E$ be a $G$-equivariant vector bundle over $X$,
and let $E^{b}~$be a fine component as in Definition \ref%
{fineComponentDefinition} corresponding to a specific component $X_{\alpha
}=GX_{\alpha }^{H}$ of $X$ relative to $G$. Suppose that another $G$-bundle $%
W$ over $X_{\alpha }$ has finite rank and has the property that the
equivalence classes of $G_{y}$-representations present in $E_{y}^{b},y\in
X_{\alpha }$ exactly coincide with the equivalence classes of $G_{y}$%
-representations present in $W_{y}$, and that $W$ has a single component in
the fine decomposition. Then we say that $W$ is \textbf{adapted} to $E^{b}$.
\end{definition}

\begin{lemma}
\label{AdaptedToAnyBundleLemma}In the definition above, if another $G$%
-bundle $W$ over $X_{\alpha }$ has finite rank and has the property that the
equivalence classes of $G_{y}$-representations present in $E_{y}^{b},y\in
X_{\alpha }$ exactly coincide with the equivalence classes of $G_{y}$%
-representations present in $W_{y}$, then it follows that $W$ has a single
component in the fine decomposition and hence is adapted to $E^{b}$. Thus,
the last phrase in the corresponding sentence in the above definition is
superfluous.
\end{lemma}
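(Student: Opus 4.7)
The plan is to show that the hypothesis on representation types forces $W$ to consist of a single $N(H)$-orbit of isotypical components over $X_\alpha^H$, which is precisely the condition that $W$ has one fine component.

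First I would reduce the hypothesis to the fiber over a point $y\in X_\alpha^H$. Since $W$ and $E^b$ are both $G$-equivariant, the $G_y$-representation structure on each fiber is constant along connected components of the fixed-point set $X_\alpha^H$ and is transported by the $G$-action to fibers over $X_\alpha = GX_\alpha^H$. Thus the pointwise coincidence of representation types over all of $X_\alpha$ is equivalent to the statement that, for each connected component $X_{\alpha_i}^H$ of $X^H$ inside $X_\alpha$ and any $y\in X_{\alpha_i}^H$ (where $G_y = H$), the set of irreducible $H$-representation classes appearing in $W_y$ equals the set appearing in $E_y^b$.

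Next I would unpack the characterization of fine components implicit in Definition \ref{FineDecompositionDefinition}. Over the fixed set $X_\alpha^H$, any $G$-equivariant bundle splits as a direct sum of isotypical subbundles indexed by irreducible $H$-representation classes. Let $N_\alpha\subset N=N(H)$ denote the subgroup stabilizing the component $X_\alpha^H$. Then $N_\alpha$ acts on the set of representation classes by $[\sigma]\mapsto[\sigma^n]$, mapping $E^{[\sigma]}$ to $E^{[\sigma^n]}$, and each fine component over $X_\alpha$ corresponds to restricting to a single $N_\alpha$-orbit of such classes (and then inducing up from $\widetilde{N}_{\alpha,[\sigma]}$ to $N$ and thence to $G$). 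In particular, by definition of $E^b$, the set $\mathcal{O}$ of representation classes present in $E_y^b$ is exactly one $N_\alpha$-orbit.

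Now I would decompose $W|_{X_\alpha}=\bigoplus_c W^c$ into its fine components according to Definition \ref{FineDecompositionDefinition}; each $W^c$ contributes a distinct $N_\alpha$-orbit $\mathcal{O}_c$ of representation classes at $y$, and the disjoint union $\bigsqcup_c \mathcal{O}_c$ equals the full set of classes present in $W_y$. By the reduction in the first paragraph, this full set equals $\mathcal{O}$, which is a single $N_\alpha$-orbit. Therefore only one $\mathcal{O}_c$ can be nonempty, so the decomposition has exactly one summand, and $W$ itself is adapted to $E^b$ in the sense of Definition \ref{adaptedDefn}.

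The main obstacle is bookkeeping: one must verify that the orbit structure of $H$-representation classes under $N_\alpha$ (or equivalently, of pairs (connected component of $X^H$ in $X_\alpha$, class $[\sigma]$) under the full $N$-action) depends only on the pair $(H, X_\alpha)$ and not on the particular bundle. Once this intrinsic nature of the orbit decomposition is noted, the argument reduces to the tautology that a set equal to one orbit has exactly one orbit.
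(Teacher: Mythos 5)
Your proof is correct; the paper itself leaves this lemma as an exercise, so there is no official argument to compare against, but yours is the natural one. The crucial observation --- that the $H$-representation types in the fiber of a fine component over any $y\in X_\alpha^H$ form exactly one orbit under the subgroup $N_\alpha = \{n\in N(H): nX_\alpha^H = X_\alpha^H\}$, and that the fine decomposition of any finite-rank $G$-bundle over $X_\alpha$ is indexed by the distinct $N_\alpha$-orbits of types present --- does all the work, so that a set equal to one orbit admits only one summand. One small point of precision: your opening sentence refers to ``a single $N(H)$-orbit,'' but as your own subsequent paragraphs make clear, the relevant group is $N_\alpha$ rather than the full normalizer; since these may differ, it is worth stating the correct group from the start.
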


\begin{exercise}
\label{NbundleDeterminesGbundleExercise}Suppose that $X$ is a $G$-manifold, $%
H$ is an isotropy subgroup, and $E^{\prime }\rightarrow X^{H}$ is an $%
N\left( H\right) $-bundle over the fixed point set $X^{H}$. Prove that $%
E^{\prime }$ uniquely determines a $G$-bundle $E$ over $X$ such that $\left.
E\right\vert _{X^{H}}=E^{\prime }$.
\end{exercise}

\begin{exercise}
Prove Lemma \ref{AdaptedToAnyBundleLemma}.
\end{exercise}

\subsection{Canonical isotropy $G$-bundles}

In what follows, we show that there are naturally defined finite-dimensional
vector bundles that are adapted to any fine components. Once and for all, we
enumerate the irreducible representations $\left\{ \left[ \rho _{j},V_{\rho
_{j}}\right] \right\} _{j=1,2,...}$ of $G$. Let $\left[ \sigma ,W_{\sigma }%
\right] $ be any irreducible $H$-representation. Let $G\times _{H}W_{\sigma
} $ be the corresponding homogeneous vector bundle over the homogeneous
space $G\diagup H$. Then the $L^{2}$-sections of this vector bundle
decompose into irreducible $G$-representations. In particular, let $\left[
\rho _{j_{0}},V_{\rho _{j_{0}}}\right] $ be the equivalence class of
irreducible representations that is present in $L^{2}\left( G\diagup
H,G\times _{H}W_{\sigma }\right) $ and that has the lowest index $j_{0}$.
Then Frobenius reciprocity implies%
\begin{equation*}
0\neq \mathrm{Hom}_{G}\left( V_{\rho _{j_{0}}},L^{2}\left( G\diagup
H,G\times _{H}W_{\sigma }\right) \right) \cong \mathrm{Hom}_{H}\left( V_{%
\mathrm{\mathrm{Res}}\left( \rho _{j_{0}}\right) },W_{\sigma }\right) ,
\end{equation*}%
so that the restriction of $\rho _{j_{0}}$ to $H$ contains the $H$%
-representation $\left[ \sigma \right] $. Now, for a component $X_{\alpha
}^{H}$ of $X^{H}$, with $X_{\alpha }=GX_{\alpha }^{H}$ its component in $X$
relative to $G$, the trivial bundle%
\begin{equation*}
X_{\alpha }\times V_{\rho _{j_{0}}}
\end{equation*}%
is a $G$-bundle (with diagonal action) that contains a nontrivial fine
component $W_{\alpha ,\left[ \sigma \right] }$ containing $X_{\alpha
}^{H}\times \left( V_{\rho _{j_{0}}}\right) ^{\left[ \sigma \right] }$.

\begin{definition}
\label{canonicalIsotropyBundleDefinition}We call $W_{\alpha ,\left[ \sigma %
\right] }\rightarrow X_{\alpha }$ the \textbf{canonical isotropy }$G$\textbf{%
-bundle associated to }$\left( \alpha ,\left[ \sigma \right] \right) \in \pi
_{0}\left( X^{H}\right) \times \widehat{H}$. Observe that $W_{\alpha ,\left[
\sigma \right] }$ depends only on the enumeration of irreducible
representations of $G$, the irreducible $H$-representation $\left[ \sigma %
\right] $ and the component $X_{\alpha }^{H}$. We also denote the following
positive integers associated to $W_{\alpha ,\left[ \sigma \right] }$:

\begin{itemize}
\item $m_{\alpha ,\left[ \sigma \right] }=\dim \mathrm{Hom}_{H}\left(
W_{\sigma },W_{\alpha ,\left[ \sigma \right] ,x}\right) =\dim \mathrm{Hom}%
_{H}\left( W_{\sigma },V_{\rho _{j_{0}}}\right) $ (the \textbf{associated
multiplicity}), independent of the choice of $\left[ \sigma ,W_{\sigma }%
\right] $ present in $W_{\alpha ,\left[ \sigma \right] ,x}$ , $x\in
X_{\alpha }^{H}$ (see Remark \ref{constantMultiplicityRemark}).

\item $d_{\alpha ,\left[ \sigma \right] }=\dim W_{\sigma }$(the \textbf{%
associated representation dimension}), independent of the choice of $\left[
\sigma ,W_{\sigma }\right] $ present in $W_{\alpha ,\left[ \sigma \right]
,x} $ , $x\in X_{\alpha }^{H}$.

\item $n_{\alpha ,\left[ \sigma \right] }=\frac{\mathrm{rank}\left(
W_{\alpha ,\left[ \sigma \right] }\right) }{m_{\alpha ,\left[ \sigma \right]
}d_{\alpha ,\left[ \sigma \right] }}$ (the \textbf{inequivalence number}),
the number of inequivalent representations present in $W_{\alpha ,\left[
\sigma \right] ,x}$ , $x\in X_{\alpha }^{H}$.
\end{itemize}
\end{definition}

\begin{remark}
Observe that $W_{\alpha ,\left[ \sigma \right] }=W_{\alpha ^{\prime },\left[
\sigma ^{\prime }\right] }$ if $\left[ \sigma ^{\prime }\right] =\left[
\sigma ^{n}\right] $ for some $n\in N$ such that $nX_{\alpha }^{H}=X_{\alpha
^{\prime }}^{H}~$.
\end{remark}

\begin{lemma}
\label{canIsotropyGbundleAdaptedExists}Given any $G$-bundle $E\rightarrow X$
and any fine component $E^{b}$ of $E$ over some $X_{\alpha }=GX_{\alpha
}^{H} $, there exists a canonical isotropy $G$-bundle $W_{\alpha ,\left[
\sigma \right] }$ adapted to $E^{b}\rightarrow X_{\alpha }$.
\end{lemma}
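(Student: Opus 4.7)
The plan is to read an irreducible $H$-representation off of the given fine component $E^b$ and feed it into the canonical isotropy bundle construction of Definition~\ref{canonicalIsotropyBundleDefinition}, then check that the output has the required isotype content.

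First I fix a basepoint $x\in X_\alpha^H$. Because $E^b$ is a nonzero fine component over $X_\alpha$, its fiber $E^b_x$ contains at least one isotypical summand $E_x^{[\sigma]}$ for some irreducible $H$-representation $[\sigma,W_\sigma]$. Applying Definition~\ref{canonicalIsotropyBundleDefinition} to the pair $(\alpha,[\sigma])$ produces the candidate bundle $W_{\alpha,[\sigma]}\to X_\alpha$: one locates the lowest-index irreducible $G$-representation $[\rho_{j_0},V_{\rho_{j_0}}]$ whose restriction to $H$ contains $[\sigma]$ (guaranteed by Frobenius reciprocity applied to the homogeneous bundle $G\times_H W_\sigma$), and defines $W_{\alpha,[\sigma]}$ as the unique fine component of the trivial $G$-bundle $X_\alpha\times V_{\rho_{j_0}}$ whose fiber at $x$ contains $(V_{\rho_{j_0}})^{[\sigma]}$.

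The core step is to verify that $E^b_x$ and $W_{\alpha,[\sigma],x}$ contain exactly the same set of equivalence classes of irreducible $H$-representations. Unwinding the fine decomposition as the ``inducing up'' $F\mapsto N\times_{\widetilde{N}_{\alpha,[\sigma]}} F_\alpha^{[\sigma]}$ from Definition~\ref{FineDecompositionDefinition}, one sees that the $H$-isotypes appearing in any single fine component at $x\in X_\alpha^H$ that contains $[\sigma]$ are precisely the orbit of $[\sigma]$ under the action $n\cdot[\sigma]=[\sigma^n]$ of the subgroup $N_\alpha\subseteq N(H)$ stabilizing the component $X_\alpha^H$. Applied once to $E|_{X_\alpha}$ (producing the isotypes in $E^b_x$) and once to the trivial bundle $X_\alpha\times V_{\rho_{j_0}}$ (producing those in $W_{\alpha,[\sigma],x}$), this yields the \emph{same} orbit, since both fine components are chosen to contain $[\sigma]$.

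Having matched the $H$-isotypes at every point of $X_\alpha^H$, I use $G$-equivariance to transport the coincidence to all of $X_\alpha$: any $y\in X_\alpha$ has the form $y=gx'$ with $x'\in X_\alpha^H$, and the action of $g$ intertwines the $H$-isotype decomposition at $x'$ with the $G_y$-isotype decomposition at $y$. Lemma~\ref{AdaptedToAnyBundleLemma} then yields immediately that $W_{\alpha,[\sigma]}$ is adapted to $E^b$ in the sense of Definition~\ref{adaptedDefn}. The only genuine obstacle is the orbit-matching step: one must push the induction construction through Definition~\ref{FineDecompositionDefinition} carefully enough to confirm that the $H$-isotype content of a fine component at $x$ really is a single $N_\alpha$-orbit of $[\sigma]$, with no extra classes appearing.
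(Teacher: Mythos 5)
Your proof is correct and follows the intended line of reasoning: pick an $H$-isotype $[\sigma]$ present in $E^b_x$, form $W_{\alpha,[\sigma]}$ as in Definition~\ref{canonicalIsotropyBundleDefinition}, and observe that both fine components have $H$-isotype content exactly the $N_\alpha$-orbit of $[\sigma]$, so Lemma~\ref{AdaptedToAnyBundleLemma} finishes the job. The ``genuine obstacle'' you flag at the end is not really a gap: each isotypical piece $E_\alpha^{[\sigma']}$ is a subbundle of constant rank over the connected set $X_\alpha^H$, and $n\in N_\alpha$ gives a bundle isomorphism $E_\alpha^{[\sigma']}\to E_\alpha^{[(\sigma')^n]}$, so once one isotype in the $N_\alpha$-orbit is nonzero every isotype in the orbit is nonzero with equal multiplicity, and by construction no isotype outside that orbit is swept in by inducing from $\widetilde{N}_{\alpha,[\sigma]}$ up to $N_\alpha$ --- this is exactly the content of Remark~\ref{constantMultiplicityRemark}.
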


\begin{exercise}
Prove Lemma \ref{canIsotropyGbundleAdaptedExists}.
\end{exercise}

\begin{exercise}
Suppose $G$ is a compact, connected Lie group, and $T$ is a maximal torus.
Let $G$ act on left on the homogeneous space $X=G\diagup T$.

\begin{itemize}
\item What is $\left( G\diagup T\right) ^{T}$?

\item Let $\sigma _{a}$ be a fixed irreducible representation of $T$ (on $%
\mathbb{C}$), say $\sigma _{a}\left( t\right) =\exp \left( 2\pi i\left(
a\cdot t\right) \right) $ with $a\in \mathbb{Z}^{m}$, $m=rank\left( T\right) 
$. Let $E=G\times _{\sigma _{a}}\mathbb{C}\rightarrow G\diagup T$ be the
associated line bundle. Is $E$ a canonical isotropy $G$-bundle associated to 
$\left( \cdot ,\left[ \sigma _{a}\right] \right) $?

\item Is it true that every complex $G$-bundle over $G\diagup T$ is a direct
sum of equivariant line bundles?
\end{itemize}
\end{exercise}

\subsection{The equivariant index theorem}

\vspace{1pt}To evaluate $\mathrm{ind}^{\rho }\left( D\right) $, we first
perform the equivariant desingularization as described in Section \ref%
{BlowupDoubleMainSection}, starting with a minimal stratum. In \cite{BrKRi1}%
, we precisely determine the effect of the desingularization on the
operators and bundles, and in turn the supertrace of the equivariant heat
kernel. We obtain the following result. In what follows, if $U$ denotes an
open subset of a stratum of the action of $G$ on $M$, $U^{\prime }$ denotes
the equivariant desingularization of $U$, and $\widetilde{U}$ denotes the
fundamental domain of $U$ inside $U^{\prime }$, as in Section \ref%
{BlowupDoubleMainSection}. We also refer the reader to Definitions \ref%
{componentRelGDefn} and \ref{canonicalIsotropyBundleDefinition}.

\begin{theorem}
(Equivariant Index Theorem, in \cite{BrKRi1}) \label{MainTheorem}Let $M_{0}$
be the principal stratum of the action of a compact Lie group $G$ on the
closed Riemannian $M$, and let $\Sigma _{\alpha _{1}}$,...,$\Sigma _{\alpha
_{r}}$ denote all the components of all singular strata relative to $G$. Let 
$E\rightarrow M$ be a Hermitian vector bundle on which $G$ acts by
isometries. Let $D:\Gamma \left( M,E^{+}\right) \rightarrow \Gamma \left(
M,E^{-}\right) $ be a first order, transversally elliptic, $G$-equivariant
differential operator. We assume that near each $\Sigma _{\alpha _{j}}$, $D$
is $G$-homotopic to the product $D_{N}\ast D^{\alpha _{j}}$, where $D_{N}$
is a $G$-equivariant, first order differential operator on $B_{\varepsilon
}\Sigma $ that is elliptic and has constant coefficients on the fibers and $%
D^{\alpha _{j}}\ $is a global transversally elliptic, $G$-equivariant, first
order operator on the $\Sigma _{\alpha _{j}}$. In polar coordinates 
\begin{equation*}
D_{N}=Z_{j}\left( \nabla _{\partial _{r}}^{E}+\frac{1}{r}D_{j}^{S}\right) ~,
\end{equation*}%
where $r$ is the distance from $\Sigma _{\alpha _{j}}$, where $Z_{j}$ is a
local bundle isometry (dependent on the spherical parameter), the map $%
D_{j}^{S}$ is a family of purely first order operators that differentiates
in directions tangent to the unit normal bundle of $\Sigma _{j}$. Then the
equivariant index $\mathrm{ind}^{\rho }\left( D\right) $ satisfies 
\begin{eqnarray*}
\mathrm{ind}^{\rho }\left( D\right) &=&\int_{G\diagdown \widetilde{M_{0}}%
}A_{0}^{\rho }\left( x\right) ~\widetilde{\left\vert dx\right\vert }%
~+\sum_{j=1}^{r}\beta \left( \Sigma _{\alpha _{j}}\right) ~, \\
\beta \left( \Sigma _{\alpha _{j}}\right) &=&\frac{1}{2\dim V_{\rho }}%
\sum_{b\in B}\frac{1}{n_{b}\mathrm{rank~}W^{b}}\left( -\eta \left(
D_{j}^{S+,b}\right) +h\left( D_{j}^{S+,b}\right) \right) \int_{G\diagdown 
\widetilde{\Sigma _{\alpha _{j}}}}A_{j,b}^{\rho }\left( x\right) ~\widetilde{%
\left\vert dx\right\vert }~,
\end{eqnarray*}%
where

\begin{enumerate}
\item $A_{0}^{\rho }\left( x\right) $ is the Atiyah-Singer integrand, the
local supertrace of the ordinary heat kernel associated to the elliptic
operator induced from $D^{\prime }$ (blown-up and doubled from $D$) on the
quotient $M_{0}^{\prime }\diagup G$, where the bundle $E$ is replaced by the
finite rank bundle $\mathcal{E}_{\rho }$ of sections of type $\rho $ over
the fibers.

\item Similarly, $A_{i,b}^{\rho }$ is the local supertrace of the ordinary
heat kernel associated to the elliptic operator induced from $\left( \mathbf{%
1}\otimes D^{\alpha _{j}}\right) ^{\prime }$ (blown-up and doubled from $%
\mathbf{1}\otimes D^{\alpha _{j}}$, the twist of $D^{\alpha _{j}}$ by the
canonical isotropy bundle $W^{b}\rightarrow \Sigma _{\alpha _{j}}$ ) on the
quotient $\Sigma _{\alpha _{j}}^{\prime }\diagup G$, where the bundle is
replaced by the space of sections of type $\rho $ over each orbit.

\item $\eta \left( D_{j}^{S+,b}\right) $ is the eta invariant of the
operator $D_{j}^{S+}$ induced on any unit normal sphere $S_{x}\Sigma
_{\alpha _{j}}$, restricted to sections of isotropy representation types in $%
W_{x}^{b}$; see \cite{BrKRi1}. This is constant on $\Sigma _{\alpha _{j}}$.

\item $h\left( D_{j}^{S+,b}\right) $ is the dimension of the kernel of $%
D_{j}^{S+,b}$, restricted to sections of isotropy representation types in $%
W_{x}^{b}$, again constant on on $\Sigma _{\alpha _{j}}$.

\item $n_{b}$ is the number of different inequivalent $G_{x}$-representation
types present in each $W_{x}^{b}$, $x\in \Sigma _{\alpha _{j}}$.
\end{enumerate}
\end{theorem}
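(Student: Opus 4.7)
The plan is to compute $\mathrm{ind}^\rho(D)$ via an equivariant McKean-Singer formula and to localize its contributions using the equivariant desingularization of Section 5.3. For every $t>0$ one has
$$\mathrm{ind}^\rho(D) = \frac{1}{\dim V_\rho}\int_M \int_G \overline{\chi_\rho(g)}\,\mathrm{str}\bigl(g\cdot k_t(g^{-1}x,x)\bigr)\,dg\,|dx|,$$
where $k_t$ is the Schwartz kernel of $e^{-tD^*D}\oplus(-e^{-tDD^*})$ and the expression is independent of $t$. Choosing a partition of unity subordinate to the stratification $M = M_0^\varepsilon \sqcup \bigsqcup_j M_j^\varepsilon$ from Lemma 5.2 splits the integral into a principal part supported on $M_0^\varepsilon$ and singular parts supported on each tubular neighborhood $M_j^\varepsilon$; I would then take $t\to 0^+$ in each piece separately.

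On $M_0^\varepsilon$ the isotropy type is constant, so Theorem 4.5 and Proposition 4.6 convert $D^\rho$ into a genuinely elliptic operator $D_{M_0/G}^\rho$ on the smooth orbit space $M_0\diagup G$ acting on the finite-rank bundle $\mathcal{E}_\rho$ of sections of type $\rho$ along the orbits. After applying the equivariant desingularization so that $\widetilde{M_0}$ is a fundamental domain, and extending the quotient operator across the cut by the doubling construction of Section 5.3, the ordinary Atiyah-Singer local index theorem applies on the closed manifold $M_0^{\prime}\diagup G$ and produces the integrand $A_0^\rho(x)\,\widetilde{|dx|}$ on $G\diagdown\widetilde{M_0}$, giving the first summand.

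Near each component $\Sigma_{\alpha_j}$ of the singular set, the hypothesis lets me $G$-homotope $D$ locally to the product $D_N \ast D^{\alpha_j}$, whose normal factor $D_N = Z_j(\nabla^E_{\partial_r}+r^{-1}D_j^S)$ is a cone-type Dirac operator along the unit normal sphere bundle. Applying the fine decomposition of Definition 5.7 to $E$ restricted to $\Sigma_{\alpha_j}$ writes it as a direct sum of fine components $E^b$, each of which is adapted to the canonical isotropy $G$-bundle $W^b$ of Definition 5.12. On each piece the $\rho$-isotypical restriction of the tangential factor becomes, after desingularization of $\Sigma_{\alpha_j}$, an elliptic operator on $\Sigma_{\alpha_j}^{\prime}\diagup G$ twisted by $W^b$, with local Atiyah-Singer integrand $A_{j,b}^\rho$, while the constant-in-$t$ asymptotics of the normal heat supertrace contribute an APS-type boundary defect. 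A model cone computation identifies this defect with $\tfrac{1}{2}\bigl(-\eta(D_j^{S+,b}) + h(D_j^{S+,b})\bigr)$, where the factor $\tfrac{1}{2}$ reflects the doubling and the prefactor $1/(n_b\,\mathrm{rank}\,W^b)$ corrects for the fact that the desingularized integral overcounts the inequivalent $G_x$-representations bundled together in each $W^b$.

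The main obstacle will be making rigorous the local heat kernel analysis near each $\Sigma_{\alpha_j}$: one must show that the supertrace of the $G$-averaged heat kernel on the shrinking tubular neighborhood $T_\varepsilon(\Sigma_{\alpha_j})$ admits an expansion whose finite part as $t\to 0^+$ splits cleanly into an interior integrand on the desingularized stratum and an eta-plus-kernel boundary contribution coming from $D_j^S$. This requires separation of variables in the cone coordinate $r$, control of the spectral expansion of $D_j^S$ with respect to the fine decomposition so that each $W^b$ contributes independently, and verification that the $\rho$-isotypical projection is compatible with the local product structure $D_N \ast D^{\alpha_j}$ up to errors that vanish as $t\to 0$. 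Once these analytic points are established, summing the two types of contributions and sending $\varepsilon\to 0$ yields the displayed formula.
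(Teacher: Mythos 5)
Your proposal follows essentially the same road map the paper describes for this theorem: an equivariant McKean--Singer formula with the group integral performed before the spatial integral, the equivariant desingularization and doubling of Section 5.3 to turn each stratum quotient into a closed manifold where the ordinary local Atiyah--Singer theorem applies, and the fine decomposition and canonical isotropy bundles of Sections 5.4--5.5 to organize the normal-cone analysis near the singular strata into eta-invariant contributions. You have also correctly flagged the technical heart of the argument, namely the precise tracking of how the doubling/blow-up modifies the operator, bundle and equivariant heat supertrace (the paper's own description of \cite{BrKRi1}), rather than leaving this implicit; a partition-of-unity-and-limit framing alone would need that comparison argument to be made quantitative, since the individual strata pieces of the heat trace need not converge separately without the cone-model correction. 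One small normalization point worth checking: with the usual convention $P_{\rho}=\dim V_{\rho}\int_{G}\overline{\chi_{\rho}(g)}\,g\,dg$, the factor $\dim V_{\rho}$ from the projection cancels the $\tfrac{1}{\dim V_{\rho}}$ in the definition of $\mathrm{ind}^{\rho}$, so your displayed McKean--Singer identity carries an extra $\tfrac{1}{\dim V_{\rho}}$; this does not affect the structure of the argument but should be reconciled with the $\tfrac{1}{2\dim V_{\rho}}$ appearing in the $\beta$-terms of the theorem.
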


\vspace{1pt}As an example, we immediately apply the result to the de Rham
operator and in doing so obtain an interesting equation involving the
equivariant Euler characteristic. In what follows, let $\mathcal{L}%
_{N_{j}}\rightarrow \Sigma _{j}$ be the orientation line bundle of the
normal bundle to the singular stratum $\Sigma _{j}$. The relative Euler
characteristic is defined for $X$ a closed subset of a manifold $Y$ as $\chi
\left( Y,X,\mathcal{V}\right) =\chi \left( Y,\mathcal{V}\right) -\chi \left(
X,\mathcal{V}\right) $, which is also the alternating sum of the dimensions
of the relative de Rham cohomology groups with coefficients in a complex
vector bundle $\mathcal{V}\rightarrow Y$. If $\mathcal{V}$ is an equivariant
vector bundle, the \textbf{equivariant Euler characteristic} $\chi ^{\rho
}\left( Y,\mathcal{V}\right) $ associated to the representation $\rho
:G\rightarrow U\left( V_{\rho }\right) $ is the alternating sum%
\begin{equation*}
\chi ^{\rho }\left( Y,\mathcal{V}\right) =\sum_{j}\left( -1\right) ^{j}\dim
H^{j}\left( Y,\mathcal{V}\right) ^{\rho },
\end{equation*}
where the superscript $\rho $ refers to the restriction of these cohomology
groups to forms of $G$-representation type $\left[ \rho \right] $. An
application of the equivariant index theorem yields the following result.

\begin{theorem}
(Equivariant Euler Characteristic Theorem, in \cite{BrKRi1}) \label%
{EulerCharacteristicTheorem}Let $M$ be a compact $G$-manifold, with $G$ a
compact Lie group and principal isotropy subgroup $H_{\mathrm{pr}}$. Let $%
M_{0}$ denote the principal stratum, and let $\Sigma _{\alpha _{1}}$,...,$%
\Sigma _{\alpha _{r}}$ denote all the components of all singular strata
relative to $G$. We use the notations for $\chi ^{\rho }\left( Y,X\right) $
and $\chi ^{\rho }\left( Y\right) $ as in the discussion above. Then 
\begin{eqnarray*}
\chi ^{\rho }\left( M\right) &=&\chi ^{\rho }\left( G\diagup H_{\mathrm{pr}%
}\right) \chi \left( G\diagdown M,G\diagdown \text{singular strata}\right) \\
&&+\sum_{j}\chi ^{\rho }\left( G\diagup G_{j}\text{~},\mathcal{L}%
_{N_{j}}\right) \chi \left( G\diagdown \overline{\Sigma _{\alpha _{j}}}%
,G\diagdown \text{lower strata}\right) ,
\end{eqnarray*}%
where $\mathcal{L}_{N_{j}}$ is the orientation line bundle of normal bundle
of the stratum component $\Sigma _{\alpha _{j}}$.
\end{theorem}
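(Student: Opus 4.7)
The plan is to apply the Equivariant Index Theorem (Theorem \ref{MainTheorem}) to the de Rham operator $D=d+\delta:\Omega^{\mathrm{even}}(M)\rightarrow \Omega^{\mathrm{odd}}(M)$. Since $G$ acts by isometries it preserves the Laplacian and hence harmonic forms, and the equivariant Hodge decomposition identifies $\ker D$ and $\ker D^{\ast}$ with the $G$-modules of even and odd harmonic forms. Taking the $\rho$-isotypical part gives $\mathrm{ind}^{\rho}(D)=\chi^{\rho}(M)$, so the theorem will follow once the right-hand side of Theorem \ref{MainTheorem} is identified with the topological expression in the statement.

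First I would check the required local product form near each component $\Sigma_{\alpha_j}$ of a singular stratum. Because $G$ acts by isometries, the tubular neighborhood $T_{\varepsilon}(\Sigma_{\alpha_j})$ has a $G$-invariant metric product structure $\Sigma_{\alpha_j}\times_{G_j} D_{\varepsilon}^{\perp}$, and under the natural decomposition of forms the de Rham operator splits (up to $G$-homotopy) as $D_N \ast D^{\alpha_j}$, where $D^{\alpha_j}$ is a de Rham-type operator along the stratum twisted by the exterior algebra of the normal bundle and $D_N$ is the fiberwise de Rham operator written in polar form as $Z(\nabla_{\partial_r}+\frac{1}{r}D^{S})$, with $D^{S}$ the de Rham operator on the unit normal sphere.

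Next I would evaluate the principal stratum contribution. The bundle $\mathcal{E}_{\rho}$ of $\rho$-type sections along a principal orbit $G/H_{\mathrm{pr}}$ is finite-dimensional of rank equal to $\chi^{\rho}(G/H_{\mathrm{pr}})$ when one takes the alternating sum over form degrees, because Hodge theory on the fibers identifies the pieces of $\mathcal{E}_{\rho}$ with $\rho$-isotypical harmonic forms on $G/H_{\mathrm{pr}}$. Hence the local index density $A_{0}^{\rho}$ is the Euler density of the quotient $G\backslash M_{0}$ multiplied by $\chi^{\rho}(G/H_{\mathrm{pr}})$, and Gauss--Bonnet for the (noncompact) manifold with corners $G\backslash \widetilde{M_{0}}$ integrates this to $\chi^{\rho}(G/H_{\mathrm{pr}})\cdot \chi(G\backslash M,\ G\backslash \text{singular strata})$.

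For each singular stratum contribution I would use a symmetry argument: the de Rham operator $D^{S}$ on the odd-dimensional normal sphere anticommutes with the Hodge $\ast$ operator, so its spectrum is symmetric and $\eta(D_{j}^{S+,b})=0$ on every fine component. Only the kernel term $h(D_{j}^{S+,b})$ survives, and harmonic forms on the normal sphere detect exactly the normal orientation data, i.e.\ the orientation line bundle $\mathcal{L}_{N_{j}}$. Combining this with the Atiyah-Singer integrand $A_{j,b}^{\rho}$ on $\Sigma_{\alpha_{j}}^{\prime}\diagup G$ and applying Gauss-Bonnet once more on each quotient stratum assembles the $\Sigma_{\alpha_j}$-contribution into $\chi^{\rho}(G/G_{j},\mathcal{L}_{N_{j}})\cdot \chi(G\backslash \overline{\Sigma_{\alpha_{j}}},\ G\backslash \text{lower strata})$.

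The main obstacle will be the bookkeeping in the sum over fine components $b\in B$ of Definition \ref{FineDecompositionDefinition}: one must show that
\[
\sum_{b}\frac{h(D_{j}^{S+,b})}{n_{b}\,\mathrm{rank}\,W^{b}}\int_{G\backslash \widetilde{\Sigma_{\alpha_{j}}}} A_{j,b}^{\rho}\,\widetilde{|dx|}
\]
collapses to $\chi^{\rho}(G/G_{j},\mathcal{L}_{N_{j}})$ times the relative Euler characteristic of the stratum quotient. This requires Frobenius reciprocity to match $G$-isotypical data on orbits with $G_{j}$-isotypical data on normal fibers, and a careful count showing that the multiplicities $m_{\alpha,[\sigma]}$ and the factors $n_{b}\,\mathrm{rank}\,W^{b}$ conspire to produce exactly the alternating sum of dimensions of relative cohomology groups twisted by $\mathcal{L}_{N_{j}}$.
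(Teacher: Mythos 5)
Your strategic approach is the same as the paper's: apply the Equivariant Index Theorem (Theorem \ref{MainTheorem}) to the de Rham operator $d+\delta:\Omega^{\mathrm{even}}(M)\to\Omega^{\mathrm{odd}}(M)$, identify $\mathrm{ind}^{\rho}(d+\delta)=\chi^{\rho}(M)$ via equivariant Hodge theory, and recognize each side of the resulting index formula as the corresponding term in the statement. The paper defers the details to \cite{BrKRi1}, so your proposal is a reasonable sketch of the intended argument.

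That said, two places need more care. First, your eta-invariant argument asserts that the normal sphere $S_x\Sigma_{\alpha_j}$ is odd-dimensional and that $D^{S}$ is the de Rham operator anticommuting with the Hodge $\ast$. The normal sphere has dimension $\operatorname{codim}(\Sigma_{\alpha_j})-1$, which can be either parity, so the parity claim is unjustified. The fact that $\eta(D_j^{S+,b})=0$ for the de Rham case is true and classical (the tangential boundary operator for $d+\delta$ in the Gauss--Bonnet setting has symmetric spectrum regardless of the boundary's dimension, by an involution built from the form-degree parity and the Hodge star, not just $\ast$ alone), but you should invoke the correct involution, and you should also verify that the involution preserves each fine component $b$, i.e.\ commutes with the $G_j$-isotypical decomposition, since the eta invariant is taken componentwise.

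Second, and more significantly, you have identified but not resolved the central difficulty: the Frobenius-reciprocity bookkeeping that shows $\beta(\Sigma_{\alpha_j})$ collapses to $\chi^{\rho}(G\diagup G_j,\mathcal{L}_{N_j})\cdot\chi(G\diagdown\overline{\Sigma_{\alpha_j}},G\diagdown\text{lower strata})$. In particular, the displayed sum over fine components is missing the overall prefactor $\frac{1}{2\dim V_{\rho}}$ from Theorem \ref{MainTheorem}, and the passage from the kernel dimensions $h(D_j^{S+,b})$, the twist by the canonical isotropy bundles $W^b$, and the local supertraces $A_{j,b}^{\rho}$ to the single factor $\chi^{\rho}(G\diagup G_j,\mathcal{L}_{N_j})$ is precisely where the multiplicities $m_{\alpha,[\sigma]}$, the inequivalence numbers $n_b$, and the orientation line bundle must all cancel correctly. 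Similarly, your principal-stratum step passes from $\int_{G\diagdown\widetilde{M_0}}A_0^{\rho}$ directly to $\chi^{\rho}(G\diagup H_{\mathrm{pr}})\cdot\chi(G\diagdown M,G\diagdown\text{sing})$ by \textquotedblleft Gauss--Bonnet for the manifold with corners,\textquotedblright\ but that quotient is noncompact with corners; you must check either that the Pfaffian integral gives the open (one-point-compactified) Euler characteristic used in the statement, or that the boundary transgression terms are exactly what is absorbed by the $\beta(\Sigma_{\alpha_j})$ corrections. These are genuine gaps rather than deficiencies of exposition, and filling them is where the real work of the proof lies.
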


\begin{exercise}
Let $M=S^{n}$, let $G=O\left( n\right) $ acting on latitude spheres
(principal orbits, diffeomorphic to $S^{n-1}$). Show that there are two
strata, with the singular strata being the two poles. Show without using the
theorem by identifying the harmonic forms that%
\begin{equation*}
\chi ^{\rho }\left( S^{n}\right) =\left\{ 
\begin{array}{ll}
\left( -1\right) ^{n} & \text{if }\rho =\xi \\ 
1 & \text{if }\rho =\mathbf{1}%
\end{array}%
\right. ,
\end{equation*}%
where $\xi $ is the induced one dimensional representation of $O\left(
n\right) $ on the volume forms.
\end{exercise}

\begin{exercise}
In the previous example, show that 
\[
\chi ^{\rho }\left( G\diagup H_{\mathrm{%
pr}}\right) =\chi ^{\rho }\left( S^{n-1}\right) =\left\{ 
\begin{array}{ll}
\left( -1\right) ^{n-1} & \text{if }\rho =\xi \\ 
1 & \text{if }\rho =\mathbf{1}%
\end{array}%
\right. ,
\]
 and $\chi \left( G\diagdown M,G\diagdown \text{singular strata}%
\right) =-1$. Show that at each pole, 
\[
\chi ^{\rho }\left( G\diagup G_{j},%
\mathcal{L}_{N_{j}}\right) =\chi ^{\rho }\left( \mathrm{pt}\right) =\left\{ 
\begin{array}{ll}
1 & \text{if }\rho =\mathbf{1}\text{,} \\ 
0 & \text{otherwise.}%
\end{array}%
\right. ,
\]
 and $\chi \left( G\diagdown \overline{\Sigma _{\alpha _{j}}}%
,G\diagdown \text{lower strata}\right) =1$. Demonstrate that Theorem \ref%
{EulerCharacteristicTheorem} produces the same result as in the previous
exercise.
\end{exercise}

\begin{exercise}
If instead the group $\mathbb{Z}_{2}$ acts on $S^{n}$ by the antipodal map,
prove that%
\begin{equation*}
\chi ^{\rho }\left( S^{n}\right) =\left\{ 
\begin{array}{ll}
0 & \text{if }\rho =\mathbf{1}\text{ or }\xi \text{ and }n\text{ is odd} \\ 
1 & \text{if }\rho =\mathbf{1}\text{ or }\xi \text{ and }n\text{ is even} \\ 
0~ & \text{otherwise}%
\end{array}%
\right. ,
\end{equation*}%
both by direct calculation and by using Theorem \ref%
{EulerCharacteristicTheorem}.
\end{exercise}

\begin{exercise}
Consider the action of $\mathbb{Z}_{4}$ on the flat torus $T^{2}=\mathbb{R}%
^{2}\diagup \mathbb{Z}^{2}$, where the action is generated by a $\frac{\pi }{%
2}$ rotation. Explicitly, $k\in \mathbb{Z}_{4}$ acts on $\left( 
\begin{array}{c}
y_{1} \\ 
y_{2}%
\end{array}%
\right) $ by 
\begin{equation*}
\phi \left( k\right) \left( 
\begin{array}{c}
y_{1} \\ 
y_{2}%
\end{array}%
\right) =\left( 
\begin{array}{cc}
0 & -1 \\ 
1 & 0%
\end{array}%
\right) ^{k}\left( 
\begin{array}{c}
y_{1} \\ 
y_{2}%
\end{array}%
\right) .
\end{equation*}%
Endow $T^{2}$ with the standard flat metric. Let $\rho _{j}$ be the
irreducible character defined by $k\in \mathbb{Z}_{4}\mapsto e^{ikj\pi /2}$.
Prove that 
\begin{equation*}
\chi ^{\mathbf{1}}\left( T^{2}\right) =2,\chi ^{\rho _{1}}\left(
T^{2}\right) =\chi ^{\rho _{3}}\left( T^{2}\right) =-1,\chi ^{\rho
_{2}}\left( T^{2}\right) =0,
\end{equation*}%
in two different ways. First, compute the dimensions of the spaces of
harmonic forms to determine the equations. Second, use the Equivariant Euler
Characteristic Theorem.
\end{exercise}

\subsection{The basic index theorem for Riemannian foliations\label%
{BasicIndexSubsection}}

\vspace{1pt}Suppose that $E$ is a foliated Hermitian $\mathbb{C}\mathrm{l}%
\left( Q\right) $ module with metric basic $\mathbb{C}\mathrm{l}\left(
Q\right) $ connection $\nabla ^{E}$ over a Riemannian foliation $\left( M,%
\mathcal{F}\right) $. Let 
\begin{equation*}
D_{b}^{E}:\Gamma _{b}\left( E^{+}\right) \rightarrow \Gamma _{b}\left(
E^{-}\right)
\end{equation*}%
be the associated basic Dirac operator, as explained in Section \ref%
{basicDiracSection}.

In the formulas below, any lower order terms that preserve the basic
sections may be added without changing the index. Note that

\begin{definition}
The \emph{analytic basic index} of $D_{b}^{E}$ is 
\begin{equation*}
\mathrm{ind}_{b}\left( D_{b}^{E}\right) =\dim \ker D_{b}^{E}-\dim \ker
\left( D_{b}^{E}\right) ^{\ast }.
\end{equation*}
\end{definition}

As shown explicitly in \cite{BKRi2}, these dimensions are finite, and it is
possible to identify $\mathrm{ind}_{b}\left( D_{b}^{E}\right) $ with the
invariant index of a first order, $G$-equivariant differential operator $%
\widehat{D}$ over a vector bundle over a basic manifold $\widehat{W}$, where 
$G$ is $SO\left( q\right) $, $O\left( q\right) $, or the product of one of
these with a unitary group $U\left( k\right) $. By applying the equivariant
index theorem (Theorem \ref{MainTheorem}) to the case of the trivial
representation, we obtain the following formula for the index. In what
follows, if $U$ denotes an open subset of a stratum of $\left( M,\mathcal{F}%
\right) $, $U^{\prime }$ denotes the desingularization of $U$ very similar
to that in Section \ref{BlowupDoubleMainSection}, and $\widetilde{U}$
denotes the fundamental domain of $U$ inside $U^{\prime }$.

\begin{theorem}
(Basic Index Theorem for Riemannian foliations, in \cite{BKRi2}) \label%
{BasicIndexTheorem}Let $M_{0}$ be the principal stratum of the Riemannian
foliation $\left( M,\mathcal{F}\right) $, and let $M_{1}$, ... , $M_{r}$
denote all the components of all singular strata, corresponding to $O\left(
q\right) $-isotropy types $\left[ G_{1}\right] $, ... ,$\left[ G_{r}\right] $
on the basic manifold. With notation as in the discussion above, we have 
\begin{eqnarray*}
\mathrm{ind}_{b}\left( D_{b}^{E}\right) &=&\int_{\widetilde{M_{0}}\diagup 
\overline{\mathcal{F}}}A_{0,b}\left( x\right) ~\widetilde{\left\vert
dx\right\vert }+\sum_{j=1}^{r}\beta \left( M_{j}\right) ~ \\
\beta \left( M_{j}\right) &=&\frac{1}{2}\sum_{\tau }\frac{1}{n_{\tau }%
\mathrm{rank~}W^{\tau }}\left( -\eta \left( D_{j}^{S+,\tau }\right) +h\left(
D_{j}^{S+,\tau }\right) \right) \int_{\widetilde{M_{j}}\diagup \overline{%
\mathcal{F}}}A_{j,b}^{\tau }\left( x\right) ~\widetilde{\left\vert
dx\right\vert },
\end{eqnarray*}%
where the sum is over all components of singular strata and over all
canonical isotropy bundles $W^{\tau }$, only a finite number of which yield
nonzero $A_{j,b}^{\tau }$, and where

\begin{enumerate}
\item $A_{0,b}\left( x\right) $ is the Atiyah-Singer integrand, the local
supertrace of the ordinary heat kernel associated to the elliptic operator
induced from $\widetilde{D_{b}^{E}}$ (a desingularization of $D_{b}^{E}$) on
the quotient $\widetilde{M_{0}}\diagup \overline{\mathcal{F}}$, where the
bundle $E$ is replaced by the space of basic sections of over each leaf
closure;

\item $\eta \left( D_{j}^{S+,b}\right) $ and $h\left( D_{j}^{S+,b}\right) $
are defined in a similar way as in Theorem \ref{MainTheorem}, using a
decomposition $D_{b}^{E}=D_{N}\ast D_{M_{j}}$ at each singular stratum;

\item $A_{j,b}^{\tau }\left( x\right) $ is the local supertrace of the
ordinary heat kernel associated to the elliptic operator induced from $%
\left( \mathbf{1}\otimes D_{M_{j}}\right) ^{\prime }$ (blown-up and doubled
from $\mathbf{1}\otimes D_{M_{j}}$, the twist of $D_{M_{j}}$ by the
canonical isotropy bundle $W^{\tau }$) on the quotient $\widetilde{M_{j}}%
\diagup \overline{\mathcal{F}}$, where the bundle is replaced by the space
of basic sections over each leaf closure; and

\item $n_{\tau }$ is the number of different inequivalent $G_{j}$%
-representation types present in a typical fiber of $W^{\tau }$.
\end{enumerate}
\end{theorem}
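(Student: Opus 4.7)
The plan is to reduce the basic index problem to the equivariant index problem on an auxiliary $G$-manifold (the basic manifold) and then invoke Theorem \ref{MainTheorem} with $\rho=\mathbf{1}$.

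First I would carry out the Molino-type desingularization of the transverse structure: lift $\left(M,\mathcal{F}\right)$ to the transverse orthonormal frame bundle $\widehat{M}\to M$ associated to the normal bundle $Q$. The lifted foliation $\widehat{\mathcal{F}}$ is transversally parallelizable, and the space of leaf closures $\widehat{W}=\widehat{M}\diagup\overline{\widehat{\mathcal{F}}}$ is a smooth closed manifold carrying a natural action of the structure group $G$ (a subgroup of $O(q)$, possibly crossed with a unitary group $U(k)$ to account for a twisting $\mathbb{C}\mathrm{l}(Q)$-module structure). The foliated Clifford bundle $E\to M$ pulls back to $\widehat{M}$ and then descends to a $G$-equivariant Hermitian bundle $\widehat{E}\to \widehat{W}$, essentially because basic sections are exactly those invariant under the infinitesimal holonomy.

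Next I would identify the basic Dirac operator with a $G$-equivariant operator. Because the connection on $E$ is basic and the leafwise piece of $D_{b}^{E}$ has been projected away, $D_{b}^{E}$ descends to a $G$-equivariant, transversally elliptic, first-order operator $\widehat{D}:\Gamma(\widehat{W},\widehat{E}^{+})\to\Gamma(\widehat{W},\widehat{E}^{-})$. The identification
\[
\Gamma_{b}(E)\;\cong\;\Gamma(\widehat{W},\widehat{E})^{G}
\]
is an $L^{2}$-isometry (this is the foliated analogue of Theorem \ref{IsomorphismsOfSectionsTheorem}), so that
\[
\mathrm{ind}_{b}(D_{b}^{E})=\mathrm{ind}^{\mathbf{1}}(\widehat{D}).
\]
The stratification of $\left(M,\mathcal{F}\right)$ by infinitesimal holonomy type corresponds, under this construction, to the stratification of $\widehat{W}$ by $G$-isotropy type: the principal stratum $M_{0}$ corresponds to the principal $G$-stratum of $\widehat{W}$, and the singular components $M_{1},\ldots,M_{r}$ correspond to the components of singular strata $\Sigma_{\alpha_{1}},\ldots,\Sigma_{\alpha_{r}}$ relative to $G$.

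Then I would apply Theorem \ref{MainTheorem} to $\widehat{D}$ with $\rho=\mathbf{1}$. The principal stratum term produces, after passing back to $M\diagup\overline{\mathcal{F}}$ via the Riemannian submersion $\widehat{M}_{0}\diagup G\to \widetilde{M_{0}}\diagup\overline{\mathcal{F}}$, exactly the Atiyah-Singer integrand $A_{0,b}(x)$; the model operator near each singular component is the product $D_{N}\ast D_{M_{j}}$ prescribed in the statement, and the resulting eta-type contributions become the stratum terms $\beta(M_{j})$. One must check that $\dim V_{\rho}=1$ for the trivial representation (eliminating the prefactor in Theorem \ref{MainTheorem}), and that the sum over fine components indexed by $b$ in Theorem \ref{MainTheorem} becomes the sum over canonical isotropy bundles $W^{\tau}$ here. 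Finiteness of the nonzero $A_{j,b}^{\tau}$ follows because only finitely many isotypical components contribute to the trivial-type part of $\widehat{E}$ after twisting.

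The main obstacle will be the bookkeeping of the desingularization and the correspondence between the foliation stratification and the $G$-stratification of $\widehat{W}$, in particular verifying that the product structure $D_{N}\ast D_{M_{j}}$ of the basic Dirac operator near a leaf-closure stratum lifts to a matching product structure of $\widehat{D}$ near the corresponding $G$-stratum, so that the eta invariants $\eta(D_{j}^{S+,\tau})$ and kernel dimensions $h(D_{j}^{S+,\tau})$ computed downstairs agree with those appearing in Theorem \ref{MainTheorem} upstairs. A secondary technical point is to show that the twisting by $W^{\tau}$ in the foliation formulation is the correct pushdown of twisting by the canonical isotropy $G$-bundle of Definition \ref{canonicalIsotropyBundleDefinition}; Lemma \ref{canIsotropyGbundleAdaptedExists} combined with the adapted-bundle formalism is what makes this identification canonical.
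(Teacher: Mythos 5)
Your proposal matches the paper's approach: the paper likewise identifies $\mathrm{ind}_{b}(D_{b}^{E})$ with the invariant index $\mathrm{ind}^{\mathbf{1}}(\widehat{D})$ of a $G$-equivariant, transversally elliptic operator on the Molino basic manifold $\widehat{W}$ (with $G$ being $SO(q)$, $O(q)$, or one of these crossed with $U(k)$), and then applies Theorem \ref{MainTheorem} with $\rho=\mathbf{1}$, matching strata and the product decomposition $D_{N}\ast D_{M_{j}}$ under the correspondence between leaf-closure strata and $G$-isotropy strata. The bookkeeping points you flag as the main obstacles are exactly the technical content deferred to \cite{BKRi2}.
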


An example of this result is the generalization of the Gauss-Bonnet Theorem
to the basic Euler characteristic. Recall from Section \ref%
{basicDiracSection} that the basic forms $\Omega \left( M,\mathcal{F}\right) 
$ are preserved by the exterior derivative, and the resulting cohomology is
called basic cohomology $H^{\ast }\left( M,\mathcal{F}\right) $. The basic
cohomology groups are finite-dimensional in the Riemannian foliation, and
the basic Euler characteristic is defined to be 
\begin{equation*}
\chi \left( M,\mathcal{F}\right) =\sum \left( -1\right) ^{j}\dim H^{j}\left(
M,\mathcal{F}\right) .
\end{equation*}

We have two independent proofs of the following Basic Gauss-Bonnet Theorem;
one proof uses the result in \cite{BePaRi}, and the other proof is a direct
consequence of the basic index theorem stated above (proved in \cite{BKRi2}%
). We express the basic Euler characteristic in terms of the ordinary Euler
characteristic, which in turn can be expressed in terms of an integral of
curvature. We extend the Euler characteristic notation $\chi \left( Y\right) 
$ for $Y$ any open (noncompact without boundary) or closed (compact without
boundary) manifold to mean%
\begin{equation*}
\chi \left( Y\right) =%
\begin{array}{ll}
\chi \left( Y\right) & \text{if }Y\text{ is closed} \\ 
\chi \left( 1\text{-point compactification of }Y\right) -1~ & \text{if }Y%
\text{ is open}%
\end{array}%
\end{equation*}%
Also, if $\mathcal{L}$ is a foliated line bundle over a Riemannian foliation 
$\left( X,\mathcal{F}\right) $, we define the basic Euler characteristic $%
\chi \left( X,\mathcal{F},\mathcal{L}\right) $ as before, using the basic
cohomology groups with coefficients in the line bundle $\mathcal{L}$.

\begin{theorem}
(Basic Gauss-Bonnet Theorem, announced in \cite{RiLodz}, proved in \cite%
{BKRi2}) \label{BasicGaussBonnet}Let $\left( M,\mathcal{F}\right) $ be a
Riemannian foliation. Let $M_{0}$,..., $M_{r}$ be the strata of the
Riemannian foliation $\left( M,\mathcal{F}\right) $, and let $\mathcal{O}%
_{M_{j}\diagup \overline{\mathcal{F}}}$ denote the orientation line bundle
of the normal bundle to $\overline{\mathcal{F}}$ in $M_{j}$. Let $L_{j}$
denote a representative leaf closure in $M_{j}$. With notation as above, the
basic Euler characteristic satisfies 
\begin{equation*}
\chi \left( M,\mathcal{F}\right) =\sum_{j}\chi \left( M_{j}\diagup \overline{%
\mathcal{F}}\right) \chi \left( L_{j},\mathcal{F},\mathcal{O}_{M_{j}\diagup 
\overline{\mathcal{F}}}\right) .
\end{equation*}
\end{theorem}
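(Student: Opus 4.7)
The plan is to realize $\chi(M,\mathcal{F})$ as the analytic basic index of the basic de Rham operator $D_b = \widetilde{d}+\widetilde{\delta}:\Omega^{\mathrm{even}}(M,\mathcal{F})\to\Omega^{\mathrm{odd}}(M,\mathcal{F})$ and then apply Theorem \ref{BasicIndexTheorem}. The identification with $\chi(M,\mathcal{F})$ comes from exactly the homotopy argument appearing in the proof of Corollary \ref{OddCodimZeroEuler}: the family $D_t = d+\delta_b-\tfrac{t}{2}\kappa_b\lrcorner-\tfrac{t}{2}\kappa_b\wedge$ is a transversally elliptic deformation preserving basic sections from the ordinary basic Euler operator $D_0$ (whose Fredholm index is $\chi(M,\mathcal{F})$) to $D_1=D_b$, so the basic index of $D_b$ equals $\chi(M,\mathcal{F})$. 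This reduces the theorem to computing the right-hand side of Theorem \ref{BasicIndexTheorem} for this particular choice of $D_b^E$.

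Next, I would analyze the two types of contributions separately. For the principal stratum integrand $A_{0,b}$, the desingularized operator $\widetilde{D_b^E}$ descends to a twisted de Rham operator on the closed quotient $\widetilde{M_0}/\overline{\mathcal{F}}$, where the twist is by the finite-rank bundle whose fiber at a point is the space of basic sections along the corresponding leaf closure. The Atiyah--Singer integrand for a twisted de Rham operator is the Euler form of the base wedged with a supertrace along the twisting bundle; integrating gives the product of $\chi(M_0/\overline{\mathcal{F}})$ with the fiberwise basic Euler characteristic $\chi(L_0,\mathcal{F})$, which is the $j=0$ term (the orientation bundle over the principal stratum is trivial). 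For each singular stratum $M_j$, the analogous local computation on $\widetilde{M_j}/\overline{\mathcal{F}}$ yields $\chi(M_j/\overline{\mathcal{F}})$ times a fiberwise contribution along $L_j$.

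The delicate step is to show that the combined coefficient $\tfrac{1}{2}\sum_\tau\tfrac{1}{n_\tau\,\mathrm{rank}\,W^\tau}(-\eta(D_j^{S+,\tau})+h(D_j^{S+,\tau}))$ times the fiberwise basic Euler integrand recovers exactly $\chi(L_j,\mathcal{F},\mathcal{O}_{M_j/\overline{\mathcal{F}}})$. The key simplification is that the de Rham operator on the round normal sphere carries a chirality/reflection symmetry anticommuting with $D^{S+}$, which pairs $\lambda$-eigenspaces with $(-\lambda)$-eigenspaces and therefore forces $\eta(D_j^{S+,\tau})=0$ on every canonical isotropy component $W^\tau$. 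The remaining kernel dimensions $h(D_j^{S+,\tau})$, summed over $\tau$ with the stated weights, compute the fiberwise equivariant Euler characteristic of the unit normal sphere, and the orientation line bundle $\mathcal{O}_{M_j/\overline{\mathcal{F}}}$ emerges as the determinant of the $\pm 1$-eigenspaces of Clifford multiplication by the outward radial direction.

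The main obstacle is precisely this last bookkeeping: verifying that the decomposition of the normal operator into canonical isotropy pieces, weighted by $1/(n_\tau\,\mathrm{rank}\,W^\tau)$, reassembles into the relative (orientation-twisted) basic Euler characteristic of the representative leaf closure $L_j$, rather than producing an additional combinatorial correction. I would handle this by pulling the calculation back to the $G$-equivariant setting of Theorem \ref{EulerCharacteristicTheorem} via the basic manifold construction (where $G=O(q)$ acts on the basic manifold $\widehat{W}$ and basic sections correspond to $G$-invariant sections), so that the basic Gauss--Bonnet formula becomes the $\rho=\mathbf{1}$ case of the Equivariant Euler Characteristic Theorem transported back to $(M,\mathcal{F})$ through the correspondence between $G$-strata of $\widehat{W}$ and foliation strata $M_j$.
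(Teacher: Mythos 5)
Your outline follows the paper's stated route: realize $\chi\left(M,\mathcal{F}\right)$ as the basic index of $D_{b}$ via the homotopy $D_{t}$, apply the Basic Index Theorem (Theorem \ref{BasicIndexTheorem}), and carry the bookkeeping over to the $\rho=\mathbf{1}$ case of the Equivariant Euler Characteristic Theorem (Theorem \ref{EulerCharacteristicTheorem}) on the basic manifold, where the vanishing of the eta invariants for the tangential de Rham operator and the appearance of the orientation line bundle from the normal geometry are precisely the simplifications that collapse the general index formula to the multiplicative Gauss--Bonnet form. The paper defers all details to \cite{BKRi2} and states only that the theorem is a direct consequence of Theorem \ref{BasicIndexTheorem}, so your sketch captures the intended argument.
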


\begin{remark}
In \cite[Corollary 1]{GLott}, they show that in special cases the only term
that appears is one corresponding to a most singular stratum.
\end{remark}

\vspace{1pt}We now investigate some examples through exercises. The first
example is a codimension $2$ foliation on a 3-manifold. Here, $O(2)$ acts on
the basic manifold, which is homeomorphic to a sphere. In this case, the
principal orbits have isotropy type $\left( \{e\}\right) $, and the two
fixed points obviously have isotropy type $\left( O(2)\right) $. In this
example, the isotropy types correspond precisely to the infinitesimal
holonomy groups.

\begin{exercise}
\label{rotation} (From \cite{Ri1}, \cite{Ri2}, and \cite{BKRi2}) Consider
the one dimensional foliation obtained by suspending an irrational rotation
on the standard unit sphere $S^{2}$. \ On $S^{2}$ we use the cylindrical
coordinates $\left( z,\theta \right) $, related to the standard rectangular
coordinates by $x^{\prime }=\sqrt{\left( 1-z^{2}\right) }\cos \theta $, $%
y^{\prime }=\sqrt{\left( 1-z^{2}\right) }\sin \theta $, $z^{\prime }=z$. \
Let $\alpha $ be an irrational multiple of $2\pi $, and let the
three--manifold $M=S^{2}\times \left[ 0,1\right] /\sim $, where $\left(
z,\theta ,0\right) \sim \left( z,\theta +\alpha ,1\right) $. \ Endow $M$
with the product metric on $T_{z,\theta ,t}M\cong T_{z,\theta }S^{2}\times
T_{t}\mathbb{R}$. \ Let the foliation $\mathcal{F}$ be defined by the
immersed submanifolds $L_{z,\theta }=\cup _{n\in \mathbb{Z}}\left\{
z\right\} \times \left\{ \theta +\alpha \right\} \times \left[ 0,1\right] $
(not unique in $\theta $). \ The leaf closures $\overline{L}_{z}$ for $|z|<1$
are two dimensional, and the closures corresponding to the poles ($z=\pm 1$)
are one dimensional. Show that $\chi \left( M,\mathcal{F}\right) =2$, using
a direct calculation of the basic cohomology groups and also by using the
Basic Gauss-Bonnet Theorem.
\end{exercise}

The next example is a codimension $3$ Riemannian foliation for which all of
the infinitesimal holonomy groups are trivial; moreover, the leaves are all
simply connected. There are leaf closures of codimension 2 and codimension
1. The codimension 1 leaf closures correspond to isotropy type $(e)$ on the
basic manifold, and the codimension 2 leaf closures correspond to an
isotropy type $(O(2))$ on the basic manifold. In some sense, the isotropy
type measures the holonomy of the leaf closure in this case.

\begin{exercise}
(From \cite{BKRi2}) This foliation is a suspension of an irrational rotation
of $S^{1}$ composed with an irrational rotation of $S^{2}$ on the manifold $%
S^{1}\times S^{2}$. As in Example~\ref{rotation}, on $S^{2}$ we use the
cylindrical coordinates $\left( z,\theta \right) $, related to the standard
rectangular coordinates by $x^{\prime }=\sqrt{\left( 1-z^{2}\right) }\cos
\theta $, $y^{\prime }=\sqrt{\left( 1-z^{2}\right) }\sin \theta $, $%
z^{\prime }=z$. \ Let $\alpha $ be an irrational multiple of $2\pi $, and
let $\beta $ be any irrational number. We consider the four--manifold $%
M=S^{2}\times \left[ 0,1\right] \times \left[ 0,1\right] /\sim $, where $%
\left( z,\theta ,0,t\right) \sim \left( z,\theta ,1,t\right) $, $\left(
z,\theta ,s,0\right) \sim \left( z,\theta +\alpha ,s+\beta \mod1,1\right) $.
Endow $M$ with the product metric on $T_{z,\theta ,s,t}M\cong T_{z,\theta
}S^{2}\times T_{s}\mathbb{R}\times T_{t}\mathbb{R}$. Let the foliation $%
\mathcal{F}$ be defined by the immersed submanifolds $L_{z,\theta ,s}=\cup
_{n\in \mathbb{Z}}\left\{ z\right\} \times \left\{ \theta +\alpha \right\}
\times \left\{ s+\beta \right\} \times \left[ 0,1\right] $ (not unique in $%
\theta $ or $s$). The leaf closures $\overline{L}_{z}$ for $|z|<1$ are
three--dimensional, and the closures corresponding to the poles ($z=\pm 1$)
are two--dimensional. By computing the basic forms of all degrees, verify
that the basic Euler characteristic is zero. Next, use the Basic
Gauss-Bonnet Theorem to see the same result.
\end{exercise}

The following example is a codimension two transversally oriented Riemannian
foliation in which all the leaf closures have codimension one. The leaf
closure foliation is not transversally orientable, and the basic manifold is
a flat Klein bottle with an $O(2)$--action. The two leaf closures with $%
\mathbb{Z}_{2}$ holonomy correspond to the two orbits of type $\left( 
\mathbb{Z}_{2}\right) $, and the other orbits have trivial isotropy.

\begin{exercise}
This foliation is the suspension of an irrational rotation of the flat torus
and a $\mathbb{Z}_{2}$--action. Let $X$ be any closed Riemannian manifold
such that $\pi _{1}(X)=\mathbb{Z}\ast \mathbb{Z}$~, the free group on two
generators $\{\alpha ,\beta \}$. We normalize the volume of $X$ to be 1. Let 
$\widetilde{X}$ be the universal cover. We define $M=\widetilde{X}\times
S^{1}\times S^{1}\diagup \pi _{1}(X)$, where $\pi _{1}(X)$ acts by deck
transformations on $\widetilde{X}$ and by $\alpha \left( \theta ,\phi
\right) =\left( 2\pi -\theta ,2\pi -\phi \right) $ and $\beta \left( \theta
,\phi \right) =\left( \theta ,\phi +\sqrt{2}\pi \right) $ on $S^{1}\times
S^{1}$. We use the standard product--type metric. The leaves of $\mathcal{F}$
are defined to be sets of the form $\left\{ (x,\theta ,\phi )_{\sim
}\,|\,x\in \widetilde{X}\right\} $. Note that the foliation is transversally
oriented. Show that the basic Euler characteristic is $2$, in two different
ways.
\end{exercise}

The following example (from \cite{Car}) is a codimension two Riemannian
foliation that is not taut.

\begin{exercise}
For the example in Exercise \ref{CarExampleExercise}, show that the basic
manifold is a torus, and the isotropy groups are all trivial. Verify that $%
\chi \left( M,\mathcal{F}\right) =0$ in two different ways.
\end{exercise}

\end{document}